\DeclareSymbolFont{cyrillic}{T2A}{cmr}{m}{n}
\DeclareMathSymbol{\Sha}{\mathalpha}{cyrillic}{216}
\numberwithin{equation}{section}
\newtheorem{dummy}{dummy}[section]
\newtheorem{definition}[dummy]{Definition}
\newtheorem{theorem}[dummy]{Theorem}
\newtheorem{corollary}[dummy]{Corollary}
\newtheorem{lemma}[dummy]{Lemma}
\newtheorem{proposition}[dummy]{Proposition}
\newtheorem{remark}[dummy]{Remark}
\newtheorem{example}[dummy]{Example}
\newtheorem{question}[dummy]{Question}
\newcommand{\Z}{\ensuremath{\mathbb{Z}}}
\newcommand{\F}{\ensuremath{\mathbb{F}}}
\newcommand{\Q}{\ensuremath{\mathbb{Q}}}
\newcommand{\CC}{\ensuremath{\mathbb{C}}}
\newcommand{\PP}{\ensuremath{\mathbb{P}}}
\newcommand{\DD}{\ensuremath{\mathcal{D}}}
\newcommand{\sheaf}[1]{\ensuremath{\mathcal{#1}}}
\def\Db{\DD^{b}}
\def\NS{\operatorname{NS}}
\def\Pic{\operatorname{Pic}}
\def\Aut{\operatorname{Aut}}
\def\Br{\operatorname{Br}}
\def\Image{\operatorname{Im}}
\def\Ker{\operatorname{Ker}}
\def\rk{\operatorname{rk}}
\def\ind{\operatorname{ind}}
\def\ord{\operatorname{ord}}
\def\DE{\operatorname{DE}}
\def\HE{\operatorname{HE}}
\def\Lagr{\operatorname{L}}
\def\tLagr{\operatorname{\widetilde{L}}}
\def\Jac{\operatorname{J}}
\def\II{\operatorname{I}}
\def\tII{\widetilde{\operatorname{I}}}
\def\EllK3{{\operatorname{EllK3}}}
\def\BrK3{{\operatorname{BrK3}}}
\def\ol{\overline}
\def\wt{\widetilde}
\title{Derived equivalence for elliptic K3 surfaces and Jacobians}
\author[R.~Meinsma]{Reinder Meinsma}
\address{
School of Mathematics and Statistics, University of Sheffield,
Hounsfield Road, S3 7RH, UK, and
Hausdorff Center for Mathematics
at the University of Bonn, Endenicher Allee 60, 53115.
}
\email{r.r.meinsma@gmail.com}
\author[E.~Shinder]{Evgeny Shinder}
\address{
School of Mathematics and Statistics, University of Sheffield,
Hounsfield Road, S3 7RH, UK, and
Hausdorff Center for Mathematics
at the University of Bonn, Endenicher Allee 60, 53115.
}
\email{eugene.shinder@gmail.com}
\thanks{E.S. was partially supported by the EPSRC grant
    EP/T019379/1 ``Derived categories and algebraic K-theory of singularities'', and by the
    ERC Synergy grant ``Modern Aspects of Geometry: Categories, Cycles and Cohomology of Hyperkähler Varieties".}
\begin{document}
\begin{abstract}
We present a 
detailed study of elliptic fibrations on Fourier-Mukai partners of K3 surfaces, which we call derived elliptic structures. 
We fully classify derived elliptic structures
    in terms of Hodge-theoretic data, similar to the Derived Torelli Theorem that describes Fourier-Mukai partners. 
    In Picard rank two, derived elliptic structures are fully determined by the Lagrangian subgroups of the discriminant group.
    As a consequence, we prove that
    for a large class of Picard rank 2 elliptic K3 surfaces all Fourier-Mukai partners are Jacobians,
    and we partially extend this result to non-closed fields.
    We also show that there exist elliptic K3 surfaces with Fourier-Mukai partners which are not Jacobians of the original K3 surface. This gives a negative answer to a question raised by Hassett and Tschinkel.
\end{abstract}
\maketitle
 \tableofcontents

\section{Introduction}

Study of derived equivalence for complex
K3 surfaces
goes back to the work of Mukai.
By the Derived Torelli Theorem \cite{Muk87, Orl03},
derived equivalence translates
to a Hodge-theoretic
concept. 
Building on the Derived Torelli
theorem, 
and Nikulin's work on lattices \cite{Nik80},
one can deduce a formula for the number
of Fourier-Mukai partners for a complex K3 surface
\cite{Ogu01, HLOY02}.

Derived equivalences
of elliptic K3 surfaces have been studied in
\cite{Ste02, Gee05}.
One way to produce Fourier-Mukai partners
of an elliptic surface $f: X \to \PP^1$, is to take Jacobians $\Jac^k(X)$, which
are moduli spaces parametrising stable
torsion sheaves supported on a fibre of $f$
and having degree $k \in \Z$.
If $k$ is coprime to the multisection index of $f$,
then $\Jac^k(X)$ is derived equivalent to $X$ and we refer to $\Jac^k(X)$ as a \textit{coprime Jacobian} of $X$.
This raises the question of whether the converse is also true:

\begin{question}
\label{q:HT}
Is every Fourier-Mukai partner of an elliptic surface
$X$ a coprime Jacobian of $X$?
\end{question}

Question \ref{q:HT} was asked in 2014 by Hassett and Tschinkel in the case $X$ is a K3 surface \cite[Question 20]{HT14}. 
In fact, since elliptic K3 surfaces
can have several non-isomorphic elliptic fibrations,
one can interpret this question differently
depending on whether we fix a fibration on $X$ 
in advance or not.

For elliptic surfaces of non-zero Kodaira dimension, as well as for bielliptic and Enriques surfaces,
\cite{BM01, BM19}, Question \ref{q:HT}
has an affirmative answer.
We do not know the answer in the abelian case.

One of our main results is the following
answer to Question \ref{q:HT} for 
K3 surfaces:

\begin{theorem}[See Corollaries \ref{cor:HT-DJ} and \ref{cor:DE-count}]\label{thm:main-intro}
Let $X$ be an elliptic K3 surface of Picard rank 2. Let $t$ be the multisection index of $X$ and let $2d$ be the degree of a polarisation on $X$. Denote $m=\operatorname{gcd}(d,t)$. 

\begin{enumerate}
    \item[(i)] If $m=1$, then 
    every Fourier-Mukai partner of $X$
    is isomorphic to a coprime Jacobian
    of a fixed elliptic fibration on $X$;
    
    \item[(ii)] If $m=p^k$, for a prime $p$, then every Fourier-Mukai
    partner of $X$ is isomorphic
    to a coprime Jacobian
    of one of the two elliptic fibrations on $X$;
    
    \item[(iii)] If $m$ is not a power of a prime, and $X$ is very general with these properties, then $X$ admits Fourier-Mukai partners which are not isomorphic to any Jacobian of any elliptic fibration on $X$.
\end{enumerate}
\end{theorem}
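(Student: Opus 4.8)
The plan is to deduce the theorem from the classification of derived elliptic structures established above, which in Picard rank $2$ identifies them with the Lagrangian subgroups of the discriminant group $D = \NS_X^{\vee}/\NS_X$, and then to carry out a careful but elementary prime-by-prime analysis of these Lagrangians. First I would fix coordinates. Since $\rk \NS_X = 2$ and $X$ is elliptic of multisection index $t$ with a polarisation of degree $2d$, the lattice $\NS_X$ has Gram matrix $\left(\begin{smallmatrix} 0 & t \\ t & 2d\end{smallmatrix}\right)$ in a basis $(F,H)$, so $\disc \NS_X = -t^2$ and $|D| = t^2$. Solving the isotropy equation $y(tx+dy)=0$ shows that $\NS_X$ has exactly two primitive isotropic classes up to sign, namely $F_1 = F$ and $F_2 = (d/m)F - (t/m)H$ with $m = \gcd(d,t)$; these are the fibre classes of the (at most) two elliptic fibrations on $X$, and one checks that both have multisection index $t$. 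Writing $v_i = F_i/t \in \NS_X^{\vee}$, each $v_i$ is an isotropic class of order $t$ in $D$, so $L_i = \langle v_i\rangle$ is a Lagrangian of order $t=\sqrt{|D|}$. The numerical input that drives everything is the linking value $\langle v_1,v_2\rangle \equiv -1/m \pmod{\Z}$, an element of order exactly $m$: thus $m$ measures how far apart the two fibration-Lagrangians sit, independently of parity subtleties in the full structure of $D$.

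Second, I would match the Jacobian construction with these Lagrangians. A coprime Jacobian $\Jac^k(X,F_i)$ carries an elliptic fibration of the same fibre class $F_i$, and the derived equivalence acts on $D$ by a twist depending on $k$. Decomposing $D=\bigoplus_{p\mid t} D_p$ orthogonally over the primes, a Lagrangian splits as $L=\bigoplus_p L_p$. At a prime $p\nmid m$ the two local Lagrangians coincide, $L_{1,p}=L_{2,p}$, so there is a unique local type; at a prime $p\mid m$ one has $L_{1,p}\neq L_{2,p}$. The goal of this step is to show that the coprime Jacobians of $F_i$ realise exactly the Lagrangians whose local type equals $i$ at every prime dividing $m$ (the \emph{pure} Lagrangians of type $i$), while the remaining \emph{mixed} Lagrangians are realised by no Jacobian of either fibration. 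The needed lemma is that the Jacobian twists together with the image $\bar O$ of $O(\NS_X)$ in $O(D)$ act transitively \emph{within} each local type but preserve the collection of types up to the single global swap $1\leftrightarrow 2$ induced by the symmetry exchanging $F_1$ and $F_2$.

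Third, the trichotomy follows by counting prime factors of $m$. If $m=1$ no prime carries a choice, the pure/mixed distinction is vacuous, and every Lagrangian is a coprime Jacobian of the single fibration $F_1$; this is (i). If $m=p^k$, only the prime $p$ carries a choice, so every Lagrangian is pure of type $1$ or $2$ and hence a coprime Jacobian of $F_1$ or $F_2$; this is (ii), and the essential content here is the single-prime transitivity asserted above, which guarantees that no ``exotic'' local Lagrangian at $p$ escapes the Jacobian orbits. If $m$ has at least two distinct prime factors, I construct a mixed Lagrangian $L$ by prescribing local type $1$ at some but not all primes dividing $m$ (with the forced type elsewhere); by Step two this $L$ lies in no Jacobian orbit, which is (iii).

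The main obstacle is part (iii): one must upgrade the mixed Lagrangian to an actual Fourier–Mukai partner $Y$ and prove that $Y$ is isomorphic to no Jacobian of any fibration of $X$. Because there are exactly two fibrations, it suffices to show that the mixed local-type profile of $L$ cannot be converted into a pure one. For this I would invoke the \emph{very general} hypothesis: for $X$ very general among K3 surfaces with $\NS_X$ of the given rank-$2$ type, the Hodge isometries of the transcendental lattice reduce to $\pm 1$, so that Fourier–Mukai partners correspond bijectively to Lagrangians modulo $\bar O$ and no accidental isomorphisms occur. The crux is then the claim that $\bar O$, together with the Jacobian twists, preserves the local-type profile up to the global swap $1\leftrightarrow 2$, and in particular can never send a mixed profile to a pure one. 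I expect the verification of this claim — an explicit determination of the image of $O(\NS_X)\to O(D)$ for the lattice $\left(\begin{smallmatrix} 0 & t \\ t & 2d\end{smallmatrix}\right)$, showing it is generated by $\pm\mathrm{id}$ and the fibration swap, together with the fact that the Jacobian twists act by global multiplications — to be the most delicate point of the argument. Granting it, the mixed Lagrangian produces a partner $Y$ that is provably not a Jacobian, yielding the negative answer to Question \ref{q:HT}.
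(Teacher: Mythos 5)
Your proposal follows essentially the same route as the paper: it rests on the classification of derived elliptic structures by Lagrangian subgroups of the discriminant group (Theorem \ref{thm:derivedelliptic-lagrangianelts}), the orthogonal primary decomposition \eqref{eq:A-primary}, the fact that each prime dividing $m$ contributes exactly two local Lagrangians and each prime not dividing $m$ contributes one (Lemmas \ref{lem:twolagrangians-equal} and \ref{lem:lagrangians-prime}, giving the count $2^{\omega(m)}$ of Proposition \ref{prop:lagrangiancount}), and the observation that the two fibrations realise precisely the two ``pure'' profiles while a mixed profile for $\omega(m)\geq 2$ yields a non-Jacobian partner. The key steps you defer --- ruling out exotic local Lagrangians, realising an abstract Lagrangian by an actual surface via the period map, and verifying that the identifications preserve the local profile up to the global swap --- are exactly the paper's Lemma \ref{lem:lagrangians-prime}, the surjectivity part of Theorem \ref{thm:derivedelliptic-lagrangianelts}, and the $\iota$-analysis of Corollaries \ref{cor:JFM}--\ref{cor:DE-count}; the only imprecision is that the relevant group of identifications is $G_X$ (which is $\{\pm 1\}$ in the very general case) together with the fibration-swapping involution $\iota$, rather than the image of $O(\NS(X))$ in $O(A_{\NS(X)})$ (the swap of $F$ and $F'$ need not lie in $O(\NS(X))$), though this does not affect the conclusion.
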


Our method of proof of Theorem \ref{thm:main-intro}
relies on the Ogg-Shafarevich theory for elliptic surfaces, the Derived Torelli Theorem and lattice theory.
In addition we introduce a new ingredient: a \emph{derived elliptic structure}. 
The notion of the derived elliptic structure 
goes into the direction of describing
an elliptic structure on $X$ and its Fourier-Mukai partner
in terms
of the derived category $\Db(X)$.
We define a derived elliptic structure on a 
K3 surface $X$ as a choice
of an elliptic fibration on a Fourier-Mukai partner of $X$.
Using this language, Question \ref{q:HT} translates
to the question whether every derived elliptic structure on $X$
is isomorphic to a coprime
Jacobian of an actual elliptic structure on $X$.

We proceed to completely classify derived elliptic structures, for an
elliptic K3 surface $X$ of Picard rank two, in terms of certain \emph{Lagrangian subgroups} of the discriminant lattice $A_{\NS(X)}$
of the Neron-Severi lattice of $X$.
The final answer, at least when $X$ is very general, is that the number of derived elliptic structures on $X$, up to coprime Jacobians, 
equals 
$2^{\omega(m)}$
where $m$ is as in Theorem \ref{thm:main-intro}
and $\omega(m)$ is the number of distinct prime factors of $m$,
that is $\omega(1) = 0$,
$\omega(p^k) = 1$ and $\omega(m) > 1$ otherwise.
This explains
the condition on $m$
appearing in 
Theorem \ref{thm:main-intro}.

Let us
explain some difficulties that we encounter along the way. First of all, elliptic K3 surfaces of Picard rank two can have one or two elliptic fibrations, and in the latter case these elliptic fibrations are sometimes isomorphic. Thus, a direct comparison between the number of coprime Jacobians and Fourier-Mukai partners is complicated.

Secondly, many results that we state
for arbitrary elliptic K3 surfaces $X$ of Picard rank two simplify considerably when $X$ is very general. 
Indeed in this case, the group
$G_X$ of Hodge isometries of the transcendental lattice
$T(X)$ is trivial, that is $G_X = \{\pm 1\}$.
In general this is a finite cyclic group of even
order $|G_X| \le 66$.
This group appears in various bijections we produce, similarly
to how it appears in the counting formula of Fourier--Mukai partners
\cite{HLOY02}.
The set of isomorphism classes of
derived elliptic structures on $X$ is in natural bijection with the set
\[
\tLagr(A_{T(X)}) / G_X,
\]
see Theorem \ref{thm:derivedelliptic-lagrangianelts}.
Here $A_{T(X)}$ is the discriminant lattice of the transcendental lattice
$T(X)$,
and $\tLagr(A_{T(X)})$
denotes the set of Lagrangian elements (Definition \ref{def:lagrangianldt}).
Taking a coprime Jacobian $\Jac^k$ of an elliptic structure
translates into 
multiplying the corresponding Lagrangian element by $k$ and changing elliptic fibrations on a given surface corresponds to an 
involution which can be described intrinsically in terms of $A_{T(X)}$.
For very general $X$, $G_X = \{\pm 1\}$,
and this group acts by multiplying Lagrangian
elements by $-1$.
On the other hand, special $X$ will have fewer Fourier-Mukai partners and fewer coprime Jacobians, however they
will still match perfectly in cases
(i) and (ii) of Theorem \ref{thm:main-intro}.
See Example \ref{ex:thechosenone} 
for the
 most special
 (in terms of the size
 of $G_X$ and $\Aut(X)$)
elliptic K3 surface.

Similarly, when considering very general 
elliptic K3 surfaces, every isomorphism preserving the fibre
class is necessarily an isomorphism over the base.
This is false in general, and this is important, because
the Ogg-Shafarevich theory works with elliptic surfaces
over the base, whereas the natural equivalence relation
is that of preserving the elliptic pencil. We provide a careful analysis of the difference between isomorphism over $\PP^1$ and isomorphism as elliptic surfaces, which can
be of independent interest. In particular, we are able to state which of the coprime Jacobians
$\Jac^k(X)$ of an elliptic K3 surfaces
$X$ are isomorphic as elliptic surfaces (resp. over $\PP^1$).
Indeed, very 
general elliptic K3 surfaces
with multisection index $t$ have at most
$\frac{\phi(t)}{2}$ coprime
Jacobians, and the
explicit number can be
computed in all cases
as follows:
\begin{proposition}(see Proposition \ref{prop:jacobians-isom})
Let $X$ be a complex elliptic K3 surface.
There exist explicitly defined
cyclic subgroups $B_X \subset \wt{B}_X$ of $(\Z/t\Z)^*$,
such that
the number of isomorphism classes of coprime
Jacobians $\Jac^k(X)$ considered up to isomorphism
over the base (resp. preserving the elliptic pencil)
equals ${\phi(t)}/{|B_X|}$ 
(resp. ${\phi(t)}/{|\wt{B}_X|}$).
\end{proposition}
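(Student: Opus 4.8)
The plan is to reduce the statement to an orbit-counting problem for a free action on $(\Z/t\Z)^*$. First I would set up, via Ogg-Shafarevich theory, a bijection between the coprime Jacobians $\Jac^k(X)$ (for $\gcd(k,t)=1$) and the unit group $(\Z/t\Z)^*$. Writing $J = \Jac^0(X)$ for the associated Jacobian elliptic surface and $\Sha(J/\PP^1)$ for its Tate-Shafarevich group, the class $[X] = [\Jac^1(X)]$ has order exactly $t$, the multisection index, and $[\Jac^k(X)] = k[X]$. Hence $\Jac^k(X)$ and $\Jac^{k'}(X)$ are isomorphic as $J$-torsors over $\PP^1$ precisely when $k \equiv k' \pmod t$, so the coprime Jacobians are parametrised by the $\phi(t)$ residues in $(\Z/t\Z)^*$. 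This is the starting point; the content is to determine which of these become identified under the two coarser equivalence relations.

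Next I would identify the subgroup $B_X \subset (\Z/t\Z)^*$ governing isomorphism over the base. An automorphism $\psi$ of $J$ as an elliptic surface over $\PP^1$ preserving the zero section acts on $\Sha(J/\PP^1)$, and (after checking that it preserves the cyclic subgroup $\langle [X]\rangle \cong \Z/t\Z$) acts there by multiplication by some unit $u_\psi$. This yields a homomorphism $\Aut(J/\PP^1) \to (\Z/t\Z)^*$, and I would define $B_X$ to be its image. Because $\psi$ carries the torsor of class $k[X]$ to the torsor of class $u_\psi k[X]$, each $u \in B_X$ induces $\Jac^k(X) \cong_{\PP^1} \Jac^{uk}(X)$; conversely an isomorphism $\Jac^k(X) \cong_{\PP^1} \Jac^{k'}(X)$ induces an automorphism of the common Jacobian $J$ and therefore forces $k' = u k$ for some $u \in B_X$. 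Thus two coprime Jacobians are isomorphic over the base exactly when they lie in one $B_X$-orbit. The inversion $[-1]\colon J \to J$ shows $-1 \in B_X$ always. To make $B_X$ explicit I would pass through the Derived Torelli Theorem: automorphisms of $J$ over $\PP^1$ are governed by the Hodge isometries in $G_X$ that fix the fibre class and the zero-section class, and tracking their induced action on the discriminant data $A_{\NS(X)}$ pins down the unit $u_\psi$, exhibiting $B_X$ as an explicit cyclic subgroup (recovering $B_X = \{\pm 1\}$, and the count $\phi(t)/2$, when $X$ is very general so that $G_X = \{\pm 1\}$).

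For the relation of preserving the elliptic pencil I would enlarge $B_X$ to $\wt{B}_X$ by also allowing isomorphisms that act non-trivially on the base. Any automorphism $\sigma$ of $\PP^1$ under which the fibration of $J$ is invariant lifts to an automorphism of $J$ covering $\sigma$, which again acts on $\Sha(J/\PP^1)$ by a unit; adjoining these units to $B_X$ produces a subgroup $\wt{B}_X \supseteq B_X$, and $\Jac^k(X) \cong \Jac^{k'}(X)$ preserving the pencil exactly when $k' \in \wt{B}_X \cdot k$. I would compute $\wt{B}_X$ explicitly by the same Hodge-theoretic bookkeeping, now using the full group of Hodge isometries of $T(X)$ together with the induced action on the fibre class rather than only the fibration-fixing subgroup. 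The containment $B_X \subseteq \wt{B}_X$ is automatic, since every over-the-base isomorphism preserves the pencil.

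Finally, the count is immediate: the regular action of a subgroup $H \le (\Z/t\Z)^*$ on $(\Z/t\Z)^*$ by multiplication is free, so every orbit has size $|H|$ and the number of orbits is $\phi(t)/|H|$. Applying this to $H = B_X$ and $H = \wt{B}_X$ gives the two asserted counts $\phi(t)/|B_X|$ and $\phi(t)/|\wt{B}_X|$. The main obstacle is the explicit determination of the two subgroups in the preceding steps: one must translate the geometric isomorphisms (over the base, and preserving the pencil) faithfully into the Ogg-Shafarevich action on $\Sha(J/\PP^1)$ and then match this with the lattice-theoretic action of $G_X$ on the discriminant group via Derived Torelli, being careful that this action really preserves the cyclic subgroup generated by $[X]$, that distinct geometric equivalences do not secretly coincide, and that the resulting subgroups are genuinely cyclic.
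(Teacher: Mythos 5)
Your proposal is correct and follows essentially the same route as the paper: parametrise the coprime Jacobians by $(\Z/t\Z)^*$ via the Tate--Shafarevich/Brauer group of $S=\Jac^0(X)$, define $B_X$ (resp.\ $\wt{B}_X$) as the image in $(\Z/t\Z)^*$ of the group automorphisms of $S$ over $\PP^1$ (resp.\ preserving the fibre class) that stabilise $\langle\alpha_X\rangle$, identify isomorphism classes with orbits of these groups on $\Br(S)$, and count orbits of the free multiplication action. The only cosmetic difference is that the paper pins down the structure of $B_X$ (cyclic of order $2$, $4$ or $6$, tied to the $j$-invariant) via the group automorphisms of the generic fibre rather than through Derived Torelli, reserving the Hodge-isometry argument for the $T$-general case.
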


The group $B_X$ can only be non-trivial if $X$ is isotrivial with $j$-invariant $0$ or $1728$.
We give examples when $B_X$ and $\wt{B}_X$ are non-trivial, and when they are different.

\subsection*{Applications}
We deduce from Theorem \ref{thm:main-intro}
that zeroth 
Jacobians of derived equivalent elliptic K3 surfaces are non-isomorphic in general (Corollary \ref{cor:Jac0-different}), that is passing to the Jacobian 
can not be defined solely in terms of the derived category (Remark \ref{rem:DE-Jac0}).

Furthermore, Theorem \ref{thm:main-intro}
is relevant every time potential 
consequences
of derived equivalence between K3 surfaces
are considered.
Let us explain two non-trivial 
situations when the explicit or geometric
form of derived equivalence is desirable.
The first is rational points over non-closed fields and the second is L-equivalence.

The motivation of Hassett--Tschinkel \cite{HT14}
was the question of existence
of rational points
on derived equivalent elliptic K3 surfaces
over non-closed fields.
Namely, since $X$ and any of
its coprime Jacobians $\Jac^k(X)$
are isogenous, it follows that
$X$ has a rational 
point if and only if $\Jac^k(X)$ has a rational
point by the Lang-Nishimura theorem.
Using Galois descent, 
as we know automorphism groups
of elliptic K3 surfaces quite explicitly,
we can partially extend
Theorem \ref{thm:main-intro}
to subfields $k \subset \CC$,
and deduce the implication about rational points of Fourier-Mukai partners
(see Corollary \ref{cor:ratpointfmpartners}).
We note that the question about
the simultaneous existence
of rational points
on derived equivalent K3 surfaces still seems to be open.

Another application for Theorem \ref{thm:main-intro} is to the question of L-equivalence of derived equivalent
K3 surfaces $X$, $Y$
\cite{KuznetsovShinder}.
For elliptic K3 surfaces the natural strategy
is to prove L-equivalence for the generic fibres, which are genus one curves
over the function field of the base, and then spread-out the L-equivalence over the total space. This strategy has been realized in \cite{SZ20}
for elliptic K3 surfaces of multisection index five. It follows from Theorem \ref{thm:main-intro}
that the same approach can work 
when the mutlisection index $t$ is a power of a prime (and $d$ is arbitrary).

\subsection*{Structure of the paper} In Section \ref{sec:prelim},
we recall basic classical
results about lattices and complex K3 surfaces, and
moduli spaces of sheaves on K3 surfaces.
In Section \ref{sec:ellK3-rk2},
we describe in detail the elliptic K3 surfaces of rank two, including their Neron-Severi lattices, 
Lagrangian elements in their discriminant lattices, Hodge isometries of the transcendental lattices and the group of automorphisms. Most results in this section are standard except the focus on the Lagrangian elements.
In Section \ref{sec:Jacobians},
we recall the Ogg--Shafarevich theory and explain in detail when different Jacobians of a given elliptic fibration are isomorphic.
In Section \ref{sec:DEFM},
we introduce derived elliptic structures and Hodge elliptic structures on a K3 surface 
and fully classify them in terms of Lagrangian elements in the case of Picard rank two.  

\subsection*{Acknowledgements}

We thank Arend Bayer, Tom Bridgeland, Daniel Huybrechts,
Alexander Kuznetsov,
Gebhard Martin,
Giacomo Mezzedimi,
Sofia Tirabassi and
Mauro Varesco
for discussions and
interest in our work.

\section{Preliminary results}
\label{sec:prelim}

\subsection{Lattices} 
Our main reference for lattice theory is \cite{Nik80}. 
A lattice is a finitely generated free abelian group $L$ together with a symmetric non-degenerate bilinear form $b: L\times L\rightarrow \Z$. We consider
the quadratic
form $q(x) = b(x,x)$ and sometimes we write
$x \cdot y$ for $b(x,y)$ and $x^2$ for $q(x)$. 
A morphism of lattices between $(L,b)$ and $(L',b')$ is a group homomorphism $\sigma: L\rightarrow L'$ which respects the bilinear forms, meaning $b(x,y) = b'(\sigma(x),\sigma(y))$ for all $x,y\in L$. An isomorphism of lattices is called an isometry. We write $O(L)$ for the group of isometries of $L$. 
The lattice $L$ is called even if $x^2$ is even for all $x\in L$.
All the lattices we consider will be assumed to be even.

The dual of a lattice $L$ is defined as $L^*\coloneqq\operatorname{Hom}(L,\Z)$. It comes equipped with a natural bilinear form taking values in $\Q$. The bilinear
form gives rise to a natural map $L\to L^*$
which is injective because we assume $b$ to be non-degenerate;
furthermore we have a canonical isomorphism
\begin{equation}\label{eq:characterisingduallattice}
    L^* \simeq \left\{x\in L\otimes \Q\mid \forall y\in L: x\cdot y\in \Z \right\} \subseteq L\otimes \Q.
\end{equation}
The quotient $L^*/L = A_L$ is called the discriminant group of $L$. 
If the discriminant group is trivial, we call $L$ unimodular.
The discriminant group comes equipped with a quadratic form $\overline{q}: A_L\rightarrow \Q/2\Z$. 
There is
an orthogonal direct sum decomposition
\begin{equation}
\label{eq:A-primary}
A_L = \bigoplus_p A_L^{(p)}    
\end{equation}
where $A_L^{(p)}$
consists of elements annihilated by a power of a prime $p$. The group $A_L^{(p)}$ coincides
with the discriminant group of the $p$-adic lattice
$L \otimes \Z_p$.
Two lattices $L,L'$ are said to be in the same genus if they have the same signature and have isometric discriminant groups.  

An \textit{overlattice} of a lattice $T$ is a
lattice $L$ together with an embedding of lattices $T\hookrightarrow L$ of finite index. 
We say that two overlattices $T\hookrightarrow L$ and $T'\hookrightarrow L'$ are isomorphic if there exists a commutative diagram 
$$\xymatrix{
T \ar[r] \ar[d]_\sigma & L \ar[d]^\tau\\ T' \ar[r] & L'
}$$ where $\sigma$ and $\tau$ are isometries.

For any overlattice $T\hookrightarrow L$, there is a natural embedding of the cokernel $H_L\coloneqq L/T$ in the discriminant group of $T$ via the chain of embeddings $$T\hookrightarrow L\hookrightarrow L^*\hookrightarrow T^*.$$ The subgroup $H_L$ is isotropic with respect to the quadratic form on $A_T$,
and conversely any isotropic
subgroup of $A_T$ gives rise to an overlattice of $T$.
The following result gives a complete
classification of all overlattices of
a given lattice $T$, up to isomorphism.

\begin{lemma}[{\cite[Proposition 1.4.2]{Nik80}}] \label{lem:overlatsubgrcorresp}
Let $T$ be a lattice, and let $T\hookrightarrow L$ and $T\hookrightarrow M$ be two overlattices of $T$. An isometry $\sigma\in O(T)$ fits into a commutative diagram of the form 
\begin{equation}\label{diag:overlatticesoriginal}
    \xymatrix{
    T \ar[r]\ar[d]^-{\sigma} & L \ar[d]^-{\simeq} \\
    T \ar[r] & M
    }
\end{equation}
if and only if the induced isometry $\overline{\sigma}\in O(A_T)$ satisfies $\overline{\sigma}(H_L) = H_M$. Moreover, the assignment $(T\hookrightarrow L) \mapsto H_L$ is a bijection between the set of isomorphism classes of overlattices of $T$ and the set of $O(T)$-orbits of isotropic subgroups of $A_T.$
\end{lemma}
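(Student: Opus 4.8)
The plan is to reduce everything to a single observation: inside the rational vector space $V \coloneqq T \otimes \Q$, an overlattice $L$ is completely recorded by the subgroup $H_L \subseteq A_T$, together with a rigidity statement saying that any isometry between two overlattices extending a fixed $\sigma \in O(T)$ is forced to be the restriction of $\sigma \otimes \Q$. Throughout I would fix the projection $\pi \colon T^* \to A_T = T^*/T$ and use the identification \eqref{eq:characterisingduallattice} of $T^*$ with $\{x \in V : x \cdot T \subseteq \Z\}$, so that $L = \pi^{-1}(H_L)$ and $M = \pi^{-1}(H_M)$.

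First I would establish the equivalence governing the square \eqref{diag:overlatticesoriginal}. An isometry $\sigma \in O(T)$ extends uniquely to an isometry $\sigma_\Q \coloneqq \sigma \otimes \Q$ of $V$; because the form is non-degenerate, $\sigma_\Q$ preserves $T^* \subseteq V$ and hence descends to the induced $\overline{\sigma} \in O(A_T)$. Since $L \otimes \Q = M \otimes \Q = V$, any isometry $\tau \colon L \to M$ with $\tau|_T = \sigma$ must agree with $\sigma_\Q$ on all of $V$, so $\tau = \sigma_\Q|_L$; thus a commutative square exists if and only if $\sigma_\Q(L) = M$. Using $\pi(\sigma_\Q(x)) = \overline{\sigma}(\pi(x))$ one gets $\sigma_\Q(L) = \pi^{-1}(\overline{\sigma}(H_L))$, and since $\pi^{-1}$ is injective on subgroups this equals $M = \pi^{-1}(H_M)$ exactly when $\overline{\sigma}(H_L) = H_M$. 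This yields both implications simultaneously.

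With this in hand the bijection follows formally. By the discussion preceding the lemma, $(T \hookrightarrow L) \mapsto H_L$ is well defined with isotropic image, and it is surjective because every isotropic subgroup $H$ is realised by the overlattice $\pi^{-1}(H)$. Two overlattices $T \hookrightarrow L$ and $T \hookrightarrow M$ are isomorphic precisely when a square as above exists for \emph{some} $\sigma \in O(T)$, which by the previous paragraph happens if and only if $\overline{\sigma}(H_L) = H_M$ for some $\sigma$, i.e. if and only if $H_L$ and $H_M$ lie in the same orbit under the action of $O(T)$ on $A_T$ through $\sigma \mapsto \overline{\sigma}$. Hence the assignment descends to a bijection between isomorphism classes of overlattices and $O(T)$-orbits of isotropic subgroups.

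The only genuinely non-formal point, and the step I would treat most carefully, is the rigidity: that an extension $\tau$ of $\sigma$ to $L$ is necessarily $\sigma_\Q|_L$, and that $\sigma_\Q$ preserves $T^*$ and induces $\overline{\sigma}$ on $A_T$. Both rest squarely on non-degeneracy of the form and on the characterisation \eqref{eq:characterisingduallattice}; once these are set up, compatibility of $\sigma_\Q$ with the filtration $T \subseteq T^* \subseteq V$ is immediate. For surjectivity one also needs that $\pi^{-1}(H)$ is a genuine even lattice, i.e. that its $\Q$-valued form is integral and even; this follows from $H$ being isotropic for $\overline{q}$ via the polarisation identity relating $\overline{q}$ to the discriminant bilinear form, but it is exactly the converse statement already granted before the lemma, so no new work is required.
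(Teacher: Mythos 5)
The paper offers no proof of this lemma, citing \cite[Proposition 1.4.2]{Nik80} instead; your argument is correct and is essentially the standard one from that source, namely identifying each overlattice with the preimage $\pi^{-1}(H_L)\subseteq T^*$ and using that any extension of $\sigma$ to an isometry $L\to M$ is forced to be $\sigma\otimes\Q$ restricted to $L$. All the key points (preservation of $T^*$ by $\sigma_\Q$, injectivity of $\pi^{-1}$ on subgroups, and evenness of $\pi^{-1}(H)$ for isotropic $H$) are handled correctly, so there is nothing to add.
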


Note that \eqref{diag:overlatticesoriginal} can be completed as follows:

\begin{equation} \label{diag:overlatticesoriginalextended}
    \xymatrix{
    T \ar[r] \ar[d]^-\sigma& L \ar[d]^-\simeq \ar@{->>}[r]& H_L\ar@{^(->}[r] \ar[d]^-{\overline{\sigma}|_{H_L}} & A_T\ar[d]^-{\overline{\sigma}} \\
    T \ar[r] & M \ar@{->>}[r] & H_M \ar@{^(->}[r] & A_T
    }
\end{equation}

\subsection{K3 surfaces}
\label{ssec:K3}
Our basic reference for K3 surfaces is \cite{Huy16}.
If $X$ is a complex projective
K3 surface, $H^2(X,\Z)$ is a free abelian group of rank 22. Moreover, the cup product is a symmetric bilinear form on $H^2(X,\Z)$, turning $H^2(X,\Z)$ into an even, unimodular lattice isometric to $\Lambda_{\text{K3}} = U^{\oplus 3}\oplus E_{8}(-1)^{\oplus2}$. Here $U$ is the hyperbolic lattice given by the symmetric bilinear form $$\left(\begin{matrix}
0 & 1\\
1& 0
\end{matrix}\right),$$ and $E_8$ is the unique even, unimodular, positive-definite lattice of rank 8 (see \cite[\S VIII.1]{BPV12} for details).
The N\'eron-Severi lattice $\operatorname{NS}(X)$ is a sublattice of $H^2(X,\Z)$, defined as the image of the first Chern class $c_1: \operatorname{Pic}(X)\hookrightarrow H^2(X,\Z)$. We have $\Pic(X) \simeq \NS(X)$; it is a free abelian group of rank $\rho$ which is called the
Picard number of $X$.

The orthogonal complement $T(X) = \operatorname{NS}(X)^\perp \subseteq H^2(X,\Z)$ is called the transcendental lattice of $X$. The image of the line $H^{2,0}(X) = \CC\sigma \subset H^2(X,\CC)$ under any isometry $H^2(X,\Z)\to \Lambda_{K3}$ is called a \textit{period} of $X$. Since $\sigma^2 = 0$ and $\sigma\cdot \overline{\sigma}>0$, any period of $X$ lies in the open subset $$D \coloneqq  \left\{\ell\in \PP(\Lambda_{K3}\otimes \CC) \mid \ell^2 = 0 \text{ and } \ell\cdot \overline{\ell}>0\right\},$$ called the period domain. The following two results are among the most fundamental results about K3 surfaces.

\begin{theorem}[Surjectivity of the Period Map]\cite{Tod80}\label{thm:surjectivity-period}
Any point in the period domain is a period of a K3 surface, i.e. for any $\ell \in D$, there is a K3 surface $X$ with an isometry $H^2(X,\Z)\to \Lambda_{K3}$ such that $H^2(X,\CC)\to \Lambda_{K3}\otimes \CC$ maps $H^{2,0}(X)$ to $\ell$.
\end{theorem}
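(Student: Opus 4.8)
The plan is to establish the surjectivity of the period map by the classical route: combine the local Torelli theorem (to see that the image is open) with the hyperk\"ahler, or twistor, structure coming from Yau's theorem (to see that the image is closed). First I would assemble the marked K3 surfaces into a moduli space. A marked K3 surface is a pair $(X,\varphi)$ where $\varphi\colon H^2(X,\Z)\xrightarrow{\sim}\Lambda_{K3}$ is an isometry, and these organise into a (possibly non-Hausdorff, which causes no trouble here) $20$-dimensional complex manifold $\MM$ carrying a period map
\[
\mathcal{P}\colon \MM \longrightarrow D, \qquad (X,\varphi)\longmapsto \varphi_\CC\big(H^{2,0}(X)\big).
\]
The local Torelli theorem makes $\mathcal{P}$ a local biholomorphism: K3 surfaces have unobstructed deformations, the Kodaira--Spencer map identifies the deformation space $H^1(X,T_X)$ with $H^1(X,\Omega^1_X)$, which is $20$-dimensional and matches $\dim_\CC D=20$, and a Griffiths-transversality computation shows that $d\mathcal{P}$ is an isomorphism. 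In particular the image $U \coloneqq \mathcal{P}(\MM)$ is open.

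The main new ingredient is the twistor structure. By Yau's solution of the Calabi conjecture, every K\"ahler class on a K3 surface contains a unique Ricci-flat metric, and this metric is hyperk\"ahler: it produces a two-sphere $S^2\cong\PP^1$ of complex structures and hence a twistor family of K3 surfaces. The periods of this family sweep out a \emph{twistor line} in $D$, a smooth rational curve determined by the positive-definite three-plane $W=\langle \operatorname{Re}\sigma,\operatorname{Im}\sigma,\kappa\rangle \subset \Lambda_{K3}\otimes\mathbb{R}$ spanned by the real and imaginary parts of the period together with (the image of) the K\"ahler class $\kappa$. The crucial point is that every point of a twistor line is by construction the period of an honest K3 surface, so every twistor line is entirely contained in $U$.

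It then remains to deduce that $U$ is closed, since $D$ is connected (unlike the signature $(2,n)$ case, the extra positive direction of the signature $(3,19)$ lattice makes the period domain connected), and open plus closed plus connected forces $U=D$. I expect this closedness step to be the main obstacle. Given $x\in\overline{U}\cap D$, I would pick $(X_n,\varphi_n)\in\MM$ with $\mathcal{P}(X_n,\varphi_n)\to x$ together with K\"ahler classes $\kappa_n$, and pass to a limit of the associated positive three-planes $W_n$. Because $x\in D$, the two-plane spanned by the real and imaginary parts of the period at $x$ is already positive-definite, so one can arrange that $\kappa_n$ converges to a class $\kappa$ keeping the limiting three-plane $W$ positive-definite; the corresponding twistor lines then converge to a genuine twistor line through $x$, which lies in $U$, giving $x\in U$. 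The delicate points are precisely this choice of the $\kappa_n$ so that positivity of the three-plane survives in the limit (preventing the twistor line from degenerating) and, underneath everything, the analytic input of Yau's theorem supplying the hyperk\"ahler metric. Granting these, $U$ is open and closed in the connected domain $D$, hence $U=D$, which is exactly the asserted surjectivity.
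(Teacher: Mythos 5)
The paper does not prove this statement; it is quoted as a classical result with a citation to Todorov, so there is no internal proof to compare against. Your overall strategy --- the moduli space $\MM$ of marked K3 surfaces, local Torelli to get openness of the image, twistor lines from Yau's theorem, and connectedness of $D$ --- is indeed the standard route, and the preliminary steps (unobstructedness, $\dim_\CC D = 20 = h^1(X,T_X)$, connectedness of $D$ in signature $(3,19)$, twistor lines lying entirely in the image) are all correct.

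However, the closedness step, which you correctly identify as the crux, is circular as written. You take periods $\mathcal{P}(X_n,\varphi_n)\to x$ with K\"ahler classes $\kappa_n$ and positive $3$-planes $W_n\to W$, and assert that the twistor lines $T_{W_n}\subset U$ ``converge to a genuine twistor line through $x$, which lies in $U$.'' But a \emph{genuine} twistor line through $x$ presupposes a K3 surface with period $x$ carrying a K\"ahler class spanning $W\cap \ell^\perp$ --- which is exactly the existence statement you are trying to prove. What the limit actually gives you is a rational curve $T_W\subset\overline{U}$ through $x$, i.e.\ only $x\in\overline{U}$, which you already knew. The obstruction is not ``positivity of the three-plane surviving the limit'': even when $W$ is positive-definite, the limiting class $\kappa$ may be orthogonal to a $(-2)$-class, so it lies on a wall of the would-be K\"ahler cone and the Ricci-flat metrics degenerate (orbifold/ADE degenerations) rather than converging to a hyperk\"ahler metric on a smooth K3. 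The two standard repairs are: (a) replace ``open and closed'' by a propagation argument along chains of \emph{generic} twistor lines (those whose $3$-plane is orthogonal to no $(-2)$-class), proving that any two points of $D$ are joined by such a chain and that if one endpoint of a generic twistor line is a period then the whole line consists of periods --- here one must still reconcile ``positive class'' with ``K\"ahler class'' using the chamber structure of the positive cone and the Weyl group; or (b) Todorov's original route, which combines density of periods of Kummer surfaces in $D$ with a genuine degeneration/compactness argument for the Ricci-flat metrics. Either way, a substantive additional argument is needed where your sketch currently says ``the twistor lines converge to a genuine twistor line.''
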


\begin{theorem}[Torelli Theorem for K3 Surfaces] 
\cite{PS71} (see \cite[Theorem 5.5.3]{Huy16})
\label{thm:torelli}
Let $X$ and $Y$ be K3 surfaces. 
Then 
$X$ and $Y$
are isomorphic
 if and only if there exists a Hodge isometry $H^2(X,\Z)\simeq H^2(Y,\Z)$. Moreover, for any Hodge isometry $\psi: H^2(X,\Z)\to H^2(Y,\Z)$ which preserves the ample cone, there is a unique isomorphism $f: X\to Y$ such that $\psi = f_*$.
\end{theorem}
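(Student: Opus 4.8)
The plan is to dispatch the easy direction and then build an isomorphism out of a Hodge isometry, reducing the general existence claim to the strong (ample-cone preserving) form. If $f\colon X\to Y$ is an isomorphism, then $f^*\colon H^2(Y,\Z)\to H^2(X,\Z)$ preserves the cup product and the Hodge decomposition, so its inverse $f_*$ is a Hodge isometry; this gives the ``only if'' direction and also explains why uniqueness is plausible, since for a K3 surface the representation $\Aut(X)\to O(H^2(X,\Z))$ is faithful, so an isomorphism inducing a prescribed $\psi$ is unique if it exists. For the converse, let $\psi\colon H^2(X,\Z)\to H^2(Y,\Z)$ be a Hodge isometry. Since $\psi$ preserves the period line $H^{2,0}$, it carries $\NS(X)$ to $\NS(Y)$ and $T(X)$ to $T(Y)$, and on the real $(1,1)$-parts it sends the positive cone $\mathcal{C}_X$ to $\pm\mathcal{C}_Y$; after replacing $\psi$ by $-\psi$ if necessary (which is again a Hodge isometry) we may assume it maps $\mathcal{C}_X$ to $\mathcal{C}_Y$.

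Next I would reduce to the ample-cone preserving case. The positive cone of $Y$ is tiled into chambers by the hyperplanes $\delta^\perp$ for $(-2)$-classes $\delta\in\NS(Y)$, the ample cone being one such chamber, and the Weyl group $W_Y\subset O(\NS(Y))$ generated by the reflections $s_\delta$ acts simply transitively on these chambers. Each $s_\delta$ extends to a Hodge isometry of $H^2(Y,\Z)$ acting trivially on $T(Y)$, so after composing $\psi$ with a suitable element of $W_Y$ I obtain a Hodge isometry $\psi'$ that maps the ample cone of $X$ onto that of $Y$. The general ``if'' direction then follows once the strong statement is known for $\psi'$, since undoing the reflections recovers an isomorphism inducing $\psi$ up to the Weyl correction, and the correction is itself realized geometrically.

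It remains to realize an ample-cone preserving Hodge isometry $\psi'$ by an isomorphism, and this is the crux. Here I would pass to the moduli space $N$ of marked K3 surfaces together with the period map $\mathcal{P}\colon N\to D$. By the Local Torelli theorem $\mathcal{P}$ is a local isomorphism, and by surjectivity of the period map (Theorem \ref{thm:surjectivity-period}) it is surjective; fixing a marking $\phi\colon H^2(X,\Z)\to\Lambda_{K3}$, the pairs $(X,\phi)$ and $(Y,\phi\circ(\psi')^{-1})$ have the same image point $p\in D$, so $X$ and $Y$ occur as two points of the fibre $\mathcal{P}^{-1}(p)$.

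The hard part is that $N$ is non-Hausdorff, so I must control the global fibre $\mathcal{P}^{-1}(p)$ and not merely its local structure: the essential input is that any two points of such a fibre correspond to isomorphic K3 surfaces whose markings differ by a Hodge isometry fixing the period. This is exactly where the real geometric work lies, and I would supply it either through the Burns--Rapoport analysis of non-separated points (presenting two inseparable marked surfaces as degenerations of a common family and using the K\"ahler cone to produce the identification) or through twistor lines, connecting the two period points by the twistor family of a K\"ahler class and exploiting that hyperk\"ahler rotation yields isomorphisms. The ample-cone hypothesis on $\psi'$ then singles out which isomorphism $f$ arises and forces $f_*=\psi'$; transporting back through the Weyl correction completes the proof. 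Establishing this separation statement for the fibres of the period map is by far the main obstacle, and it is the step that genuinely requires surjectivity of the period map together with the twistor or deformation-theoretic machinery.
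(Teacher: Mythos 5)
The paper does not prove this statement at all: it is quoted as a classical theorem with references to \cite{PS71} and \cite[Theorem 5.5.3]{Huy16}, so there is no internal proof to compare against. Judged on its own terms, your outline follows the standard textbook strategy (as in \cite{Huy16} or \cite[\S VIII]{BPV12}): the easy direction via $f^*$, the reduction of a general Hodge isometry to an ample-cone-preserving one by composing with $-\operatorname{id}$ and with reflections $s_\delta$ in $(-2)$-classes (which indeed act trivially on $T(Y)$ and simply transitively on the chambers of the positive cone), and then the passage to the moduli space of marked K3 surfaces and the period map. All of that is structurally correct.

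However, as a proof it has a gap exactly where you yourself locate ``the main obstacle'': the assertion that any two points of a fibre $\mathcal{P}^{-1}(p)$ of the period map correspond to isomorphic K3 surfaces with compatible markings is the entire content of the Global Torelli Theorem, and you name the machinery (Burns--Rapoport analysis of non-separated points, or twistor lines) without carrying out any of it. Local Torelli plus surjectivity of the period map (Theorem \ref{thm:surjectivity-period}) only place $X$ and $Y$ in the same fibre of a non-Hausdorff moduli space; extracting an isomorphism from this, and showing the ample-cone hypothesis forces $f_*=\psi'$, is the deep step and is not reducible to the ingredients you have actually established. A secondary point: your uniqueness argument invokes faithfulness of $\Aut(X)\to O(H^2(X,\Z))$, which is itself a nontrivial theorem (usually proved via the Lefschetz fixed point formula or derived alongside the Torelli package), so it should be flagged as an input rather than treated as evident. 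In short, your proposal is a correct roadmap to the classical proof, not a proof; for the purposes of this paper the appropriate move is the one the authors make, namely to cite the result.
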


The Hodge structure on the transcendental lattice determines $X$ up to derived equivalence due to what is known as the Derived Torelli Theorem.

\begin{theorem}[Derived Torelli Theorem] \cite{Muk87}, \cite{Orl03} \label{thm:derivedtorelli}
Let $X$ and $Y$ be two K3 surfaces. Then there exists an equivalence $\Db(X)\simeq\Db(Y)$ if and only if there exists a Hodge isometry $T(X)\simeq T(Y)$.
\end{theorem}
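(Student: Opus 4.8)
The plan is to prove the two directions separately, using the \emph{Mukai lattice} $\wt{H}(X,\Z) = H^0(X,\Z) \oplus H^2(X,\Z) \oplus H^4(X,\Z)$ equipped with the Mukai pairing and the weight-two Hodge structure in which $H^{2,0}(X) \subset H^2(X,\CC)$ is the $(2,0)$-part while $H^0$ and $H^4$ are algebraic. Abstractly $\wt{H}(X,\Z)$ is isometric to $\extklat = U^{\oplus 4} \oplus E_8(-1)^{\oplus 2}$. The observation I would use throughout is that the transcendental part of this weight-two Hodge structure is exactly $T(X)$, since $H^0$ and $H^4$ contribute only to the $(1,1)$-part.

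For the forward direction, suppose $\Phi \colon \Db(X) \to \Db(Y)$ is an equivalence. First I would invoke Orlov's representability theorem \cite{Orl03} to write $\Phi$ as a Fourier--Mukai transform with some kernel $\mathcal{E} \in \Db(X \times Y)$. The induced cohomological transform, which sends a class $\alpha$ to $p_{Y*}\!\left(p_X^* \alpha \cdot v(\mathcal{E})\right)$ with $v(\mathcal{E}) = \operatorname{ch}(\mathcal{E}) \cdot \sqrt{\operatorname{td}(X \times Y)}$, is then a Hodge isometry $\wt{H}(X,\Z) \to \wt{H}(Y,\Z)$ for the Mukai pairing and the Hodge structures above. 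Restricting it to transcendental parts --- which makes sense because the transcendental lattice is intrinsic to the Hodge structure and equals $T(X)$, resp.\ $T(Y)$ --- yields the desired Hodge isometry $T(X) \cong T(Y)$.

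For the converse, suppose given a Hodge isometry $\psi \colon T(X) \to T(Y)$; the strategy is to realise $Y$ as a moduli space of sheaves on $X$. First I would extend $\psi$ to a Hodge isometry $\wt{\psi} \colon \wt{H}(X,\Z) \to \wt{H}(Y,\Z)$ of full Mukai lattices. This is a lattice-theoretic step: the orthogonal complement of $T(X)$ in $\wt{H}(X,\Z)$ is the extended N\'eron--Severi lattice, which contains $H^0 \oplus H^4 \cong U$ as an orthogonal summand, and by Nikulin's results on discriminant forms and extension of isometries \cite{Nik80} the presence of this $U$-summand lets one lift $\psi$. Setting $v \coloneqq \wt{\psi}^{-1}(0,0,1)$, the preimage of the point class of $Y$, the class $v \in \wt{H}(X,\Z)$ is primitive, algebraic, and isotropic. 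For a $v$-generic polarisation $H$, Mukai's theory \cite{Muk87} shows that the moduli space $M_X(H,v)$ of $H$-stable sheaves with Mukai vector $v$ is a K3 surface of dimension $v^2 + 2 = 2$, and --- after twisting $v$ by the algebraic classes $(1,0,0)$, $(0,0,1)$ if necessary to make it fine --- the universal sheaf is a kernel inducing an equivalence $\Db(X) \simeq \Db(M_X(H,v))$. Mukai's identification $H^2(M_X(H,v),\Z) \cong v^\perp / \Z v$ is compatible with $\wt{\psi}$ and so descends to a Hodge isometry $H^2(M_X(H,v),\Z) \cong H^2(Y,\Z)$; correcting it by a reflection so that it preserves the ample cone, the Torelli Theorem \ref{thm:torelli} gives $M_X(H,v) \cong Y$, and composition yields $\Db(X) \simeq \Db(Y)$.

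I expect the main obstacle to be the converse direction, at the interface of lattice theory and moduli theory: guaranteeing that the Hodge isometry on $T$ extends to the Mukai lattice, and that the resulting primitive isotropic algebraic vector $v$ yields a \emph{non-empty} and \emph{fine} moduli space which is genuinely isomorphic to $Y$ rather than merely derived-equivalent. Non-emptiness and fineness hinge on a careful choice of polarisation and a possible twist of $v$; checking that these adjustments remain compatible with $\wt{\psi}$, and that the universal family produces an equivalence and not just a cohomological correspondence, is the technical heart of the argument. The forward direction, by contrast, is essentially formal once Orlov's representability \cite{Orl03} is granted.
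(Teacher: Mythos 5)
The paper does not prove this statement: it is quoted as the classical Derived Torelli Theorem with citations to Mukai and Orlov, and your sketch is precisely the standard argument contained in those references (equivalence $\Rightarrow$ Orlov representability $\Rightarrow$ Hodge isometry of Mukai lattices $\Rightarrow$ restriction to the transcendental part; conversely, extend the isometry to the Mukai lattice and realise $Y$ as a fine moduli space $M_X(H,v)$ with $v = \wt{\psi}^{-1}(0,0,1)$). One small correction to your account of the converse: fineness of $M_X(H,v)$ does not require twisting $v$ --- it is automatic because $\wt{\psi}^{-1}(1,0,0)$ is an algebraic class whose Mukai pairing with $v$ equals $-1$, so $v$ has divisibility one in the algebraic part; the adjustments by sign, shift and tensoring with line bundles are needed instead to arrange that $v$ has positive rank, which is what guarantees non-emptiness of the moduli space.
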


If $\Db(X) \simeq \Db(Y)$, we say that $X$ and $Y$
are derived equivalent and that $Y$ is
a Fourier-Mukai partner of $X$.
Theorem \ref{thm:derivedtorelli} implies that two derived equivalent K3 surfaces must have equal Picard numbers. 
If we denote $\Lambda = \operatorname{NS}(X)$, there is an isometry \cite[Corollary 1.6.2]{Nik80}
\begin{equation}\label{eq:A-orthog}
(A_\Lambda,\overline{q_\Lambda}) \simeq (A_{T(X)},-\overline{q_{T(X)}}).
\end{equation}
Thus derived equivalent K3 surfaces
have isomorphic discriminant lattices,
and it follows easily that
their 
N\'eron-Severi lattices 
 must be in the same genus. Instead of $(A_{T(X)},-\ol{q_{T(X)}})$, we usually write $A_{T(X)}(-1)$.

For a K3 surface $X$
we write $G_X$ for the Hodge isometries group
of $T(X)$. Then $G_X\simeq \Z/2g\Z$ for some $g\geq 1$, and we have $\phi(2g)\;|\;\operatorname{rk}T(X)$ \cite[Appendix B]{HLOY02}. 

From the Derived Torelli Theorem one can deduce:
\begin{theorem}[Counting Formula]\cite{HLOY02}
Let $X$ be a K3 surface, and write $\operatorname{FM}(X)$ for the set of isomorphism classes of Fourier-Mukai partners of $X$.
Then 
$$|\operatorname{FM}(X)| = \sum_{\Lambda}|O(\Lambda) \setminus O(A_\Lambda)/G_X|$$
where the sum runs over isomorphism
classes of lattices $\Lambda$ which are
in the same genus as the N\'eron-Severi lattice $\NS(X)$.
Furthermore, each summand computes
the number of isomorphism classes
of Fourier-Mukai partners $Y$ of $X$ with $\NS(Y) \simeq \Lambda$.
\end{theorem}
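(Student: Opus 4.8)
The plan is to translate the enumeration of Fourier--Mukai partners into an enumeration of gluing data for the unimodular overlattice $\Lambda_{K3}$ and then feed it through the overlattice dictionary of Lemma \ref{lem:overlatsubgrcorresp}. First I would fix the weight-two Hodge structure on $T = T(X)$ and use the Derived Torelli Theorem (Theorem \ref{thm:derivedtorelli}) to replace the set $\operatorname{FM}(X)$ by the set of isomorphism classes of K3 surfaces $Y$ admitting a Hodge isometry $T(Y) \simeq T(X)$. For any such $Y$, equation \eqref{eq:A-orthog} together with the fact that $\operatorname{NS}(Y)$ has the fixed signature $(1,\rho-1)$ shows that $\operatorname{NS}(Y)$ lies in the genus of $\operatorname{NS}(X)$; this is what produces the outer sum over $\Lambda$, and it suffices to count, for each fixed $\Lambda$ in this genus, the partners $Y$ with $\operatorname{NS}(Y) \simeq \Lambda$.

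Next I would realise every such $Y$ as a primitive embedding $\iota \colon T \hookrightarrow \Lambda_{K3}$ with $\iota(T)^\perp \simeq \Lambda$: transporting the fixed Hodge structure on $T$ along $\iota$ makes $\Lambda_{K3}$ into a weight-two Hodge structure whose algebraic part is $\iota(T)^\perp$, and by surjectivity of the period map (Theorem \ref{thm:surjectivity-period}) this is realised by a K3 surface. Since $\Lambda_{K3}$ is unimodular, $\iota$ exhibits $\Lambda_{K3}$ as an even unimodular overlattice of $T \oplus \Lambda$; applying Lemma \ref{lem:overlatsubgrcorresp} to $T \oplus \Lambda$, unimodularity forces the corresponding isotropic subgroup of $A_T \oplus A_\Lambda$ to be the graph of an anti-isometry $\gamma \colon A_T \xrightarrow{\sim} A_\Lambda$, i.e. an isometry $(A_T, -\overline{q}_T) \to (A_\Lambda, \overline{q}_\Lambda)$. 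Fixing the reference anti-isometry of \eqref{eq:A-orthog} then identifies the set of such gluing maps $\gamma$ with $O(A_\Lambda)$.

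It remains to pin down when two embeddings yield isomorphic surfaces, and I expect this to be the delicate point. By the Torelli Theorem (Theorem \ref{thm:torelli}), $Y \simeq Y'$ if and only if there is a Hodge isometry $H^2(Y,\Z) \simeq H^2(Y',\Z)$; every such isometry restricts to a Hodge isometry of the transcendental part $T$ and to an isometry of $\Lambda$, and conversely a pair extends to $H^2$ precisely when the two factors agree on discriminant groups. The transcendental factor is an element of $G_X$ and the algebraic factor an arbitrary element of $O(\Lambda)$. Translating this compatibility through Lemma \ref{lem:overlatsubgrcorresp} and the identification above, $\gamma$ and $\gamma'$ define the same surface if and only if they lie in the same orbit of $O(\Lambda)$ acting on the left through $O(\Lambda) \to O(A_\Lambda)$ and of $G_X$ acting on the right through its image in $O(A_T) \simeq O(A_\Lambda)$. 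Hence the partners with $\operatorname{NS}(Y) \simeq \Lambda$ are counted by the double cosets $O(\Lambda) \backslash O(A_\Lambda) / G_X$, and summing over the genus yields the stated formula, with each summand counting the partners whose N\'eron--Severi lattice is isomorphic to $\Lambda$.

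The main obstacle is the bookkeeping in the last paragraph: one must verify that the ample-cone condition in Torelli does not distort the count. Concretely, any abstract Hodge isometry $H^2(Y,\Z) \to H^2(Y',\Z)$ can be corrected to one preserving ample cones by composing with an element of $\{\pm 1\}$ and a product of reflections in $(-2)$-classes of $\Lambda$; since all of these act trivially on $A_\Lambda$, they do not alter the double coset, so the passage from ``a Hodge isometry exists'' to ``the surfaces are isomorphic'' is harmless at the level of discriminant forms. A secondary point to check is that the reference anti-isometry \eqref{eq:A-orthog} genuinely conjugates the natural action of $G_X$ on $A_{T(X)}$ to a well-defined right action on $O(A_\Lambda)$, so that the double coset in the formula is well posed.
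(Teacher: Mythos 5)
The paper does not prove this statement: it is quoted directly from \cite{HLOY02}, so there is no internal proof to compare against. Your reconstruction is the standard argument of Oguiso and Hosono--Lian--Oguiso--Yau (Derived Torelli, genus of $\NS$, Nikulin's gluing description of primitive embeddings into the unimodular lattice $\Lambda_{\mathrm{K3}}$, then double cosets $O(\Lambda)\backslash O(A_\Lambda)/G_X$), and it is essentially correct. One small inaccuracy: $-\operatorname{id}$ does \emph{not} act trivially on $A_\Lambda$ in general; the reason it does not disturb the double coset is rather that $-\operatorname{id}$ on $H^2$ restricts to $-\operatorname{id}_{T}\in G_X$ and to $-\operatorname{id}_\Lambda\in O(\Lambda)$, both of which are already absorbed on the two sides of the double quotient (the reflections in $(-2)$-classes do act trivially on $A_\Lambda$, as you say).
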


It follows from the Counting Formula
that an elliptic K3 surface $S\to \PP^1$ which admits a section has 
no non-trivial Fourier-Mukai partners \cite[Proposition 2.7(3)]{HLOY02}. 

\begin{definition} \label{def:Tgeneral}
We say that
a K3 surface $X$ is $T$-general if $G_X = \left\{\pm \operatorname{id}\right\}.$ A K3 surface that is not $T$-general is called $T$-special.
\end{definition}

When $X$ is $T$-general,
the Counting Formula shows
that the number of Fourier-Mukai partners is maximal (for a fixed $\NS(X)$)
and only depends
on $\NS(X)$.
A similar effect holds
for the invariants we study, see
Theorem \ref{thm:derivedelliptic-lagrangianelts}. Thus it is important
to have explicit criteria for $T$-generality.
If the Picard number $\rho$ of $X$ is odd, then $\phi(2g)$ must be odd,
so $|G_X| = 2$ and $X$ is $T$-general.
Furthermore, we have the following
result going back to Oguiso \cite{Ogu01}:

\begin{lemma}[{\cite[Lemma 3.9]{SZ20}}]
If $X$ is a very general K3 surface in any lattice polarized
moduli space of K3 surfaces, with Picard number
$\rho<20$, then $X$ is T-general.
\end{lemma}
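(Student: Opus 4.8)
The plan is to reduce the statement to a countability (Baire-category) argument on the period domain of the transcendental lattice, exploiting that $\rho<20$ forces $\rk T(X)\ge 3$. Fix the lattice polarization and let $M$ be the generic N\'eron--Severi lattice, so that for a very general member $X$ we have $\NS(X)=M$ and the transcendental lattice $T:=T(X)$ is isometric to the fixed lattice $T_0=M^{\perp}\subseteq\Lambda_{\mathrm{K3}}$ of signature $(2,20-\rho)$ and rank $22-\rho\ge 3$. The period $\sigma$ then varies over the period domain $D_{T_0}:=\{\ell\in\PP(T_0\otimes\CC):\ell^2=0,\ \ell\cdot\bar\ell>0\}$, and ``very general'' means $\sigma$ avoids a prescribed countable union of proper closed analytic subsets of $D_{T_0}$.

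The first step is an elementary reformulation: an isometry $\phi\in O(T_0)$ underlies a Hodge isometry of $T(X)$ if and only if $\phi_\CC$ fixes the line $\CC\sigma$, i.e. $\sigma$ is an eigenvector of $\phi_\CC$. Indeed $T^{2,0}=\CC\sigma$ is one-dimensional, and since $\phi$ is defined over $\mathbb{R}$, preserving $\CC\sigma$ forces $\phi_\CC$ to preserve $\CC\bar\sigma$ and hence, by orthogonality, the entire Hodge decomposition. Consequently $G_X\supsetneq\{\pm\operatorname{id}\}$ precisely when $\sigma$ is an eigenvector of some $\phi\in O(T_0)$ with $\phi\neq\pm\operatorname{id}$.

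Next, for each such $\phi$ I would show that $D^{\phi}:=\{\ell\in D_{T_0}:\ell\text{ is an eigenvector of }\phi_\CC\}$ is a proper closed analytic subset. As $\phi_\CC$ is a fixed endomorphism with finitely many eigenvalues $\lambda$, one has $D^{\phi}=\bigcup_\lambda\bigl(\PP(\ker(\phi_\CC-\lambda))\cap D_{T_0}\bigr)$. When $\phi\neq\pm\operatorname{id}$ no eigenspace equals all of $T_0\otimes\CC$, since a scalar isometry defined over $\mathbb{R}$ must be $\pm\operatorname{id}$; thus each $\ker(\phi_\CC-\lambda)$ is a proper subspace. Because $\rk T_0\ge 3$, the quadric $\{\ell^2=0\}$ is irreducible and spans $\PP(T_0\otimes\CC)$, so it lies in no proper linear subspace, while $D_{T_0}$ is a nonempty Euclidean-open, hence Zariski-dense and irreducible, subset of it. Therefore each $\PP(\ker(\phi_\CC-\lambda))\cap D_{T_0}$ is a proper closed subset, and since an irreducible space is not a finite union of proper closed subsets, $D^{\phi}$ is proper. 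This is exactly the step that collapses when $\rk T_0=2$ (the case $\rho=20$): there the quadric degenerates into two points, $D_{T_0}$ is zero-dimensional, and every such $X$ carries complex multiplication.

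Finally, since $O(T_0)$ is countable, $\bigcup_{\phi\neq\pm\operatorname{id}}D^{\phi}$ is a countable union of proper closed analytic subsets of $D_{T_0}$. A very general period $\sigma$ lies in its complement, so the only Hodge isometries of $T(X)$ are $\pm\operatorname{id}$; that is, $G_X=\{\pm\operatorname{id}\}$ and $X$ is $T$-general. I expect the main obstacle to be the geometric input of the third step---verifying that no eigenspace of a non-scalar $\phi$ can contain the period domain, which rests on the irreducibility and non-degeneracy of the isotropic quadric and is precisely where the hypothesis $\rk T_0\ge 3$, i.e. $\rho<20$, is indispensable.
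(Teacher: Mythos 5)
Your proof is correct, and it is essentially the standard period-domain countability argument: the paper itself gives no proof but delegates to \cite[Lemma 3.9]{SZ20} (going back to Oguiso), where the same reduction is used --- a non-scalar isometry of $T_0$ admits the period as an eigenvector only on a proper closed analytic subset of the period domain, $O(T_0)$ is countable, and the hypothesis $\rho<20$ enters exactly where you place it, to guarantee $\rk T_0\ge 3$ so that the isotropic quadric is irreducible and nondegenerate. Your write-up fills in the details the reference is relying on, with no gaps.
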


See Example \ref{ex:thechosenone}
for an explicit $T$-special K3 surface.

\subsection{C\u{a}ld\u{a}raru class for a non-fine moduli space}

The Brauer group of an elliptic K3 surface with a section is one of the main technical 
tools used in this paper. We follow the discussions in \cite{Gee05} and \cite{Cal00}.
For every complex K3 surface, we have a canonical
isomorphism 
\begin{equation}\label{eq:Br-T}
\operatorname{Br}(X) \simeq \operatorname{Hom}(T(X),\Q/\Z).
\end{equation}
In particular, $\Br(X)$ is an infinite torsion group
and
for all integers $t \ge 1$ we have
\begin{equation}\label{eq:Br-tors}
\Br(X)_{t-tors} \simeq \operatorname{Hom}(T(X),\Z/t\Z) \simeq (\Z/t\Z)^{22-\rho},
\end{equation}
where $\rho$ is the Picard number of $X$.

We explain the explicit description of the
Brauer class
associated to a
moduli space of sheaves on a K3 surface \cite{Muk87}, \cite{Cal00}.
Let $X$ be a complex K3 surface, 
and consider a
Mukai vector 
\[
v = (r, D, s) \in N(X) := \Z \oplus \NS(X) \oplus \Z.
\]
We assume that 
$v$ is a primitive vector such that
$v^2 = 0$, i.e. $D^2 = 2rs$.

Let $M$ be the moduli space
of stable sheaves on $X$
of class $v$.
By Mukai's results,
if $M$ is nonempty, then
it is again a K3 surface, see e.g. \cite[Corollary 3.5]{Huy16} (we assume $v$ is primitive, so stability coincides with semistability for a generic choice of a polarization). 
Let $t$ be the divisibility of $v$, that is
\[
t = \gcd_{u \in N(X)} u \cdot v = 
\gcd\left(r, s,
\gcd_{E \in \NS(X)} E \cdot D
\right).
\]
We consider
the obstruction
$\alpha_X \in \Br(M)$
for the existence of a universal sheaf on $X \times M$;
under the isomorphism \eqref{eq:Br-T}, we will equivalently consider $\alpha_X$
as a homomorphism $T(M) \to \Q/\Z$. 
If the divisibility of $v$
equals $t$, 
then $\alpha_X$ has order $t$ 
and we have
\[
0 \to T(X) \to T(M) \overset{\alpha_X}\to \Z/t\Z \to 0.
\]
Here $\Z/t\Z$ is the subgroup of $\Q/\Z$ generated by $1/t$.
Note that the $t = 1$ case corresponds to fine moduli spaces, in which case $T(X) \simeq T(M)$.
In general we have
\begin{equation}\label{eq:extension-map}
\Z/t\Z = T(M) / T(X) \subset T(X)^* / T(X) = A_{T(X)}.
\end{equation}
We call the image $w$
of $\ol{1}$
under \eqref{eq:extension-map}
the \emph{C\u{a}ld\u{a}raru class} of $M$ (or of $v$).
By construction, the C\u{a}ld\u{a}raru class
$w$ generates the isotropic
subgroup of $A_{T(X)}$
given by Lemma \ref{lem:overlatsubgrcorresp} 
corresponding to the overlattice $T(X) \subset T(M)$.

\begin{lemma}[\cite{Cal00}]
\label{lem:caldararu-class}
Under the isomorphism \eqref{eq:A-orthog},
the C\u{a}ld\u{a}raru class $w$ 
of the
Mukai vector $(r, D, s)$
of divisibility $t$
corresponds to $-\frac1{t}D$.
\end{lemma}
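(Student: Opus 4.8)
The statement is Căldăraru's computation of the Brauer class of a non-fine moduli space \cite{Cal00}; the plan is to recover it by realising the overlattice $T(X)\subset T(M)$ inside the Mukai lattice and reading off the gluing anti-isometry. I would work in the Mukai lattice $\tilde H(X,\Z) = U\oplus H^2(X,\Z)$, an even unimodular lattice of signature $(4,20)$, whose algebraic part is $N(X) = \Z\oplus\NS(X)\oplus\Z$ and whose transcendental part is $T(X)$, with $N(X)^\perp = T(X)$ and $T(X)^\perp = N(X)$. Because $\tilde H(X,\Z)$ is unimodular, the overlattice $N(X)\oplus T(X)\subset\tilde H(X,\Z)$ is the graph of an anti-isometry $\gamma\colon A_{N(X)}\xrightarrow{\sim} A_{T(X)}(-1)$ (Lemma \ref{lem:overlatsubgrcorresp}); since the summand $U$ is unimodular we have $A_{N(X)} = A_{\NS(X)}$, and this $\gamma$ is exactly the isomorphism \eqref{eq:A-orthog}. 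The divisibility hypothesis says precisely that $\{v\cdot u : u\in N(X)\} = t\Z$, so $\tfrac1t v\in N(X)^*$ and its class $\overline{\tfrac1t v}\in A_{N(X)}$ has order $t$.

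Next I would identify $T(M)$ as an overlattice of $T(X)$ inside $\tilde H(X,\Z)$. A quasi-universal family on $X\times M$ exists even when $M$ is non-fine, and Mukai's theory \cite{Muk87} (see also \cite{Huy16}) yields a Hodge isometry $v^\perp/\Z v\xrightarrow{\sim} H^2(M,\Z)$. Since $H^{2,0}(X) = \CC\sigma_X$ lies in $T(X)\otimes\CC\subset v^\perp\otimes\CC$, the period of $M$ lies in $\bar T\otimes\CC$, where $\bar T$ is the image of $T(X)$; as $\bar T$ has full transcendental rank it follows that $T(M)$ is the saturation of $\bar T$ in $v^\perp/\Z v$. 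Unwinding this, an integral class represents an element of $T(M)$ precisely when it has the form $\tau+\lambda v$ with $\tau\in T(X)^*$ and $\lambda\in\tfrac1t\Z$ satisfying the gluing relation $\bar\tau = \gamma(\overline{\lambda v})$; the subgroup $T(X)$ is cut out by $\lambda\in\Z$, and quotienting by $\Z v$ records $\lambda$ only modulo $\Z$. Thus $\overline{\tau+\lambda v}\mapsto\lambda$ defines a surjection $T(M)\to\tfrac1t\Z/\Z = \Z/t\Z$ with kernel $T(X)$ reproducing \eqref{eq:extension-map}, and the induced embedding $T(M)/T(X)\hookrightarrow A_{T(X)}$ sends $\overline{\tau+\lambda v}$ to $\bar\tau = \gamma(\overline{\lambda v})$.

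The computation of $w$ is now immediate. Its generator corresponds to $\lambda = 1/t$, giving $w = \gamma(\overline{\tfrac1t v})$; writing $\tfrac1t v = \tfrac rt(1,0,0)+\tfrac1t D+\tfrac st(0,0,1)$ and using $t\mid r$ and $t\mid s$, both $U$-components are integral, so $\overline{\tfrac1t v} = \overline{\tfrac1t D}$ in $A_{N(X)} = A_{\NS(X)}$. Hence, under \eqref{eq:A-orthog}, $w$ corresponds to $\pm\tfrac1t D$. The one genuinely delicate point is fixing this sign: two homomorphisms $T(M)\to\Q/\Z$ with kernel $T(X)$ and image $\tfrac1t\Z/\Z$ differ by a unit of $(\Z/t\Z)^*$, so I must verify that the Brauer obstruction class $\alpha_X$ is $\overline{\tau+\lambda v}\mapsto -\lambda$ rather than $\lambda$. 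This is precisely the content of Căldăraru's explicit computation of the obstruction to the universal sheaf \cite{Cal00}, which pins the relevant unit to $-1$ and yields $w\leftrightarrow -\tfrac1t D$. I expect this sign bookkeeping to be the main obstacle, the lattice identification of $T(M)$ and the evaluation of $\gamma(\overline{\tfrac1t v})$ being routine once the Mukai isometry is invoked.
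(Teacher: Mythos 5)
Your proposal is correct and takes essentially the same route as the paper: both identify $T(M)/T(X)$ inside $A_{T(X)}$ via Mukai's isometry $v^{\perp}/\Z v \simeq H^2(M,\Z)$ and the gluing data of the unimodular overlattice, reducing the answer to $\pm\frac{1}{t}D$, and both appeal to C\u{a}ld\u{a}raru's obstruction computation to fix the sign. The only difference is that you defer that sign verification to the citation, whereas the paper executes it in one line via \cite[Theorem 5.3.1]{Cal00} ($\alpha_X(x) = u\cdot x \bmod \Z$ for $u$ with $u\cdot v=1$), evaluated on the generator $\frac{1}{t}\lambda = \wt{a}-\frac{1}{t}v$ to obtain $-\frac{1}{t}$.
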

\begin{proof}
By \cite[Proposition 6.4(3)]{Muk87}, the cokernel of $i:T(X)\hookrightarrow T(M)$ is generated by $\frac{1}{t}\lambda$, where $\lambda\in T(X)$ is chosen such that $D+\lambda = ta$
for some $a \in H^2(X,\Z)$.
Here $D$ and $\lambda$
correspond to each other under 
the natural isomorphism \eqref{eq:A-orthog}:
\begin{equation*}
    \begin{array}{rl@{\qquad}lr}
         A_{T(X)}(-1)&\to H^2(X,\Z)/(T(X)\oplus \NS(X)) &\to A_{\NS(X)}  \\
         \frac{1}{t}\lambda  &\mapsto \;\;\;\;\;\;\;\;\;\;\frac{1}{t}(D+\lambda) = a &\mapsto \frac{1}{t}D.
    \end{array}
\end{equation*}
Furthermore the defining equation for $\lambda$ can be equivalently written 
in the full
integral
cohomology of $X$ as
\[
v + \lambda = t\wt{a}
\]
where $\wt{a} = (r/t, a, s/t)$
(this vector is integral). We claim that $-\frac1{t}\lambda$ is the C\u{a}ld\u{a}raru class of $(r,D,s)$. To show this, we compute the value of
the Brauer class $\alpha_X$,
considered as a map 
$T(M) \to \Q/\Z$
(with image $(\frac1{t}\Z)/\Z \simeq \Z/t\Z$),
on
the element $\frac{1}{t}\lambda \in T(M)$.
Set $u \in H^*(X, \Z)$
such that $u \cdot v = 1$
(this vector exists by unimodularity).
Then we have
\[
\alpha_X(\lambda/t) = 
u \cdot \lambda/t =
u \cdot (\wt{a}-v/t) =
 u \cdot \wt{a} -u \cdot v/t
\equiv
-1/t \pmod{\Z}.
\]
Here we used
\cite[Theorem 5.3.1]{Cal00}
in the first equality and
the definition of $\wt{a}$ in the second one.
Thus we have 
$w = -\frac{1}{t}\lambda$ by definition
of the C\u{a}ld\u{a}raru class
and the corresponding element in $A_{\NS(X)}$ is $-\frac1{t}D$.
\end{proof}

\subsection{Elliptic K3 surfaces} \label{sec:ellipticsurfaces}
Recall that an elliptic surface is a surface $X$ which admits a surjective morphism $f: X\to C$ where $C$ is a smooth curve, such that the fibres of $f$ are connected and the genus of the generic fibre is $1$ \cite[\S 10]{IS96}. 
Our elliptic surfaces will
be assumed to be relatively minimal, i.e. contain no $(-1)$-curves in the fibres of $f$; this is automatic for K3 surfaces.
We say that an elliptic surface is isotrivial if all smooth fibres are isomorphic. 

For an elliptic K3 surface we have the base $C \simeq \PP^1$.
There are two natural concepts of an isomorphism
between elliptic K3 surfaces
$f: X\to \PP^1$ and $\phi: Y\to \PP^1$. 

\begin{definition}\label{def:ellip-isom}
(1) The surfaces $X$, $Y$ are isomorphic as elliptic surfaces if there exists an isomorphism
$X \simeq Y$ preserving the fibre classes, or equivalently
there is a commutative diagram
\begin{equation}\label{eq:defellipisom}
    \xymatrix{
    X\ar[r]_{{\beta}}^-\sim \ar[d]_f & Y \ar[d]^\phi\\
    \PP^1 \ar[r]_{\ol{\beta}}^-\sim & \PP^1.
    }
\end{equation}
In this case, we say that the isomorphism $X\simeq Y$ twists the base by $\ol{\beta}.$

\noindent{(2)} The surfaces $X$ and $Y$
are isomorphic over $\PP^1$ if there is an isomorphism $X\simeq Y$ twisting the base by the identity, or equivalently if there exists a commutative
diagram
\begin{equation*}
    \xymatrix{
    X\ar[rr]^-\sim \ar[dr]_-f & & Y \ar[dl]^-\phi\\
    & \PP^1. &
    }
\end{equation*}
\end{definition}

Being isomorphic over $\PP^1$ is 
more restrictive than being isomorphic
as elliptic surfaces.
For example, for every $\ol{\beta} \in \Aut(\PP^1)$,
$f: X \to \PP^1$ and $\ol{\beta} f: X \to \PP^1$
are isomorphic as elliptic surfaces, but usually
not over $\PP^1$.

Let $S\to \PP^1$ be an elliptic K3 surface with a fixed section.
We denote by
$\operatorname{Aut}_{\PP^1}(S)$ (resp. $\Aut(S, F)$)
the group of
automorphisms of $S$ over $\PP^1$ (resp. automorphisms
of $S$ preserving the fibre class).
We have $\Aut_{\PP^1}(S) \subset \Aut(S, F)$. 
We denote by $A_{\PP^1}(S)$ (resp. $A(S,F)$) the group of automorphisms of $S$ over $\PP^1$ (resp. preserving the fibre class) which also preserve the zero-section. Such automorphisms will be called \textit{group automorphisms} (see e.g. \cite{DM22}). 

\begin{remark}\label{rem:Pic0-generic}
The category
of relatively
minimal
elliptic surfaces and their isomorphisms
over $\PP^1$
is equivalent
to the category of genus one curves over $\CC(t)$
and their isomorphisms.
The functor is given by taking the generic fibre.
This functor is an equivalence
e.g. by \cite[Theorem 7.3.3]{IS96} or \cite[Theorem 3.3]{DM22}.
\end{remark}

\section{Elliptic K3 surfaces of Picard rank 2}
\label{sec:ellK3-rk2}

\subsection{N\'eron-Severi lattices} \label{sec:prelimk3lattice}

We recall some
basic facts about elliptic K3 surfaces of Picard
rank 2 following \cite{Ste02, Gee05}.
Let $f: X\rightarrow \PP^1$ be a complex projective
elliptic K3 surface.
Let $F\in \operatorname{NS}(X)$ be the class of a fibre.
Recall that the multisection index $t$
of $f$
is the minimal positive $t > 0$
such that there exists a divisor $D \in \NS(S)$
with $D \cdot F = t$.

\begin{proposition}\label{prop:neronseveriranktwo} {\cite[Remark 4.2]{Gee05}, \cite[Lemma 3.3]{SZ20}}
Let $X$ be an elliptic K3 surface
of Picard rank 2. Then there exists a polarisation $H$ on $X$ such that $H,F$ form a basis of $\operatorname{NS}(X)$ and $H\cdot F = t$. In particular, the N\'eron-Severi lattice of $X$ is given by a matrix of the form 
\begin{equation} \label{matrix:lambda}
\left(\begin{matrix}2d&t\\t&0\end{matrix}\right).
\end{equation}
\end{proposition}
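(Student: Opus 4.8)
The plan is to build the basis directly from the intersection pairing with the fibre class $F$ and to repair ampleness only at the very end. Throughout I use that $F$ is a primitive isotropic nef class: one has $F^2=0$ since two general fibres are disjoint, $F$ is nef (indeed base-point free) as the pullback $f^*\bigo_{\PP^1}(1)$, and $F$ is primitive because an elliptic K3 surface has no multiple fibres.

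First I would analyse the homomorphism $\phi\colon\NS(X)\to\Z$, $D\mapsto D\cdot F$. Its image is a subgroup of $\Z$ whose minimal positive element is, by definition, the multisection index, so $\operatorname{Im}\phi=t\Z$ and there is some $H_0\in\NS(X)$ with $H_0\cdot F=t$. For the kernel, note that $\ker\phi=F^\perp\cap\NS(X)$ is a primitive (saturated) sublattice of rank one containing the primitive vector $F$, hence $\ker\phi=\Z F$. The sequence $0\to\Z F\to\NS(X)\xrightarrow{\phi}t\Z\to 0$ then splits because $t\Z$ is free, and the splitting determined by $H_0$ shows that $\{H_0,F\}$ is a $\Z$-basis of $\NS(X)$. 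In this basis the Gram matrix is $\left(\begin{smallmatrix}H_0^2&t\\ t&0\end{smallmatrix}\right)$, and since $\NS(X)$ is even we may write $H_0^2=2d$, giving the matrix \eqref{matrix:lambda}.

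It remains to arrange that the first basis vector is a polarisation. Every class with $F$-degree equal to $t$ has the form $H_0+nF$ (as $\ker\phi=\Z F$), and each such class together with $F$ is again a basis with the same shape of Gram matrix (now with $d$ replaced by $d+nt$). So I would set $H=H_0+nF$ and show it is ample for $n\gg 0$ via the Nakai--Moishezon criterion on a surface: I need $H^2>0$ and $H\cdot C>0$ for every irreducible curve $C$. The self-intersection $H^2=2d+2nt$ is positive for large $n$, and for large $n$ the class $H$ lies in the connected component $\mathcal P^+$ of the positive cone containing the ample classes.

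The hard part is controlling $H\cdot C=H_0\cdot C+n\,(F\cdot C)$ uniformly over all irreducible curves $C$, and this is the only place where the rank-two hypothesis is essential. If $F\cdot C=0$ then $C\in F^\perp=\Z F$, forcing $C^2=0$, so the only such irreducible curve is $F$ itself, for which $H\cdot F=t>0$; in particular all fibres are irreducible and there are no vertical $(-2)$-curves. For every other irreducible curve $F\cdot C\geq 1$. If moreover $C^2\geq 0$, then $C$ lies in the closed future cone $\overline{\mathcal P^+}$ (being effective and positive against an ample class), so $H\cdot C>0$ automatically once $H\in\mathcal P^+$; this single condition handles all such $C$ at once. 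The only remaining curves are the $(-2)$-curves, and in Picard rank two there are only finitely many (their classes solve $C^2=-2$ with $C\cdot F>0$, which has at most one solution); for each, $H\cdot C=H_0\cdot C+n\,(F\cdot C)\to+\infty$, so a single large $n$ makes $H$ positive against all of them simultaneously. This yields an ample $H$ with $H\cdot F=t$ completing the basis, and the matrix \eqref{matrix:lambda} follows; the crux is exactly this ampleness step, where the absence of vertical $(-2)$-curves is what lets the line $\{H_0+sF\}$ actually meet the ample cone.
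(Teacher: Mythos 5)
Your proof is correct, and it is essentially the standard argument: the paper itself gives no proof of this proposition, only citing \cite{Gee05} and \cite{SZ20}, where the same strategy appears (identify $\ker(-\cdot F)=\Z F$ using that $F$ is primitive isotropic and $F^\perp$ is saturated, split off a class $H_0$ of $F$-degree $t$, then translate by $nF$ into the ample cone, which in rank two only requires controlling the at most one effective $(-2)$-class of positive $F$-degree). Your handling of the ampleness step via Nakai--Moishezon, including the observation that there are no vertical $(-2)$-curves in Picard rank two, is accurate and complete.
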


We write $\Lambda_{d,t}$ for the lattice of rank 2 with matrix \eqref{matrix:lambda} with respect to some basis $H,F$.
It is easy to see that
the lattice $\Lambda_{d,t}$ has exactly 
two isotropic primitive vectors up to sign: one
is $F$, and the other is
\begin{equation}
F' = \frac1{\gcd(d,t)}(tH - dF).
\end{equation}

The following lemma describes
when the class $F'$ gives rise
to another elliptic fibration on $X$.

\begin{lemma}\cite[\S 4.7]{Gee05}\label{lem:twofibrations-numerical}
A K3 surface $X$ with $\NS(X)\simeq \Lambda_{d,t}$ has two elliptic fibrations if and only if $d\not\equiv -1\pmod t$. If $d\equiv-1\pmod t$, $X$ admits one elliptic fibration.  If $X$ is $T$-general, $t>2$ and $d\not\equiv-1\pmod t$, then the two fibrations are isomorphic (as elliptic surfaces) if and only if $d\equiv 1\pmod t$. 
\end{lemma}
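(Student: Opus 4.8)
The plan is to work entirely inside the lattice $\Lambda_{d,t}$ and its two primitive isotropic classes $F$ and $F'$, using that elliptic fibrations on a K3 surface correspond to primitive nef isotropic classes in $\NS(X)$ (the fibre class), and that isomorphisms are controlled by the Torelli Theorem \ref{thm:torelli}. The computation that drives everything is the determination of the $(-2)$-classes of $\Lambda_{d,t}$: writing $v=aH+bF$ one has $v^2=2a(da+tb)$, so $v^2=-2$ forces $a=\pm1$ and $da+tb=\mp1$, which is solvable over $\Z$ precisely when $d\equiv-1\pmod t$. Thus $\Lambda_{d,t}$ contains a $(-2)$-class if and only if $d\equiv-1\pmod t$.

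For the count of fibrations I would invoke the standard description of the nef cone of a Picard rank $2$ K3 surface: the positive cone has signature $(1,1)$, is bounded by the two isotropic rays $\mathbb{R}_{\ge0}F$ and $\mathbb{R}_{\ge0}F'$, and the nef cone is cut out of its closure by the effective $(-2)$-curves. If $d\not\equiv-1\pmod t$ there are no $(-2)$-classes, so the nef cone equals the closure of the positive cone; then both $F$ and $F'$ are nef primitive isotropic classes, each defining an elliptic fibration, and since $F\neq\pm F'$ these are two distinct elliptic pencils. If $d\equiv-1\pmod t$ there is a $(-2)$-class, unique up to sign; its effective representative $\delta$ satisfies $\delta\cdot F>0$ because $F$ is nef, and one computes $\delta\cdot F'=-t/\gcd(d,t)<0$. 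Hence $F'$ fails to be nef while $F$ remains nef, so $X$ carries a single elliptic fibration. This settles the first two assertions.

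For the last assertion I would first reduce isomorphism as elliptic surfaces to a discriminant condition. By Definition \ref{def:ellip-isom} the two fibrations are isomorphic as elliptic surfaces exactly when there is $\psi\in\Aut(X)$ with $\psi_*F=F'$. By the Torelli Theorem \ref{thm:torelli}, such $\psi$ corresponds to a Hodge isometry $\Psi$ of $H^2(X,\Z)$ preserving the ample cone; being Hodge, $\Psi$ preserves $T(X)$ and hence $\NS(X)$, so it splits as $g:=\Psi|_{\NS(X)}\in O(\Lambda_{d,t})$ and $h:=\Psi|_{T(X)}\in G_X$. Since $\NS(X)\oplus T(X)\hookrightarrow H^2(X,\Z)$ is the overlattice glued along the anti-isometry \eqref{eq:A-orthog}, the pair $(g,h)$ extends to an isometry of the unimodular lattice $H^2(X,\Z)$ precisely when $\bar g=\bar h$ on the discriminant, by Lemma \ref{lem:overlatsubgrcorresp}. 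Using $T$-generality (Definition \ref{def:Tgeneral}), $h=\pm\operatorname{id}$, so the condition becomes $\bar g=\pm\operatorname{id}$ on $A_{\NS(X)}$; conversely, since $d\not\equiv-1\pmod t$ the ample cone is the interior of the positive cone, so any $g$ with $g(F)=F'$ preserving the positive cone and inducing $\pm\operatorname{id}$ on $A_{\NS(X)}$ glues with the matching $\pm\operatorname{id}\in G_X$ to a Hodge isometry preserving the ample cone, hence comes from an automorphism. Thus the two fibrations are isomorphic as elliptic surfaces if and only if such a $g$ exists.

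It remains to decide when such a $g$ exists. Because $\pm F,\pm F'$ are the only primitive isotropic vectors, any $g$ preserving the positive cone with $g(F)=F'$ must satisfy $g(F')=F$, so $g$ is the unique ``swap'' $g(F)=F'$, $g(F')=F$. For the converse direction, if this $g$ is an isometry inducing $\pm\operatorname{id}$, then in particular $\bar g(\bar F)=\pm\bar F$, where $\bar F=[F/t]\in A_{\NS(X)}$ has order $t$; but $\bar g(\bar F)=[F'/t]$, and a short computation shows that $[F'/t]=\epsilon\,\bar F$ with $\epsilon\in\{\pm1\}$ forces $\gcd(d,t)=1$ together with $d\equiv-\epsilon\pmod t$. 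The value $d\equiv-1$ is excluded by the two-fibration hypothesis, leaving $d\equiv1\pmod t$, and the assumption $t>2$ guarantees $1\not\equiv-1\pmod t$ so that the two cases are genuinely distinct. For the forward direction, when $d\equiv1\pmod t$ the swap $g$ is integral (as $d^2\equiv1\pmod t$), and evaluating $\bar g$ on generators of the full group $A_{\NS(X)}$ shows $\bar g=-\operatorname{id}$; gluing with $-\operatorname{id}\in G_X$ produces the desired $\psi\in\Aut(X)$ with $\psi_*F=F'$. The main obstacle is precisely this last explicit discriminant computation: one must verify $\bar g=-\operatorname{id}$ on all of $A_{\NS(X)}$, not merely on $\bar F$ (which has order $t<t^2$), which requires identifying generators of $A_{\NS(X)}$ — whose structure depends on the parity of $t$ — and tracking the action of $g$; the hypotheses $t>2$ and $T$-generality are exactly what make the outcome clean.
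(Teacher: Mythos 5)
The paper offers no internal proof of this lemma --- it is quoted from \cite[\S 4.7]{Gee05} --- so there is nothing to compare line by line; your argument has to stand on its own. Its architecture is sound and is essentially the standard one: determine the $(-2)$-classes of $\Lambda_{d,t}$ (your computation $v^2=2a(da+tb)$, forcing $a=\pm1$ and $d\equiv-1\pmod t$, is correct), read off the nef cone in rank two, and then decide via the Torelli Theorem and Nikulin's gluing criterion whether the swap $F\leftrightarrow F'$ extends to an effective Hodge isometry. The first two assertions are fully proved; the only point you gloss over is that one should check that $F'$ with the chosen sign (rather than $-F'$) is the candidate lying in the closure of the positive cone, which follows from $F\cdot F'=t^2/\gcd(d,t)>0$ together with the nefness of $F$. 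The necessity direction of the last assertion is also complete and correct: $T$-generality forces $\bar g=\pm\operatorname{id}$, hence $\frac1tF'\equiv\pm\frac1tF$ in $A_{\NS(X)}$, which as you say forces $\gcd(d,t)=1$ and $d\equiv\mp1\pmod t$, and $t>2$ separates the two cases.

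The one genuine gap is that you stop short of the computation you yourself flag as the crux: verifying $\bar g=-\operatorname{id}$ on \emph{all} of $A_{\NS(X)}$ when $d\equiv1\pmod t$. That computation does close, and your worry about the parity of $t$ is unfounded, because the classes $H^*=\frac1tF$ and $F^*=-\frac{2d}{t^2}F+\frac1tH$ of \eqref{eq:FH-star} generate $A_{d,t}$ for every $t$, so it suffices to evaluate $\bar g$ on these two elements. Writing $d-1=ct$, one finds $g(F)=tH-dF$ and $g(H)=dH+\frac{1-d^2}{t}F$, which is integral since $t\mid d^2-1$, and one checks directly that $g$ preserves the Gram matrix. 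Then $g(H^*)=H-\frac{d}{t}F\equiv-dH^*\equiv-H^*$, and $g(F^*)+F^*=-cH+c^2F\in\Lambda_{d,t}$, so $\bar g(F^*)=-F^*$; hence $\bar g=-\operatorname{id}$, and gluing with $-\operatorname{id}\in G_X$ produces the required automorphism. With that paragraph supplied, your proof is complete.
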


We denote by $A_{d,t}$ the discriminant lattice of $\Lambda_{d,t}$
and we have
\begin{equation}\label{eq:disc-t2}
|A_{d,t}| = t^2.
\end{equation}
It is easy to compute
(see e.g. \cite[Proof of Lemma 3.2]{Ste02}) that the dual lattice $\Lambda_{d,t}^*$ 
is generated by 
\begin{equation}\label{eq:FH-star}
F^*=\frac{-2d}{t^2}F + \frac{1}{t}H, \quad H^*=\frac{1}{t}F
\end{equation}
so that the images of \eqref{eq:FH-star} generate $A_{d,t}$.
Furthermore for $a, b \in \Z$ we have
\begin{equation}\label{eq:dual-form}
q(aF^* + bH^*) = 
\frac{2a(bt-ad)}{t^2}.
\end{equation}

\begin{lemma}\label{lem:lambdabasics}
The discriminant group $A_{d,t}$
is isomorphic to $\Z/a\Z \oplus
\Z/b\Z$ with $a = \gcd(2d,t)$
and $b = t^2 / a$.
In particular, $A_{d,t}$ is cyclic if and only
if $\gcd(2d,t) = 1$.

Furthermore, if $\Lambda$ is a lattice in
the same genus as $\Lambda_{d,t}$
then $\Lambda \simeq \Lambda_{e,t}$
with $\gcd(2e,t) = \gcd(2d,t)$.
\end{lemma}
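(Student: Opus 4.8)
The plan is to treat the two assertions in turn, deriving the genus statement from the structural one. For the structure of the discriminant group I would compute the Smith normal form of the Gram matrix $\left(\begin{smallmatrix}2d&t\\t&0\end{smallmatrix}\right)$. Identifying $\Lambda_{d,t}^*$ with $\Z^2$ via the dual basis, the inclusion $\Lambda_{d,t}\hookrightarrow\Lambda_{d,t}^*$ is given by this matrix, so $A_{d,t}\simeq\Z^2/\left(\begin{smallmatrix}2d&t\\t&0\end{smallmatrix}\right)\Z^2$ is its cokernel, whose isomorphism type is read off from the invariant factors $a_1\mid a_2$. The first is the greatest common divisor of all entries, $a_1=\gcd(2d,t)$, and $a_1a_2$ equals the absolute value of the determinant, which is $t^2$ by \eqref{eq:disc-t2}; hence $a_2=t^2/\gcd(2d,t)$. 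Writing $a=\gcd(2d,t)$ and $b=t^2/a$ gives the claimed $A_{d,t}\simeq\Z/a\Z\oplus\Z/b\Z$, and one checks directly that $a\mid b$. Since $a\mid b$, the group is cyclic exactly when $a=1$, i.e. when $\gcd(2d,t)=1$, which is the cyclicity criterion.

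For the genus statement, let $\Lambda$ lie in the same genus as $\Lambda_{d,t}$. Then $\Lambda$ is even (by our standing convention), has signature $(1,1)$, and has discriminant group of order $t^2$, so $\det\Lambda=-t^2$. The key point is that $\Lambda$ contains a primitive isotropic vector. I would establish this elementarily rather than via Hasse--Minkowski: in any basis the Gram matrix has the shape $\left(\begin{smallmatrix}2\alpha&\beta\\\beta&2\gamma\end{smallmatrix}\right)$, so the associated binary form $2\alpha x^2+2\beta xy+2\gamma y^2$ has discriminant $4\beta^2-16\alpha\gamma=4(\beta^2-4\alpha\gamma)=4t^2$, a nonzero perfect square; such a form splits into rational linear factors and hence represents $0$ nontrivially over $\Q$. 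Clearing denominators and dividing out the greatest common divisor of the coordinates yields a primitive isotropic vector $v\in\Lambda$. Completing $v$ to a basis $\{v,w\}$ (possible since $v$ is primitive), the Gram matrix becomes $\left(\begin{smallmatrix}0&v\cdot w\\v\cdot w&w^2\end{smallmatrix}\right)$ with determinant $-(v\cdot w)^2=-t^2$, so $v\cdot w=\pm t$; replacing $w$ by $-w$ if necessary we may assume $v\cdot w=t$, and evenness gives $w^2=2e$. In the basis $\{w,v\}$ this is exactly the matrix \eqref{matrix:lambda}, so $\Lambda\simeq\Lambda_{e,t}$. Finally, applying the first part to both $\Lambda\simeq\Lambda_{e,t}$ and $\Lambda_{d,t}$ and matching the invariant factors of their isometric discriminant groups forces $\gcd(2e,t)=\gcd(2d,t)$.

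The routine parts are the Smith normal form computation and the sign and evenness bookkeeping in the basis completion. The one step that requires a genuine idea is producing a primitive isotropic vector in an arbitrary member of the genus, and I expect this to be the main obstacle; the cleanest resolution is the observation above that the relevant binary form automatically has square discriminant $4t^2$, which sidesteps any appeal to the local--global principle.
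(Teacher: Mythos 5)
Your proof is correct and follows essentially the same route as the paper: Smith normal form for the structure of $A_{d,t}$, then a primitive isotropic vector to put any lattice in the genus into the form $\Lambda_{e,t}$, then a comparison of discriminant groups. The only divergence is that where the paper cites \cite[Proposition 16]{HT14} for the existence of the primitive isotropic vector, you supply a correct self-contained argument via the observation that the associated binary form has square discriminant $4t^2$.
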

\begin{proof}
The first claim follows by putting $\Lambda_{d,t}$ into Smith normal form.

Let $\Lambda$ be a lattice in the same genus
as $\Lambda_{d,t}$.
Following the proof of \cite[Proposition 16]{HT14}, $\Lambda$ contains a primitive isotropic vector $v$. 
Hence, $\Lambda \simeq \Lambda_{e,s}$ for some $e,s \in \Z$, $s > 0$.
Comparing discriminant groups of $\Lambda_{d,t}$ and $\Lambda_{e,s}$
we obtain $t = s$ and $\gcd(2d,t) = \gcd(2e,s)$.
\end{proof}

\begin{example}\label{ex:Lambda-d0}
Let $d = 0$, then by Lemma
\ref{lem:lambdabasics},
$A_{0,t} \simeq \Z/t\Z \oplus \Z/t\Z$. Explicitly, generators
\eqref{eq:FH-star} of the dual lattice $\Lambda_{0,t}^*$
are $F^* = \frac1{t}H$
and $H^* = \frac1{t}F$
and their images in $A_{0,t}$
are the two order $t$ generators which are 
isotropic elements in $A_{0,t}$.
\end{example}

We introduce some properties of the discriminant groups which we will need to count
Fourier-Mukai partners. 

\begin{definition}\label{def:lagrangianldt}
We call an isotropic element of order $t$ in $A_{d,t}$ a Lagrangian element. 
We call a
cyclic isotropic subgroup $H\subseteq A_{d,t}$ of order $t$ a \textit{Lagrangian subgroup}.
\end{definition}

We denote by 
$\tLagr(A_{d,t})$
(resp. $\Lagr(A_{d,t})$)
the set of Lagrangian elements
(resp. Lagrangian
subgroups) of $A_{d,t}$. 
The main reason we are interested in studying Lagrangians of $A_{d,t}$ is their correspondence with Fourier-Mukai partners which we establish in Section \ref{sec:DEFM}.

\begin{proposition} \label{prop:lagrangiancount} 
Let $d,t$ be integers and let $m=\operatorname{gcd}(d,t)$. Then we have 
\begin{equation}
\label{eq:lagr-count}
|\tLagr(A_{d,t})| = \phi(t) \cdot 2^{\omega(m)}, \quad
|\Lagr(A_{d,t})| = 2^{\omega(m)}.
\end{equation}
\end{proposition}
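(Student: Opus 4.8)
The plan is to reduce both counts in \eqref{eq:lagr-count} to a single local computation at each prime dividing $t$.

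First I would record that $|\tLagr(A_{d,t})| = \phi(t)\,|\Lagr(A_{d,t})|$, so that the two formulas are equivalent and it suffices to prove $|\Lagr(A_{d,t})| = 2^{\omega(m)}$. Indeed, if $x$ is a Lagrangian element then $\overline q(x)=0$ in $\Q/2\Z$ forces $\overline q(kx)=k^2\overline q(x)=0$ for all $k$, so the cyclic group $\langle x\rangle$, which has order $t$, is isotropic and hence a Lagrangian subgroup; conversely every Lagrangian subgroup is cyclic of order $t$ and each of its $\phi(t)$ generators is a Lagrangian element. Thus $x\mapsto\langle x\rangle$ is a surjection $\tLagr\to\Lagr$ all of whose fibres have size $\phi(t)$.

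Next I would split the problem over primes. By \eqref{eq:A-primary} the discriminant form decomposes orthogonally as $A_{d,t}=\bigoplus_{p\mid t}A_{d,t}^{(p)}$, with $|A_{d,t}^{(p)}|=p^{2n_p}$ where $n_p=v_p(t)$. A cyclic subgroup $H$ of order $t$ decomposes as $H=\bigoplus_p H^{(p)}$ with $H^{(p)}$ cyclic of order $p^{n_p}$, and orthogonality makes $H$ isotropic if and only if each $H^{(p)}$ is; any such tuple reassembles to a Lagrangian subgroup. Hence $|\Lagr(A_{d,t})|=\prod_{p\mid t}|\Lagr(A_{d,t}^{(p)})|$, and since $\{p:p\mid t,\ p\mid d\}=\{p:p\mid m\}$, everything comes down to showing that each local factor equals $2$ when $p\mid d$ and $1$ when $p\nmid d$.

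The heart of the argument is therefore this local count. Fix $p\mid t$ and $n=n_p$. By Lemma \ref{lem:lambdabasics} one has $A_{d,t}^{(p)}\cong \Z/p^{a}\oplus\Z/p^{2n-a}$ with $a=v_p(\gcd(2d,t))$, and I would compute $\overline q$ on it explicitly from \eqref{eq:FH-star}--\eqref{eq:dual-form} (equivalently, by putting $\Lambda_{d,t}\otimes\Z_p$ in a $p$-adic normal form). The class $H^*=\tfrac1t F$ is always a Lagrangian element, so $\langle H^*\rangle$ supplies one Lagrangian subgroup and the factor is at least $1$; the task is to decide exactly which of the order-$p^n$ cyclic subgroups are isotropic. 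A case analysis on $v_p(d)$ relative to $n$ then yields: for $p\nmid d$ there is precisely one (for odd $p$ the form is cyclic and its unique order-$p^n$ subgroup is isotropic), while for $p\mid d$ a second isotropic direction appears -- morally the one carried by the other isotropic vector $F'=\tfrac1m(tH-dF)$ -- giving precisely two. Multiplying over $p\mid t$ gives $2^{\omega(m)}$.

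I expect this final case analysis to be the main obstacle. The group $A_{d,t}^{(p)}$ is genuinely non-cyclic in several regimes -- already for $p=2$, $p\nmid d$ one gets $\Z/2\oplus\Z/2^{2n-1}$ -- so one cannot simply invoke that a cyclic group has a unique subgroup of each order, but must instead compute $\overline q$ and test each order-$p^n$ cyclic subgroup for isotropy. The prime $p=2$ requires separate treatment because $v_2(\gcd(2d,t))$ behaves differently, and the intermediate range $0<v_p(d)<n$ must be handled on its own since there $A_{d,t}^{(p)}$ is neither cyclic nor of the form $(\Z/p^n)^2$. In every case, however, the count collapses to $1$ or $2$ exactly as claimed, which is what makes the clean product $2^{\omega(m)}$ emerge.
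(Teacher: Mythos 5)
Your proposal follows essentially the same route as the paper: reduce the element count to the subgroup count via the $\phi(t)$-to-one map $x\mapsto\langle x\rangle$, split $A_{d,t}$ into its $p$-primary orthogonal summands using the $p$-adic isometries $\Lambda_{d,t}\otimes\Z_p\simeq\Lambda_{d,p^{k_p}}\otimes\Z_p$, and then show locally that there are exactly one or two Lagrangian subgroups according to whether $p\nmid d$ or $p\mid d$. The local case analysis you defer is precisely the content of the paper's Lemmas \ref{lem:twolagrangians-equal} and \ref{lem:lagrangians-prime} (including the separate treatment of $p=2$ and the reduction of the case $v_p(d)\ge k_p$ to $d=0$), and your identification of the second isotropic direction with the class of $\tfrac1t F'$ matches the paper's.
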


Even though $\gcd(2d,t)$
is responsible for the structure of $A_{d,t}$,
it is
$\gcd(d,t)$ that 
appears in Proposition \ref{prop:lagrangiancount}.
For instance, 
if $d$ and $t$ are coprime and $t$ is even, the discriminant group $A_{d,t}$ is not cyclic, but $|\Lagr(A_{d,t})|=1$.

\begin{proof}
Any cyclic subgroup $H \subset A_{d,t}$
of order $t$
has $\phi(t)$ generators.
$H$ is a Lagrangian subgroup if and only if its generator is a Lagrangian element. Thus the two formulas
in \eqref{eq:lagr-count}
are equivalent, and it suffices to prove the second one.

Let $t=\prod_{p}p^{k_p}$ be the prime factorisation of $t$. 
For any prime $p$, we have an isomorphism
of $p$-adic lattices
$\Lambda_{d,t}\otimes \Z_p \simeq \Lambda_{d,p^{k_p}}\otimes \Z_p$ (the isometry is given by $H\mapsto H$ and $F\mapsto \alpha F$, where $\alpha$ is the unit  in $\Z_p$ given by $\alpha p^{k_p} = t$). 
By \cite[Proposition 1.7.1]{Nik80}, $A_{d,t}$ is isometric to the orthogonal direct sum of $A_{d, p^{k_p}}$ over all primes $p$.
Therefore we have $$|\Lagr(A_{d,t})| = \prod_{p}|\Lagr(A_{d,p^{k_p}})|.$$
Therefore we need to prove that $|\Lagr(A_{d,p^k})| = 1$ if $d$ is coprime to $p$ and $|\Lagr(A_{d,p^k})|=2$ otherwise. The result follows from Lemma \ref{lem:twolagrangians-equal} and Lemma \ref{lem:lagrangians-prime} below.
\end{proof}

\begin{lemma}\label{lem:twolagrangians-equal}
The elements
\begin{equation}\label{eq:v-vprime}
v = \frac{1}{t}F, \quad v' = \frac{1}{t}F'
\end{equation}
are primitive isotropic vectors in $\Lambda_{d,t}^*$
and their images $\ol{v}$ and $\ol{v'}$
in $A_{d,t}$ generate Lagrangian subgroups in $A_{d,t}$.
We have $\langle \ol{v} \rangle = \langle \ol{v'} \rangle$ if and only if $m\coloneqq\gcd(d,t)=1$, 
in which case 
\begin{equation}\label{eq:v'-dv}
\ol{v'} = -d\cdot\ol{v}    
\end{equation}

\end{lemma}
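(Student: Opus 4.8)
The plan is to work throughout in the dual basis $F^*, H^*$ of $\Lambda_{d,t}^*$ from \eqref{eq:FH-star}, and to realise the discriminant group concretely as $A_{d,t} = \Z^2/L$, where the coordinate pair $(a,b)$ stands for $aF^* + bH^*$ and $L$ is the image of $\Lambda_{d,t}$. Inverting \eqref{eq:FH-star} gives $F = tH^*$ and $H = tF^* + 2dH^*$, so that $L = \langle (t,2d),(0,t)\rangle$; as a sanity check $\det L = t^2 = |A_{d,t}|$, matching \eqref{eq:disc-t2}.

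First I would record that $v = \tfrac1t F = H^*$ is itself a basis vector of $\Lambda_{d,t}^*$, hence primitive, and that it is isotropic since $q(H^*)=0$ by \eqref{eq:dual-form} (take $a=0$). For $v'$, substituting $F' = \tfrac1m(tH - dF)$ together with the change of basis above yields $v' = \tfrac{t}{m}F^* + \tfrac{d}{m}H^*$. Because $m = \gcd(d,t)$, the coefficients $\tfrac{t}{m}$ and $\tfrac{d}{m}$ are coprime integers, so $v' \in \Lambda_{d,t}^*$ and it is primitive; isotropy $q(v')=0$ then follows either from \eqref{eq:dual-form} or directly from $q(F')=0$.

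Next I would compute the orders of $\ol{v}$ and $\ol{v'}$ in $A_{d,t} = \Z^2/L$. In coordinates $\ol{v} = (0,1)$ and $\ol{v'} = (\tfrac{t}{m},\tfrac{d}{m})$. For each element one solves $k\cdot(\text{coordinates}) \in L$: for $\ol{v}$ the first coordinate forces $x=0$ and one reads off order $t$; for $\ol{v'}$ the congruences, together with $\gcd(\tfrac{t}{m},\tfrac{d}{m})=1$, again give order exactly $t$. Hence both are isotropic of order $t$, i.e. Lagrangian elements, and they generate Lagrangian subgroups.

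Finally, for the comparison: since $\langle\ol{v}\rangle$ and $\langle\ol{v'}\rangle$ are both cyclic of order $t$, they are equal if and only if $\ol{v'} \in \langle\ol{v}\rangle$. Every element of $L$ has first coordinate divisible by $t$, so the membership $\ol{v'} = (\tfrac{t}{m},\tfrac{d}{m}) \equiv (0,k)$ forces $t \mid \tfrac{t}{m}$, i.e. $m=1$. Conversely, when $m=1$ one computes $\ol{v'} = (t,d) \equiv (t,d)-(t,2d) = -d\,\ol{v}$, which is \eqref{eq:v'-dv}; and as $\gcd(d,t)=1$ makes $-d$ a unit modulo $t$, the two cyclic groups then coincide. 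The only delicate point is the bookkeeping in the order computation for $\ol{v'}$ and in the reduction of $(\tfrac{t}{m},\tfrac{d}{m})$ modulo $L$; fixing the explicit presentation $A_{d,t}=\Z^2/L$ at the outset is what keeps both steps routine.
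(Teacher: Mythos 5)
Your proposal is correct and follows essentially the same route as the paper: the paper likewise reduces the question to whether $v' = \frac1m H - \frac{d}{tm}F$ is a multiple of $v = \frac1t F$ modulo $\Lambda_{d,t}$, leaving the primitivity/isotropy/order checks as ``a simple computation.'' Your explicit presentation $A_{d,t} = \Z^2/L$ with $L = \langle (t,2d),(0,t)\rangle$ is just a concrete bookkeeping device for that same argument, and all of your coordinate computations (including $v' = \tfrac{t}{m}F^* + \tfrac{d}{m}H^*$ and the reduction $(t,d)\equiv -d\,\ol{v}$ when $m=1$) check out.
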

\begin{proof}
The first part is a simple computation.
The corresponding Lagrangian subgroups are equal if and only if $v' = \frac{1}{tm}(tH-dF) = \frac{1}{m}H - \frac{d}{tm}F$ is a multiple of $v = \frac{1}{t}F$ modulo $\Lambda_{d,t}$. This is only the case when $m=1$.  
\end{proof}

\begin{lemma}\label{lem:lagrangians-prime}
Let $t = p^k$ with $p$ a prime number and $k\geq 1$. 
Then the subgroups $\langle \ol{v} \rangle, \langle \ol{v'} \rangle$  
are the only Lagrangian subgroups of $A_{d,t}$.
\end{lemma}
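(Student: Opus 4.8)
The plan is to translate Lagrangian subgroups into even unimodular overlattices of $\Lambda_{d,t}$ and then to reduce the count to an elementary bookkeeping of $p$-exponents. By the discussion preceding Lemma \ref{lem:overlatsubgrcorresp}, isotropic subgroups $H\subseteq A_{d,t}$ correspond bijectively to overlattices $\Lambda_{d,t}\subseteq L\subseteq \Lambda_{d,t}^*$, with $|H|=[L:\Lambda_{d,t}]$. A Lagrangian subgroup is a \emph{cyclic} isotropic subgroup of order $t=p^k$; since $|A_{d,t}|=t^2$, the associated overlattice satisfies $|A_L|=t^2/[L:\Lambda_{d,t}]^2=1$, so $L$ is even unimodular of signature $(1,1)$, hence $L\simeq U$. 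Thus I would count those even unimodular overlattices $L$ of $\Lambda_{d,t}$ for which, \emph{in addition}, the quotient $L/\Lambda_{d,t}$ is cyclic.

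First I would parametrise the even unimodular overlattices using isotropic lines. The space $\Lambda_{d,t}\otimes\Q$ has exactly two isotropic lines $\Q F$ and $\Q F'$, where $F'=\frac{1}{m}(tH-dF)$ and $m=\gcd(d,t)$, because $F,F'$ are the only primitive isotropic vectors up to sign. Any $L\simeq U$ has a basis of primitive isotropic vectors $e_1,e_2$ with $e_1\cdot e_2=1$, and being isotropic these must lie on the two lines, so $e_1=F/n_1$ and $e_2=F'/n_2$ for unique positive integers $n_1,n_2$. Using $F\cdot F'=t^2/m$ and $H=\frac{d}{t}F+\frac{m}{t}F'$, the requirements $L\simeq U$ and $\Lambda_{d,t}\subseteq L$ become the numerical conditions $n_1n_2=t^2/m$, $\,t\mid dn_1$ and $t\mid mn_2$; one checks $[L:\Lambda_{d,t}]=t$ is then automatic. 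This gives a bijection between valid pairs $(n_1,n_2)$ and even unimodular overlattices.

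Next I would impose cyclicity. Expressing $F$ and $H$ in the basis $e_1,e_2$ yields the relation matrix $\left(\begin{smallmatrix} n_1 & 0 \\ dn_1/t & mn_2/t \end{smallmatrix}\right)$ for $L/\Lambda_{d,t}$, so this quotient is cyclic if and only if the gcd of its entries is $1$. Writing $n_1=p^a$, $n_2=p^b$, $\delta=v_p(d)$ and $\mu=\min(\delta,k)=v_p(m)$, the conditions read $a+b=2k-\mu$ with $a,b\geq\max(0,k-\delta)$, and after the substitution $a'=a-\max(0,k-\delta)$, $b'=b-\max(0,k-\delta)$ one finds $a'+b'=\mu$ while cyclicity reduces precisely to $\min(a',b')=0$. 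Hence there is exactly one solution $(0,0)$ when $\mu=0$ (i.e.\ $p\nmid d$) and exactly two solutions $(0,\mu),(\mu,0)$ when $\mu\geq 1$ (i.e.\ $p\mid d$).

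Finally I would match these with $\langle\ol{v}\rangle$ and $\langle\ol{v'}\rangle$: the overlattices $\Lambda_{d,t}+\Z v$ and $\Lambda_{d,t}+\Z v'$ correspond to $n_1=t$ and $n_2=t$ respectively, which are exactly the two extremal solutions; since $\langle\ol{v}\rangle=\langle\ol{v'}\rangle$ precisely when $m=1$ by Lemma \ref{lem:twolagrangians-equal}, the count matches in both cases and these are the only Lagrangian subgroups. I expect the main obstacle to be the cyclicity bookkeeping: the overlattice correspondence only controls isotropic subgroups of order $t$, of which there can be several, and it is the cyclicity condition — encoded in the gcd of the relation matrix — that collapses the count to one or two. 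A pleasant feature is that this argument treats $p=2$ uniformly with odd primes, so the non-cyclicity of $A_{d,t}$ in the coprime even case requires no separate treatment.
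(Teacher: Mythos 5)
Your argument is correct, and it takes a genuinely different route from the paper. The paper works entirely inside the discriminant group: it identifies the $p^k$-torsion subgroup of $A_{d,p^k}$ explicitly (which forces a separate analysis for $p=2$, where that subgroup is strictly larger than $\langle \ol{v},\ol{v'}\rangle$), writes the discriminant form in the basis $\ol{v},\ol{v'}$, and classifies the isotropic elements of order $p^k$ by hand. You instead pass through Lemma \ref{lem:overlatsubgrcorresp}: a Lagrangian subgroup corresponds to an even unimodular overlattice $L\simeq U$ with cyclic quotient $L/\Lambda_{d,t}$, and such overlattices are rigidly parametrised by the divisibilities $n_1,n_2$ of $F,F'$ in $L$. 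I checked the bookkeeping: the conditions $n_1n_2=t^2/m$, $t\mid dn_1$, $t\mid mn_2$ and the cyclicity criterion $\gcd(n_1,\,dn_1/t,\,mn_2/t)=1$ do collapse to $\min(a',b')=0$ with $a'+b'=\mu=v_p(\gcd(d,t))$, giving one solution when $p\nmid d$ and two otherwise, and the extremal solutions are exactly $\Lambda_{d,t}+\Z v$ and $\Lambda_{d,t}+\Z v'$, matching Lemma \ref{lem:twolagrangians-equal} in both cases. What your approach buys is uniformity in $p$ (no separate $2$-adic case) and a cleaner structural explanation of why only two subgroups can occur; what it costs is the extra input that a rank-two even unimodular indefinite lattice is $U$, plus the cyclicity analysis via the relation matrix, which the paper's single-generator computation gets for free. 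Two points you should make explicit in a final write-up: $n_1,n_2$ are integers because $F,F'\in\Lambda_{d,t}\subseteq L$ while $e_1,e_2$ generate $L\cap\Q F$ and $L\cap\Q F'$; and the containment $L\subseteq\Lambda_{d,t}^*$ is automatic from integrality of the hyperbolic form on $L$ together with $\Lambda_{d,t}\subseteq L$, so your list of numerical conditions is indeed complete.
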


\begin{proof}
Write $d = \ell\cdot p^n$ for some $\ell\in \Z$ coprime to $p$ and some $n\geq 0$. Note that whenever $n\geq k$, we have $d\equiv 0\pmod {p^k}$, so that $\Lambda_{d,p^k}\simeq \Lambda_{0,p^k}$ and we can assume that $d = 0$. In this case we have $\ol{v'} = \ol{F^*}$ and it is easy to see that $\langle\ol{H^*}\rangle$ and $\langle\ol{F^*}\rangle$ are the only Lagrangian subgroups of $A_{0,p^k}$
(see Example \ref{ex:Lambda-d0}). 
Therefore we may assume $0\leq n < k$. 

In terms of generators
\eqref{eq:FH-star}
the quadratic form is given by 
\begin{equation}\label{eq:quadraticformoriginal}
q(aF^* + bH^*) = \frac{2a}{p^{2k-n}}\left(bp^{k-n}-a\ell\right). 
\end{equation}

To find all Lagrangian
subgroups, we start by describing
the subgroup of elements in $A_{d,t}$
having order dividing $t = p^k$.
We consider the vectors \eqref{eq:v-vprime} which in our case are given by
\[
\ol{v} = \frac{F}{p^k}, \quad
\ol{v'} = \frac{H}{p^n} - \frac{\ell F}{p^k}.
\]
Furthermore, the orders of $\ol{v}$
and $\ol{v'}$ are equal to $p^k$,
and these elements satisfy a relation
\begin{equation}\label{eq:u-v-rel}
p^n (\ell \ol{v} + \ol{v'}) = 0.    
\end{equation}
There are two cases to consider now.
If $p > 2$, then
\[
(A_{d,t})_{p^k-tors} = \left\langle \frac{F}{p^k},
\frac{H}{p^n} \right\rangle = \langle \ol{v}, \ol{v'} \rangle.
\]

The vectors $\ol{v}$ and
$\ol{v'}$ are isotropic
and
the discriminant form in terms
of 
these elements
equals
\[
\ol{q}(a\ol{v} + b\ol{v'}) = \frac{2ab}{p^n}.
\]
Hence an element $a \ol{v} + b \ol{v'}$ is isotropic if and only 
$p^n$ divides $ab$.
On the other hand, if
$a \ol{v} + b \ol{v'}$ has order precisely
$p^k$, then at least one of $a$
or $b$ is coprime to $p$.
Hence isotropic elements of $A_{d,t}$
of order $p^k$ are given by
\[
a \ol{v} + b p^{n + j} \ol{v'},
\quad
a p^{n + j} \ol{v} + b \ol{v'},
\]
with both $a$ and $b$ coprime to $p$
and $j \ge 0$. Using
\eqref{eq:u-v-rel}
we can rewrite these types of elements
as
\[
a' \ol{v},
\quad
b' \ol{v'},
\]
with $a'$ and $b'$ coprime to $p$. This finishes the proof in the $p > 2$ case.

If $p = 2$, then 
\[
\frac{1}{2^{n+1}}H\cdot F = \frac{2^k}{2^{n+1}} \quad\text{and}\quad \frac{1}{2^{n+1}}H^2 = 2\ell\cdot \frac{2^{n}}{2^{n+1}}
\]
are both integers. This means that $\frac{1}{2^{n+1}}H$ is an element of $A_{d,2^{k}}$ by \eqref{eq:characterisingduallattice}, and we have
\[
(A_{d,t})_{2^k-tors} = \left\langle \frac{F}{2^k},
\frac{H}{2^{n+1}} \right\rangle 
\supsetneq 
\langle \ol{v}, \ol{v'} \rangle =
\left\langle\frac{F}{2^k},
\frac{H}{2^{n}} \right\rangle.
\]
However, a simple computation shows that
all isotropic vectors are actually contained
in $\langle \ol{v}, \ol{v'} \rangle$
and the proof works in the same way as in the $p > 2$ case.
\end{proof}

Lemma \ref{lem:lagrangians-prime}
allows us to define a canonical involution
on the set of Lagrangian subgroups of $A_{d,t}$
as follows.
For $H \subset A_{d,t}$ a Lagrangian, we 
take its primary decomposition with respect
to \eqref{eq:A-primary}
\[
H = \bigoplus_p H_p, \quad H_p \subset A_{d,t}^{(p)}
\]
with each $H_p$ a Lagrangian in $A_{d,t}^{(p)}$.
We set $\iota_p(H_p)$ to denote the other
Lagrangian subgroup as determined by Lemma
\ref{lem:lagrangians-prime}; in the case $p$
does not divide $d$,
$\iota_p(H_p) = H_p$.
We set
\begin{equation}\label{eq:involution}
\iota(H) := \bigoplus_p \iota_p(H_p) \subset A_{d,t}.
\end{equation}
The geometric
significance of this involution is explained
in Theorem \ref{thm:derivedelliptic-lagrangianelts}. For now we note that
\begin{equation}\label{eq:iota-v-vprime}
\iota(\langle \ol{v} \rangle) = 
\langle \ol{v'} \rangle
\end{equation}
for $\ol{v}$, $\ol{v'}$ defined in Lemma \ref{lem:twolagrangians-equal}.

\subsection{Automorphisms and Hodge isometries}

Recall the Hodge isometry group $G_X$
defined in Section \ref{ssec:K3}.

\begin{lemma}\label{lem:orders-G}
If $X$ is a K3 surface of Picard rank 2, then $G_X$ is a cyclic group
of one of the following orders:
\[
2,
4,
6,
8,
10,
12,
22,
44,
50,
66.
\]
\end{lemma}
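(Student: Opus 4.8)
The plan is to reduce the statement to an elementary computation with Euler's totient function, relying on the structural facts about $G_X$ recalled in Section~\ref{ssec:K3}. From there, $G_X$ is cyclic, say $G_X \simeq \Z/2g\Z$ with $g \ge 1$, and $\phi(2g) \mid \rk T(X)$ by \cite[Appendix B]{HLOY02}. Since $X$ has Picard rank $2$, its transcendental lattice has rank $\rk T(X) = 22 - 2 = 20$. Thus the cyclicity is already granted, and the only thing left to prove is that the even integers $n = 2g$ satisfying $\phi(n) \mid 20$ are exactly $2, 4, 6, 8, 10, 12, 22, 44, 50, 66$.

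First I would list the divisors of $20$, namely $1, 2, 4, 5, 10, 20$, and observe that $\phi(n)$ must equal one of these. Since $\phi(n)$ is even for every $n > 2$, the value $\phi(n) = 5$ never occurs, and $\phi(n) = 1$ forces $n \in \{1, 2\}$; as $n = 2g$ is even and positive this yields $n = 2$. For each remaining admissible value of $\phi$ I would enumerate the preimages under $\phi$ and retain the even ones: $\phi(n) = 2$ gives $n \in \{3, 4, 6\}$, hence $4, 6$; $\phi(n) = 4$ gives $n \in \{5, 8, 10, 12\}$, hence $8, 10, 12$; $\phi(n) = 10$ gives $n \in \{11, 22\}$, hence $22$; and $\phi(n) = 20$ gives $n \in \{25, 33, 44, 50, 66\}$, hence $44, 50, 66$.

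To justify that these preimage lists are complete, I would use the standard fact that if a prime $p$ divides $n$ then $p - 1$ divides $\phi(n)$. In particular, whenever $\phi(n) \mid 20$ the prime divisors of $n$ lie among $\{2, 3, 5, 11\}$, since these are the only primes $p$ with $p - 1 \mid 20$. Bounding the exponents of these primes using $\phi(n) \le 20$ and the multiplicative formula $\phi(n) = \prod_p p^{k_p - 1}(p - 1)$ then leaves only finitely many candidates, each checked directly.

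The computation is routine; the only delicate point is completeness of the enumeration of $\phi^{-1}(20)$, where one might accidentally drop a solution such as $n = 50$ or $n = 66$. Restricting to the primes $\{2, 3, 5, 11\}$ and writing out the multiplicative formula makes this verification transparent, and combined with the cyclicity of $G_X$ recorded in Section~\ref{ssec:K3} it yields precisely the claimed list of orders.
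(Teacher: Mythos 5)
Your proposal is correct and follows essentially the same route as the paper: both take the cyclicity and the divisibility $\phi(2g)\mid\rk T(X)=20$ from \cite[Appendix B]{HLOY02} and then reduce to the elementary observation that the primes dividing $2g$ must lie in $\{2,3,5,11\}$, followed by a finite enumeration. Your organisation of the enumeration by the value of $\phi$ (rather than by maximal admissible prime powers) is slightly more explicit than the paper's, but the content is identical and your lists of preimages are complete.
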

\begin{proof}
The fact that $G_X$ is a finite cyclic group of even order $2g$ such that $\phi(2g)|\operatorname{rk}T(X)$ is proved in \cite[Appendix B]{HLOY02}.
We solve the equation $\phi(2g) \; | \; 20$.
Possible primes that can appear in the prime factorization of $2g$ are $2$, $3$, $5$, $11$. Maximal powers of these
primes such that $\phi(p^k) \; | \; 20$
are $2^3$, $3$, $5^2$, $11$
and the result follows by combining these or smaller
prime powers.
\end{proof}

\begin{proposition} \label{prop:autsequence}
Let $X$ be an elliptic
K3 surface of Picard rank 2. 
Then we have a canonical isomorphism
\begin{equation}\label{eq:eq-Aut-main}
\Aut(X) \simeq \Ker\left(G_X \to O(A_{T(X)})/O^+(\NS(X))\right),
\end{equation}
where $O^+(\NS(X))$ is the group of isometries of $\NS(X)$ that preserve the ample cone. In particular, $\operatorname{Aut}(X)$ is a finite cyclic group
and $|\Aut(X)| \le 66$.
Moreover, for any elliptic fibration $ X\to \PP^1$, the isomorphism above induces an isomorphism 
\begin{equation}\label{eq:eq-Aut-F}
\Aut(X,F) \simeq \Ker\left(G_X \to O(A_{T(X)})\right),
\end{equation}
where $\operatorname{Aut}(X,F)$ is the group of 
automorphisms which fix the fibre class $F$ of the elliptic fibration.
\end{proposition}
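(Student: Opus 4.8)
The plan is to deduce both isomorphisms from the Torelli Theorem (Theorem \ref{thm:torelli}) together with the gluing description of $H^2(X,\Z)$ as an overlattice of $\NS(X)\oplus T(X)$. By Torelli, $\Aut(X)$ is identified, via $f\mapsto f_*$, with the group of Hodge isometries of $H^2(X,\Z)$ preserving the ample cone. Any Hodge isometry $\psi$ of $H^2(X,\Z)$ preserves the sublattice of algebraic classes $\NS(X)$ and its orthogonal complement $T(X)$, so it restricts to an isometry $\psi_{\NS}\in O(\NS(X))$ and to a Hodge isometry $\psi_T\in G_X$. The first step is therefore to record the canonical restriction map $f\mapsto\bigl(f_*|_{\NS(X)},\,f_*|_{T(X)}\bigr)$ and to pin down its image.

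The key step is the compatibility condition on discriminant groups. Since $H^2(X,\Z)$ is unimodular, it is the overlattice of $\NS(X)\oplus T(X)$ whose glue $H_{H^2(X,\Z)}\subset A_{\NS(X)}\oplus A_{T(X)}$ is the graph of the isometry $\gamma\colon A_{\NS(X)}\xrightarrow{\sim}A_{T(X)}(-1)$ of \eqref{eq:A-orthog}. Applying Lemma \ref{lem:overlatsubgrcorresp} with base lattice $\NS(X)\oplus T(X)$ and $L=M=H^2(X,\Z)$, a pair $\psi_{\NS}\oplus\psi_T$ extends to a (necessarily unique, and automatically Hodge) isometry of $H^2(X,\Z)$ if and only if it preserves this glue, which under $\gamma$ is the single condition $\overline{\psi_{\NS}}=\overline{\psi_T}$ in $O(A_{T(X)})$. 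The extension preserves the ample cone exactly when $\psi_{\NS}\in O^+(\NS(X))$, since the ample cone lies in $\NS(X)\otimes\R$ and is untouched by $\psi_T$. Thus $\Aut(X)$ is identified with the set of pairs $(\psi_{\NS},\psi_T)\in O^+(\NS(X))\times G_X$ satisfying $\overline{\psi_{\NS}}=\overline{\psi_T}$, and projecting to $G_X$ realises its image as $\{\psi_T: \overline{\psi_T}\in\Image(O^+(\NS(X))\to O(A_{T(X)}))\}=\Ker\bigl(G_X\to O(A_{T(X)})/O^+(\NS(X))\bigr)$.

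It remains to see that $f\mapsto f_*|_{T(X)}$ is injective; this is the main obstacle, and the only place where $\rho=2$ is used. An automorphism in the kernel acts trivially on $T(X)$, hence fixes the period $\sigma\in T(X)\otimes\CC$ and is symplectic. By Nikulin's theory of symplectic automorphisms, the co-invariant lattice of a non-trivial finite symplectic automorphism is negative definite of rank at least $8$ and is contained in $\NS(X)$; since $\rho(X)=2<8$, no such automorphism exists, so the kernel is trivial. This proves \eqref{eq:eq-Aut-main}. In particular $\Aut(X)$ embeds into the cyclic group $G_X$, hence is cyclic, and $|\Aut(X)|\le|G_X|\le 66$ by Lemma \ref{lem:orders-G}.

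Finally, for \eqref{eq:eq-Aut-F} I would make a direct computation in $\NS(X)\simeq\Lambda_{d,t}$ (Proposition \ref{prop:neronseveriranktwo}). Writing an isometry $g$ with $g(F)=F$ as $g(H)=aH+bF$, the conditions $g(H)\cdot F=H\cdot F$ and $g(H)^2=H^2$ force $a=1$ and then $bt=0$, so $b=0$ and $g=\operatorname{id}$; that is, the identity is the only isometry of $\Lambda_{d,t}$ fixing $F$. Hence $f\in\Aut(X,F)$ forces $f_*|_{\NS(X)}=\operatorname{id}$, whence $\overline{f_*|_{T(X)}}=\operatorname{id}$ and $f_*|_{T(X)}\in\Ker(G_X\to O(A_{T(X)}))$. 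Conversely every such $\psi_T$ pairs with $\psi_{\NS}=\operatorname{id}\in O^+(\NS(X))$ to yield an automorphism fixing $F$, so the isomorphism of \eqref{eq:eq-Aut-main} restricts to the asserted isomorphism $\Aut(X,F)\simeq\Ker(G_X\to O(A_{T(X)}))$.
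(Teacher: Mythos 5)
Your argument follows the same skeleton as the paper's: Torelli identifies $\Aut(X)$ with ample-cone-preserving Hodge isometries of $H^2(X,\Z)$, and the gluing criterion (Lemma \ref{lem:overlatsubgrcorresp} applied to $\NS(X)\oplus T(X)\hookrightarrow H^2(X,\Z)$, equivalently Nikulin's Corollary 1.5.2 as cited in the paper) identifies these with pairs $(\psi_{\NS},\psi_T)\in O^+(\NS(X))\times G_X$ with $\overline{\psi_{\NS}}=\overline{\psi_T}$; your treatment of \eqref{eq:eq-Aut-F} via the elementary computation that only the identity isometry of $\Lambda_{d,t}$ fixes $F$ is also essentially the paper's. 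The one step where you genuinely diverge is the injectivity of $f\mapsto f_*|_{T(X)}$: the paper proves that $O^+(\NS(X))\to O(A_{T(X)})$ is injective by determining $O^+(\NS(X))$ explicitly (it is trivial or $\Z/2\Z$ swapping $F$ and $F'$, and the swap is detected on $A_{T(X)}$ because $\tfrac1t F$ and $\tfrac1t F'$ have distinct images), whereas you invoke Nikulin's bound that the coinvariant lattice of a non-trivial \emph{finite} symplectic automorphism has rank at least $8$. Your route is sound in substance but has a small gap as written: you apply a theorem about finite-order symplectic automorphisms without checking that an automorphism acting trivially on $T(X)$ has finite order (symplectic automorphisms of projective K3 surfaces can have infinite order in general). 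Here it does, e.g.\ because such an automorphism acts on the rank-two lattice $\NS(X)$ permuting the four primitive isotropic vectors $\pm F,\pm F'$, which span $\NS(X)\otimes\Q$, so its action on $H^2(X,\Z)$ has finite order and Torelli gives finiteness of the automorphism itself; add a sentence to this effect (or simply run the paper's explicit analysis of $O^+(\NS(X))$, which is needed anyway to make the quotient in \eqref{eq:eq-Aut-main} concrete). With that patch your proof is complete; the trade-off is that the paper's argument stays inside elementary rank-two lattice computations, while yours imports the heavier machinery of symplectic automorphisms but avoids classifying $O^+(\NS(X))$.
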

\begin{proof}
By the Torelli Theorem \ref{thm:torelli}, 
there is a bijection 
between automorphisms of $X$
and Hodge isometries 
of $H^2(X,\Z)$ 
which preserve the ample cone.
Using 
\cite[Corollary 1.5.2]{Nik80},
we can write
\begin{equation}\label{eq:Aut-Torelli}
\Aut(X) \simeq \left\{(\sigma,\tau)\in G_X\times O^+(\NS(X))\;\mid\;\ol{\sigma}=\ol{\tau}\in O(A_{T(X)})\right\}.
\end{equation}
This isomorphism induces a surjective map $(\sigma, \tau) \mapsto \sigma$
\begin{equation}\label{eq:Autnew}
\Aut(X)\to\Ker\left(G_X\to O(A_{T(X)})/O^+(\NS(X))\right).
\end{equation}
The kernel of this map consists of the pairs $(\operatorname{id}_{T(X)},\tau)\in G_X\times O^+(\NS(X))$ such that $\ol{\tau}=\operatorname{id}_{A_{T(X)}}.$

We claim that
the homomorphism $O^+(\NS(X)) \to O(A_{T(X)})$ is injective. 
Since $\NS(X)$ contains
four isotropic vectors $\pm F, \pm F'$,
and $-1 \in O(\NS(X))$ never preserves
the ample cone, 
we note that $O^+(\NS(X))$ must be either trivial,
or isomorphic to 
$\Z/2\Z$ with non-trivial
element swapping $F$ with $F'$. 
The latter case is only possible
when $F'$ represents
a class
of an elliptic fibration on $X$,
which by Lemma \ref{lem:twofibrations-numerical}
corresponds to the case
$d \not\equiv -1 \pmod{t}$.
Then $\frac1{t}F$ and $\frac1{t}F'$
represent distinct
classes in $A_{T(X)}$ (see \eqref{eq:v'-dv})
and the element of $O^+(\NS(X))$
swapping $F$ and $F'$ has a non-trivial
image in $O(A_{T(X)})$.
Thus, since $O^+(\NS(X)) \to O(A_{T(X)})$
is injective, the map \eqref{eq:Autnew} is a bijection.
The claim about the isomorphism
type of $|\Aut(X)|$
follows from Lemma \ref{lem:orders-G}.
For the last statement, note that the only element of $O^+(\NS(X))$ 
which fixes $F$ is the identity. Therefore, 
\eqref{eq:eq-Aut-F} also follows from 
\eqref{eq:Aut-Torelli}.
\end{proof}

\begin{example}
Let $X$ be an elliptic K3 surface
with $\NS(X)\simeq \Lambda_{d,t}$
and assume that
$\gcd(2d,t)=1$.
In this case $A_{d,t}$ is cyclic of order $t^2$
by Lemma \ref{lem:lambdabasics}. An isometry $\sigma\in O(A_{d,t})$ is given by multiplication by a unit $\alpha\in \Z/t^2\Z$ with $\alpha^2 \equiv 1 \pmod t$, so
that the group $O(A_{d,t})$ is $2$-torsion. 
Thus by Proposition \ref{prop:autsequence}, $\Aut(X) \subset G_X$
is a cyclic subgroup of index one or two.
\end{example}

\begin{lemma} \label{lem:autgroupwithsection}
Let $S$ and $S'$ be K3 surfaces of Picard rank 2 which admit elliptic fibrations with a section. Then every Hodge isometry
between $T(S)$ and $T(S')$ lifts to a unique
isomorphism between $S$ and $S'$.
In particular, we have $\Aut(S) \simeq G_S$.
Finally, $S$ admits a unique
elliptic fibration with a unique section, hence
every automorphism of $S$ is a group automorphism.
\end{lemma}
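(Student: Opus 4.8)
The plan is to reduce everything to the observation that an elliptic K3 surface with a section has unimodular N\'eron--Severi lattice, and then to assemble the Torelli Theorem~\ref{thm:torelli} with the results already established for $\Lambda_{d,t}$. First I would record that a section meets a general fibre in a single point, so the multisection index is $t=1$. Hence $\NS(S)\simeq\Lambda_{d,1}$ has discriminant $|A_{d,1}|=t^2=1$ by \eqref{eq:disc-t2}, i.e. $\NS(S)$ is unimodular; passing from the basis $H,F$ to $H-dF,F$ identifies $\Lambda_{d,1}$ with $U$. By \eqref{eq:A-orthog} the transcendental lattice $T(S)$ is unimodular as well, so both summands of the primitive inclusion $T(S)\oplus\NS(S)\hookrightarrow H^2(S,\Z)$ are unimodular and the inclusion is an equality: $H^2(S,\Z)=T(S)\oplus\NS(S)$ is an orthogonal direct sum, and likewise for $S'$.

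For the existence of the lift, given a Hodge isometry $\psi\colon T(S)\to T(S')$ I would pick any isometry $\phi\colon\NS(S)\to\NS(S')$ (both are isometric to $U$) and form $\psi\oplus\phi\colon H^2(S,\Z)\to H^2(S',\Z)$. This is automatically a Hodge isometry, since $\psi$ carries $H^{2,0}(S)\subset T(S)\otimes\CC$ to $H^{2,0}(S')$ and the N\'eron--Severi parts are of type $(1,1)$. Because ample classes lie in the N\'eron--Severi lattice, I can then correct the N\'eron--Severi component by $\pm\operatorname{id}$ and by reflections in $(-2)$-classes, using that the Weyl group acts simply transitively on the chambers of the positive cone, so that the resulting Hodge isometry sends the ample cone of $S$ to the ample cone of $S'$. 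Crucially these corrections are supported on $\NS(S')$ and leave the restriction to $T(S)$ equal to $\psi$; the Torelli Theorem~\ref{thm:torelli} then produces an isomorphism $f\colon S\to S'$ with $f_*|_{T(S)}=\psi$.

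For uniqueness, and for the identity $\Aut(S)\simeq G_S$, I would invoke Proposition~\ref{prop:autsequence}: since $A_{T(S)}=0$ the group $O(A_{T(S)})/O^+(\NS(S))$ is trivial, so $\Aut(S)\simeq G_S$ via $f\mapsto f_*|_{T(S)}$. Two lifts $f_1,f_2$ of $\psi$ then differ by the automorphism $g=f_2^{-1}f_1$ with $g_*|_{T(S)}=\operatorname{id}$, which maps to $\operatorname{id}\in G_S$ under this isomorphism and is therefore the identity, giving $f_1=f_2$.

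Finally, for the geometric statement I would apply Lemma~\ref{lem:twofibrations-numerical} with $t=1$: the congruence $d\equiv-1\pmod 1$ holds trivially, so $S$ carries a unique elliptic fibration and in particular every automorphism fixes the fibre class $F$. The Shioda--Tate formula $\rho(S)=2+\sum_v(m_v-1)+\operatorname{rk}\operatorname{MW}(S)$ shows that $\rho=2$ forces all fibres to be irreducible and the Mordell--Weil rank to vanish; since the trivial lattice $\langle O,F\rangle$ already has discriminant $-1$ it equals $\NS(S)$, whence $\operatorname{MW}(S)=0$ and the section is unique. Every automorphism therefore preserves both the fibre class and the zero-section, i.e. is a group automorphism. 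The step requiring the most care is the ample-cone adjustment in the existence argument, ensuring the extension preserves the ample cone without altering $\psi$; this is a routine use of the Weyl group action, and the input needed for the last claim is the Shioda--Tate computation, while everything else is formal once unimodularity is in place.
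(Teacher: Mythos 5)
Your proof is correct and follows essentially the same route as the paper: unimodularity of $\NS(S)\simeq U$, extension of the Hodge isometry to $H^2$ preserving ample cones plus the Torelli Theorem for existence, Proposition~\ref{prop:autsequence} (with $A_{T(S)}=0$) for uniqueness and $\Aut(S)\simeq G_S$, and Lemma~\ref{lem:twofibrations-numerical} for the unique fibration. Your Shioda--Tate computation is just a more explicit version of the paper's one-line observation that $\NS(S)=U$ contains a unique $(-2)$-class meeting the fibre with multiplicity one.
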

\begin{proof}
By Proposition \ref{prop:neronseveriranktwo}
we have $\NS(S)\simeq \Lambda_{d,1}$,
which is isomorphic to the 
hyperbolic lattice $U$,
in particular
$\NS(S)$ is unimodular and 
$A_{\NS(S)} =0$.
If there is a Hodge isometry between
$T(S)$ and $T(S')$, 
extending it to a Hodge isometry
between $H^2(S, \Z)$ and $H^2(S', \Z)$
preserving the ample cones,
we obtain $S \simeq S'$,
by the Torelli Theorem,
as in the proof of 
Proposition \ref{prop:autsequence}.
Thus we may assume that $S = S'$
in which case the result follows 
Proposition \ref{prop:autsequence}.

By Lemma \ref{lem:twofibrations-numerical},
$S$ admits a unique elliptic fibration.
Since $\NS(S) = U$,
there is a unique $(-2)$-curve which intersects the fibres of the elliptic fibration
with multiplicity $1$, i.e. a unique section.
\end{proof}

\begin{example} \label{ex:thechosenone}
Let $S\to \PP^1$ be the elliptic K3 surface with a section given by the Weierstrass equation $y^2 = x^3 + t^{12} - t$. This surface is isotrivial with $j$-invariant $0$. It was studied in \cite{Keu16} and \cite{Kon92}. We have $\rk \NS(S) = 2$, and $S$ is $T$-special. In fact, the group $G_S$ is cyclic of order $66$, and $S$ is unique with this property. Furthermore, 
$\Aut(S) \simeq \Z/66\Z$ by Lemma \ref{lem:autgroupwithsection}. 
The action of the
subgroup $\Z/6\Z \subset \Aut(S)$ 
commutes with projection to $\PP^1$ and rescales
$x$ and $y$ coordinates, and
the subgroup $\Z/11\Z \subset \Aut(S)$ preserves the fibre class $F \in \NS(S)$ and induces an order $11$
automorphism $t \mapsto \zeta_{11} t$
on $\PP^1$.
\end{example}

\begin{corollary} \label{cor:autfixingfibre}
Let $X$ be a T-general elliptic K3 surface of Picard rank 2 and multisection index $t>2$, then $\operatorname{Aut}(X,F) = \left\{\operatorname{id}\right\}.$
\end{corollary}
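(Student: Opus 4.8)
The plan is to feed the hypotheses directly into the isomorphism \eqref{eq:eq-Aut-F} of Proposition \ref{prop:autsequence}, which identifies $\Aut(X,F)$ with $\Ker\left(G_X \to O(A_{T(X)})\right)$. Since $X$ is $T$-general, Definition \ref{def:Tgeneral} gives $G_X = \{\pm\operatorname{id}\}$, so $\Aut(X,F)$ is a subgroup of a group of order $2$ and is therefore either trivial or all of $G_X$. The whole question thus reduces to deciding whether the non-trivial element $-\operatorname{id}_{T(X)}$ lies in the kernel. Its image in $O(A_{T(X)})$ is the induced isometry $-\operatorname{id}_{A_{T(X)}}$, so this element lies in the kernel precisely when $-\operatorname{id} = \operatorname{id}$ on $A_{T(X)}$, that is, when the discriminant group $A_{T(X)}$ is annihilated by $2$.

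It therefore suffices to show that $A_{T(X)}$ is not $2$-torsion when $t > 2$. By the isomorphism \eqref{eq:A-orthog}, the underlying abelian group of $A_{T(X)}$ coincides with that of $A_{\NS(X)} = A_{d,t}$, which by Lemma \ref{lem:lambdabasics} is isomorphic to $\Z/a\Z \oplus \Z/b\Z$ with $a = \gcd(2d,t)$ and $b = t^2/a$, of order $t^2$ by \eqref{eq:disc-t2}. Since $ab = t^2 > 4$ for $t > 2$, at least one of $a$, $b$ exceeds $2$, so $A_{d,t}$ contains an element of order greater than $2$ and is not annihilated by $2$. Hence $-\operatorname{id}_{A_{T(X)}} \neq \operatorname{id}$, the non-trivial element of $G_X$ does not lie in the kernel, and we conclude $\Aut(X,F) = \{\operatorname{id}\}$.

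There is no serious obstacle here; the computation is routine once Proposition \ref{prop:autsequence} is in hand. The only points requiring care are the bookkeeping that the map $G_X \to O(A_{T(X)})$ carries $-\operatorname{id}_{T(X)}$ to $-\operatorname{id}_{A_{T(X)}}$ rather than to something subtler, and the observation that ``$2$-torsion'' is exactly the condition forcing $-\operatorname{id}$ to act trivially. One should also record that the bound $t > 2$ is sharp: for $t = 2$ we have $A_{d,2} \simeq \Z/2\Z \oplus \Z/2\Z$, which is $2$-torsion, so a $T$-general surface with $t = 2$ yields $\Aut(X,F) = G_X \simeq \Z/2\Z$.
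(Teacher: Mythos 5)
Your proof is correct and follows essentially the same route as the paper: identify $\Aut(X,F)$ with $\Ker(G_X \to O(A_{T(X)}))$ via Proposition \ref{prop:autsequence}, use $T$-generality to reduce to whether $-\operatorname{id}$ acts trivially on $A_{T(X)}$, and rule this out because $|A_{T(X)}| = t^2 > 4$. You merely make explicit (via Lemma \ref{lem:lambdabasics}) the step the paper leaves terse, namely why a group of order $t^2$ with two cyclic factors cannot be $2$-torsion, and your remark on the sharpness of $t>2$ is a correct bonus.
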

\begin{proof}
By Proposition \ref{prop:autsequence}, there is an isomorphism $\Aut(X,F)\simeq \operatorname{Ker}(G_X\to O(A_{T(X)})).$ 
We have $G_X = \{\pm 1\}$ by assumption. 
Since
$t > 2$, and 
$A_{T(X)}$ has order $t^2$, we see that
$-1$ acts non-trivially on $A_{T(X)}$.
Thus $\Ker(G_X\to O(A_{T(X)}))$ is trivial.
\end{proof}

\section{Jacobians}
\label{sec:Jacobians}

\subsection{Ogg-Shafarevich Theory} 
\label{sec:brauerintro}

Given an elliptic K3 surface $f:X \to \PP^1$
and $k \in \Z$
we can define an elliptic K3 surface $\Jac^k(f): \Jac^k(X) \to \PP^1$, called the $k$-th Jacobian of $X$,
as the moduli space 
of sheaves supported at the fibres of $f$
and having degree $k$ 
\cite[Chapter 11]{Huy16}.
In particular, we have $S := \Jac^0(X)$ which
is an elliptic K3 surface with a distinguished section.

In what follows, we sometimes write $C$, $C'$ for bases of elliptic fibrations when they are not canonically isomorphic.

\begin{lemma} \label{lem:functorialjaczero}
Let $X\to C$ and $X'\to C'$ be elliptic K3 surfaces with zeroth Jacobians $S\to C$ and $S'\to C'$, respectively. Then an isomorphism of elliptic surfaces $\gamma:X\simeq X'$ which twists the base by $\ol{\beta}:C\to C'$ (see Definition \ref{def:ellip-isom}), induces a group isomorphism $\Jac^0(\gamma):S\simeq S'$ twisting the base by $\ol{\beta}$.
\end{lemma}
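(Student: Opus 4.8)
The plan is to construct the isomorphism $\Jac^0(\gamma)$ directly from the geometric description of the zeroth Jacobian as a moduli space, exploiting the fact that an isomorphism of elliptic surfaces must carry the fibre-structure of $X$ to that of $X'$ compatibly with the base twist. Recall that $S = \Jac^0(X)$ parametrises degree-zero sheaves supported on the fibres of $f\colon X \to C$; equivalently, over the generic point this is the Jacobian of the genus-one generic fibre, and the whole construction is functorial in the elliptic surface with respect to isomorphisms over the base. Since $\gamma\colon X \simeq X'$ twists the base by $\ol{\beta}\colon C \to C'$, I would first reduce to the case of an isomorphism \emph{over} the base by composing with the automorphism of $X'$ coming from $\ol{\beta}^{-1}$, or more cleanly by working fibrewise: for each $c \in C$, $\gamma$ restricts to an isomorphism of the fibre $X_c$ with the fibre $X'_{\ol{\beta}(c)}$.

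The key steps I would carry out in order are as follows. First, I would use Remark~\ref{rem:Pic0-generic}: the category of relatively minimal elliptic surfaces with isomorphisms over the base is equivalent to the category of genus-one curves over $\CC(t)$, via the generic fibre functor. An isomorphism $\gamma$ twisting the base by $\ol{\beta}$ induces, after identifying function fields through $\ol{\beta}^*$, an isomorphism of generic fibres as genus-one curves. Second, I would pass to Jacobians on the level of generic fibres: the Jacobian $\Jac^0$ of a genus-one curve is a well-defined elliptic curve (an abelian variety of dimension one), and an isomorphism of genus-one curves induces a \emph{group} isomorphism of their Jacobians, since the Jacobian construction is functorial and carries isomorphisms to homomorphisms respecting the group structure. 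Third, I would spread this isomorphism of generic Jacobian fibres back out to an isomorphism of the total spaces $S \simeq S'$, again invoking the equivalence of Remark~\ref{rem:Pic0-generic}, and track that it twists the base by the same $\ol{\beta}$ and sends the zero-section of $S$ to that of $S'$, so that it is a group isomorphism of elliptic surfaces.

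The main obstacle I expect is keeping the base twist and the group (zero-section) structure straight simultaneously. The equivalence in Remark~\ref{rem:Pic0-generic} is stated for isomorphisms \emph{over} $\PP^1$, i.e.\ twisting by the identity, whereas $\gamma$ genuinely twists by $\ol{\beta}$; so I must be careful to identify $\CC(C)$ with $\CC(C')$ via $\ol{\beta}$ before applying the equivalence, and then verify that the resulting isomorphism on total spaces, when read back through this identification, really does twist by $\ol{\beta}$ rather than by the identity. The second delicate point is that $\Jac^0$ naturally acts as a relative group scheme on $X$ over $C$, so the statement that $\Jac^0(\gamma)$ is a \emph{group} isomorphism amounts to checking that the induced map on generic Jacobians is a homomorphism of elliptic curves fixing the origin; this follows from the universal property of the Jacobian but should be stated explicitly. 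Once these compatibilities are pinned down, the existence and uniqueness of $\Jac^0(\gamma)$ follow formally from the generic fibre equivalence combined with functoriality of the Jacobian on genus-one curves.
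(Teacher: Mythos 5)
Your proposal is correct and follows essentially the same route as the paper: the paper's proof handles the identity-twist case by invoking Remark \ref{rem:Pic0-generic} (the generic-fibre equivalence) and simply cites \cite[\S3, (3.3)]{DM22} for the general twisted case. Your argument spells out exactly the reduction that citation covers — identifying $\CC(C)$ with $\CC(C')$ via $\ol{\beta}^*$, applying functoriality of $\operatorname{Pic}^0$ on generic fibres, and spreading out — so it is a valid, slightly more self-contained version of the same proof.
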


\begin{proof}
When $\ol{\beta}$ is the identity,
this is a standard result
which follows immediately
from Remark \ref{rem:Pic0-generic}.
For the general case,
see \cite[\S3, (3.3)]{DM22}.
\end{proof}

The Ogg--Shafarevich theory
relates elements 
in the Brauer group $\Br(S)$
of an elliptic K3 surface
$S$ with a section, to $S$-torsors.
For our purposes, the following definition
of a torsor is convenient. See 
\cite{Fri98},
\cite[Proposition 5.6]{Huy16}
for the equivalence with the standard definition.

\begin{definition}Let $f:S\to \PP^1$ be an elliptic K3 surface with a section. An \textit{$f$-torsor} 
is a pair $(g: X\to\PP^1,\theta)$ where $g: X\to \PP^1$ is an elliptic K3 surface and $\theta: \Jac^0(X)\to S$ is an isomorphism over $\PP^1$ preserving the zero-sections, i.e. a group isomorphism over $\PP^1$. 
\end{definition}

An isomorphism of $f$-torsors $(g: X\to \PP^1, \theta)$ and $(h:Y\to \PP^1, \eta)$ is an isomorphism $\gamma: X\to Y$ over $\PP^1$ such that 
$$
\xymatrix{
\Jac^0(X) \ar[rr]^-{\Jac^0(\gamma)} \ar[dr]_-\theta && \Jac^0(Y) \ar[dl]^-\eta \\ & S &
}
$$ commutes.

\begin{example}\label{ex:X-J-torsor}
If $X$ is an elliptic K3 surface,
then $X$ has a natural structure $(X, \mathrm{id}_{\Jac^0(X)})$
of a 
torsor over $\Jac^0(X)$.
Since $\Jac^0(\Jac^k(X)) = \Jac^0(X)$ (this can be checked e.g.
using Remark \ref{rem:Pic0-generic}),
all Jacobians $\Jac^k(X)$ also have a natural
$\Jac^0(X)$-torsor structure.
\end{example}

The set of isomorphism classes of $f$-torsors
is in bijection with the Tate--Shafarevich group of $f: S\to \PP^1$
\cite[11.5.5(ii)]{Huy16}, and we denote it $\Sha(f: S\to \PP^1)$ or just $\Sha(S)$ if it can not lead to confusion.
If $S\to \PP^1$ is an elliptic K3 surface with a section, then there is an isomorphism
\begin{equation}\label{eq:Br-Sha}
\operatorname{Br}(S)\simeq \Sha(S),
\end{equation}
see \cite{Cal00}, \cite[Corollary 11.5.5]{Huy16}.
We recall
the construction of the Tate--Shafarevich group 
and of \eqref{eq:Br-Sha}
in the proof of Lemma \ref{lem:functorialitytorsors}.
For an $S$-torsor $(X, \theta)$ we write $\alpha_X \in \Br(S)$
for the class corresponding to $[X] \in \Sha(S)$
under \eqref{eq:Br-Sha}. 
It would be more precise to include $\theta$ in the notation, but we do not do that, assuming that
the torsor structure on $X$ is fixed.
We also write $\alpha_X: T(X) \to \Q/\Z$ for the corresponding
element with the respect to \eqref{eq:Br-T}.

\begin{lemma}\label{lem:Br-Sha-WC}
Let $(X, \theta)$ be an $S$-torsor. Let $t$ be the order of $\alpha_X \in \Br(S)$.

(i) $X$ has a section if and only if $\alpha_X = 0$,
in which case $X$ is isomorphic to $S$
as an $S$-torsor.

(ii) For all $k \in \Z$ we have
$\alpha_{\Jac^k(X)} = k\cdot \alpha_X$.

(iii) The multisection index of $X$ equals $t$.

(iv) We have a Hodge isometry
$T(X) \simeq \Ker(\alpha_X: T(S) \to \Z/t\Z)$.
\end{lemma}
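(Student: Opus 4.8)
The plan is to prove the four statements of Lemma \ref{lem:Br-Sha-WC} by systematically exploiting the two isomorphisms \eqref{eq:Br-Sha} and \eqref{eq:Br-T}, together with the Ogg--Shafarevich description of torsors and the explicit C\u{a}ld\u{a}raru class computation of Lemma \ref{lem:caldararu-class}.

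For part (i), I would argue that $X$ has a section precisely when $[X]$ is the trivial class in $\Sha(S)$, which under \eqref{eq:Br-Sha} corresponds to $\alpha_X = 0$. The identity element of the Tate--Shafarevich group is, by construction, the torsor $(S, \mathrm{id})$, so the vanishing of $\alpha_X$ means $(X,\theta) \simeq (S,\mathrm{id})$ as $S$-torsors; this recovers the section and the stated isomorphism simultaneously. For part (ii), I would use Example \ref{ex:X-J-torsor} to regard all $\Jac^k(X)$ as $\Jac^0(X) = S$-torsors, and then invoke the fact that the bijection $\Sha(S) \simeq \Br(S)$ is a group isomorphism under which the operation $X \mapsto \Jac^k(X)$ corresponds to multiplication by $k$; concretely, $[\Jac^k(X)] = k \cdot [X]$ in $\Sha(S)$, hence $\alpha_{\Jac^k(X)} = k \cdot \alpha_X$. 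The compatibility of the torsor group law with the passage to Jacobians is precisely the content one expects from the Ogg--Shafarevich formalism recalled in Lemma \ref{lem:functorialitytorsors}.

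For part (iv), the key computation is to identify $T(X)$ inside $T(S)$. Since $\alpha_X \in \Br(S) \simeq \operatorname{Hom}(T(S), \Q/\Z)$ has order $t$, it defines a surjection $\alpha_X : T(S) \to \Z/t\Z$, and I expect that $X$ arises as a (twisted) moduli space of sheaves on $S$ whose Mukai vector has divisibility $t$, so that the exact sequence
\[
0 \to T(X) \to T(S) \overset{\alpha_X}{\to} \Z/t\Z \to 0
\]
from the discussion preceding Lemma \ref{lem:caldararu-class} yields the Hodge isometry $T(X) \simeq \Ker(\alpha_X)$. The subtle point is matching the torsor-theoretic Brauer class with the moduli-theoretic (C\u{a}ld\u{a}raru) Brauer class; this is exactly where \eqref{eq:Br-Sha} and the computation of Lemma \ref{lem:caldararu-class} must be reconciled, and I would make this identification explicit. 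Finally, part (iii) follows from (iv): the multisection index of $X$ equals the order of the cokernel $T(S)/T(X) \simeq \Z/t\Z$, since the multisection index is read off from the divisibility of the fibre class, which in turn is governed by the index of $T(X)$ in $T(S)$ under the isometry \eqref{eq:A-orthog} relating $\NS$ and $T$.

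The main obstacle will be part (iv), specifically the careful identification of the torsor-theoretic class $\alpha_X$ with the C\u{a}ld\u{a}raru class so that the exact sequence applies verbatim; once this bridge is in place, the remaining parts are formal consequences of the group-theoretic structure of $\Sha(S) \simeq \Br(S)$ and standard facts about discriminant groups. I would therefore front-load the proof with a clean statement relating the two incarnations of the Brauer class, and then deduce (i)--(iv) in the order (i), (ii), (iv), (iii).
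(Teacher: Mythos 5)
Your outline for (i) matches the paper's argument, and for (ii) and (iv) you are, like the paper, ultimately leaning on external input: the paper simply cites C\u{a}ld\u{a}raru (Theorem 4.5.2 for (ii), Theorem 5.4.3 for (iv)) rather than proving these from scratch. The one substantive difference is (iii). You deduce it from (iv) by identifying the multisection index with the order of the cokernel $T(S)/T(X)$ via the moduli-theoretic picture and Lemma \ref{lem:caldararu-class}; this works, but it routes everything through the hardest identification. The paper's argument for (iii) is more elementary and independent of (iv): since $\Jac^{\ind(X)}(X)$ has a section, (i) and (ii) give that $\ord(\alpha_X)$ divides $\ind(X)$, and the formula $\ind(\Jac^k(X)) = \ind(X)/\gcd(\ind(X),k)$ applied with $k=\ord(\alpha_X)$ gives the reverse divisibility. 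That argument buys you (iii) using only the group structure of $\Sha(S)$ and a standard index formula, with no need to reconcile the two incarnations of the Brauer class.

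Two cautions. First, the ``bridge'' you correctly identify as the crux of (iv) --- matching the Ogg--Shafarevich class of the torsor with the C\u{a}ld\u{a}raru obstruction class of $S$ viewed as the moduli space $M_{(0,F,0)}(X)$ --- is announced but never carried out in your proposal; since your version of (iii) also depends on it, as written both (iii) and (iv) rest on an unexecuted step. This is exactly the content of \cite[Theorem 5.4.3]{Cal00}, so either cite it explicitly (as the paper does) or actually supply the identification. Second, your justification of (ii) points to Lemma \ref{lem:functorialitytorsors}, but that lemma concerns functoriality of $\Sha$ under group isomorphisms $\beta\colon S\to S'$, not the compatibility of the group law on $\Sha(S)$ with $X\mapsto \Jac^k(X)$; the correct reference is again C\u{a}ld\u{a}raru's Theorem 4.5.2, and as stated your argument for (ii) is close to restating the claim as a ``fact''.
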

\begin{proof}

(i) 
It follows by construction that
all $S$-torsor structures on $S$
are isomorphic, and 
correspond to
$0 \in \Br(S)$
under 
\eqref{eq:Br-Sha}.
Thus, if $\alpha_X = 0$, then $X$
is isomorphic as $S$-torsor to $S$,
in particular $X$ and $S$ 
are isomorphic as elliptic
surfaces, hence $X$ has a section.
Conversely, if $X$ has a section, then
we have $S \simeq \Jac^0(X) \simeq X$
hence $X$ is isomorphic as a torsor to some
torsor structure on $S$, so that $\alpha_X = 0$
by the argument above.

Part (ii) is \cite[Theorem 4.5.2]{Cal00}
and
part (iv) is \cite[Theorem 5.4.3]{Cal00}. 

(iii) For a K3 surface $X$ with a chosen elliptic fibration
let us write $\ind(X)$
for the multisection index of the fibration. 
Since $\Jac^{\ind(X)}(X)$ admits
a section, we have $\Jac^{\ind(X)}(X)\simeq S$ as torsors by (i).
It follows using (ii)
that $0 = \alpha_{\Jac^{\ind(X)}(X)} = \ind(X)\alpha_X$
hence
$\ord(\alpha_X)$ divides $\ind(X)$.
To prove their equality we use
\cite[Ch. 4, (4.5), (4.6)]{Huy06} 
to deduce that for all $k \in \Z$
\[
\ind(\Jac^k(X)) = \frac{\ind(X)}{\gcd(\ind(X), k)}.
\]
In particular, 
\[
1 = \ind(\Jac^{\ord(\alpha_X)}(X)) = \frac{\ind(X)}{\gcd(\ind(X), \ord(\alpha_X))} = \frac{\ind(X)}{\ord(\alpha_X)}
\]
so that $\ind(X) = \ord(\alpha_X)$, which proves part (ii).
\end{proof}

Let $S\to \PP^1$ be an elliptic K3 surface with a section.
Recall that we denote by
$A_{\PP^1}(S)$ (resp. $A(\PP^1, F)$)
the group 
of group automorphisms of $S$ over $\PP^1$ (resp. group automorphisms
of $S$ preserving the fibre class $F \in \NS(S)$).
We have $A_{\PP^1}(S) \subset A(S, F)$, and we are interested in the orbits of these two groups acting on the Brauer group $\operatorname{Br}(S)$.
We do this more generally, by
explaining functoriality of $\Sha(S)$ and $\Br(S)$
 with respect to $S$.

Let $f:S\to C$ and $f':S'\to C'$ be elliptic K3 surfaces with fixed sections. 
Assume that there exists a group isomorphism $\beta: S \simeq S'$ twisting the base by $\overline{\beta}: C \simeq C'$. We define a map $\beta_*: \Sha(f: S\to C)\to \Sha(f':S'\to C')$ as follows: $$\beta_*(g: X\to C, \theta) = (\overline{\beta}\circ g: X\to C', \beta\circ\theta).$$ 
Note that 
the element on the right-hand side
belongs to $\Sha(f')$ by Lemma
\ref{lem:functorialjaczero}. 

Furthermore, in the same setting,
we define $\beta_*:\operatorname{Hom}(T(S),\Q/\Z) \to \operatorname{Hom}(T(S'),\Q/\Z)$ by $\beta_*(\alpha) = \alpha\circ \beta^*$, where $\beta^*: T(S')\to T(S)$ is the Hodge isometry induced by $\beta$.
It is important for applications that these two pushforwards are compatible with \eqref{eq:Br-Sha}:

\begin{lemma} \label{lem:functorialitytorsors}
Let $f: S\to C$ and $f': S'\to C'$ be elliptic K3 surfaces with fixed sections, and let $\beta:S\simeq S'$ be a group isomorphism twisting the base by $\ol{\beta}$. Then there is a commutative square of isomorphisms 
\begin{equation}\label{eq:functorialtorsors}
\xymatrix{
\Sha(f:S\to C) \ar[r]^-{\beta_*} \ar[d] & \Sha(f':S'\to C')  \ar[d]\\
\operatorname{Hom}(T(S),\Q/\Z) \ar[r]^-{\beta_*} & \operatorname{Hom}(T(S'),\Q/\Z).
}
\end{equation}
\end{lemma}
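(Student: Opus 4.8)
The plan is to reduce the commutativity of the square \eqref{eq:functorialtorsors} to the \emph{naturality} of the two isomorphisms \eqref{eq:Br-Sha} and \eqref{eq:Br-T} with respect to group isomorphisms twisting the base, and then to verify that naturality by recalling the relevant constructions. First I recall the construction of $\Sha$ and of \eqref{eq:Br-Sha}. Let $\sheaf{S}^\#$ be the sheaf on $\PP^1$ whose sections over an open set $V$ are the holomorphic sections of $\Jac^0 \to \PP^1$ over $V$; then $\Sha(f\colon S\to \PP^1) = H^1(\PP^1, \sheaf{S}^\#)$, where an $f$-torsor $(g\colon X\to\PP^1, \theta)$ defines a class by trivialising $X$ over an analytic open cover $\{U_i\}$ of $\PP^1$ and recording the resulting $1$-cocycle of local sections. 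The isomorphism \eqref{eq:Br-Sha} is then the edge map $H^1(\PP^1, \sheaf{S}^\#) \to H^2(S, \bigo_S^*) = \Br(S)$ of the Leray spectral sequence for $f$ combined with the exponential sequence, which is an isomorphism onto $\Br(S)$ for K3 surfaces by \cite{Cal00}.

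Next I would establish the left-hand (i.e.\ $\Sha$ versus $\Br$) part of the diagram. A group isomorphism $\beta\colon S\simeq S'$ twisting the base by $\ol\beta$ carries the fibration, the fibrewise group structure and the exponential sequence of $S$ to those of $S'$; it therefore induces an isomorphism of sheaves on the base identifying $\ol\beta_* \sheaf{S}^\#$ with $(\sheaf{S}')^\#$, together with an isomorphism $\Br(S) \simeq \Br(S')$, compatibly with the Leray edge maps. It then suffices to check that the induced map $H^1(\PP^1, \sheaf{S}^\#) \simeq H^1(\PP^1, (\sheaf{S}')^\#)$ agrees with the geometric pushforward $\beta_*$ on torsors. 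This is a direct cocycle computation: trivialising $(g\colon X\to\PP^1, \theta)$ over $\{U_i\}$ and applying $\beta$ yields a trivialisation of $\beta_*(X,\theta) = (\ol\beta\circ g, \beta\circ\theta)$ over $\{\ol\beta(U_i)\}$ whose transition cocycle is the $\beta$-image of the original one, which is exactly the map on $H^1$ induced by $\beta$ and $\ol\beta$. Here Lemma \ref{lem:functorialjaczero} ensures that $\beta\circ\theta$ is again a group isomorphism onto $S'$, so that $\beta_*(X,\theta)$ is genuinely an $f'$-torsor.

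Finally I would treat the right-hand part, namely the compatibility of the identification $\Br(S) \simeq \operatorname{Hom}(T(S), \Q/\Z)$ of \eqref{eq:Br-T} with $\beta$. This isomorphism is built from the exponential sequence and the Hodge structure on $H^2(S,\Z)$, and is natural for isomorphisms of K3 surfaces: the pullback $\beta^*\colon T(S')\to T(S)$ intertwines it with the map $\alpha\mapsto \alpha\circ\beta^*$ on $\operatorname{Hom}(T(-),\Q/\Z)$. Combining this with the left-hand part, and using that the vertical arrows in \eqref{eq:functorialtorsors} are by definition the composite of (the inverse of) \eqref{eq:Br-Sha} with \eqref{eq:Br-T}, yields the commutativity of the whole square.

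I expect the genuine obstacle to be the presence of the base twist $\ol\beta$ rather than an isomorphism over the identity. When $\ol\beta = \operatorname{id}$, both parts reduce to naturality over a fixed base, which is C\u{a}ld\u{a}raru's functoriality and is already encoded in the equivalence of Remark \ref{rem:Pic0-generic}. The new point is equivariance under automorphisms of $\PP^1$: one must check that the description $\Sha(f) = H^1(\PP^1, \sheaf{S}^\#)$, the Leray edge map and \eqref{eq:Br-T} all transform compatibly under pushforward of sheaves along $\ol\beta$, so that the geometrically defined $\beta_*$ and the cohomologically induced map coincide. Keeping these two notions of pushforward aligned through the base twist is the only delicate bookkeeping in the argument.
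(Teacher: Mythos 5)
Your proposal follows essentially the same route as the paper: the vertical arrows are factored through the chain $\Sha(f) \simeq H^1(C,\sheaf{X}_0) \to \Br(S) \to \operatorname{Hom}(T(S),\Q/\Z)$, and commutativity is checked square by square using that $\beta$ induces compatible pushforwards $\ol{\beta}_*\sheaf{X}_0 \simeq \sheaf{X}_0'$ and $\beta_*\mathbb{G}_m \simeq \mathbb{G}_m$ together with naturality of the exponential-sequence identification. The cocycle verification and the Hodge-theoretic naturality you describe are exactly the content the paper leaves to the reader, so the argument is correct and matches the paper's proof.
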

\begin{proof}
The vertical arrows in  \eqref{eq:functorialtorsors} are the compositions of the vertical maps in the following diagram, with cohomology groups in  and analytic topology respectively: 
\begin{equation}\label{eq:ladder}
    \xymatrix{
    \Sha(f: S\to C) \ar[r]^-{\beta_*} \ar[d] & \Sha(f': S'\to C') \ar[d] \\
    H^1(C,\sheaf{X}_0) \ar[r]^-{(1)} \ar[d] & H^1(C',\sheaf{X}'_0) \ar[d]\\ 
    H^2(S,\mathbb{G}_m) \ar[r]^-{(2)} \ar[d]_-{} & H^2(S',\mathbb{G}_m) \ar[d]^-{} \\ 
    H^2_{an}(S,\mathcal{O}^*_S)_{tors} \ar[r]^-{(3)} \ar[d]_-{(4)} & H^2_{an}(S',\mathcal{O}^*_{S'})_{tors} \ar[d]^-{(4)} \\ 
    \operatorname{Hom}(T(S),\Q/\Z) \ar[r]^-{\beta_*} & \operatorname{Hom}(T(S'),\Q/\Z),
    }
\end{equation}
c.f. \cite[Corollary 11.5.6]{Huy16}. Here $\sheaf{X}_0$ and $\sheaf{X}_0'$ are the sheaves of \'etale local sections of $f$ and $f'$, respectively. The horizontal arrows (1), (2), (3) are induced by $\overline{\beta}_*\sheaf{X}_0\simeq \sheaf{X}_0'$ and $\beta_*\mathbb{G}_m \simeq \mathbb{G}_m.$ Arrows (4) are induced by the exponential sequence. One can check commutativity for each square in  \eqref{eq:ladder}, and this gives the desired result.
\end{proof}

\begin{proposition}\label{prop:functorial--Sha}
Let $f: S \to C$, $f': S' \to C'$ be elliptic K3 surfaces with sections.
Let $(g: X \to C,\theta)$, $(g': X' \to C',\theta')$ be torsors for $f$ and $f'$ respectively. Then there is a group isomorphism $\beta:S\simeq S'$, twisting the base by $\ol{\beta}: C\simeq C'$ and such that $\beta_*(g,\theta) \simeq (g',\theta')$ if and only if there is an elliptic surface isomorphism $X\simeq X'$ twisting the base by $\ol{\beta}.$
\end{proposition}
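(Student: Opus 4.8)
The plan is to prove the two implications separately, observing that one direction is essentially a tautology unpacking the definitions, while the other requires manufacturing the group isomorphism $\beta$ out of a bare elliptic-surface isomorphism by passing to zeroth Jacobians. Throughout, the only external input is the functoriality of $\Jac^0$ recorded in Lemma \ref{lem:functorialjaczero}; no lattice-theoretic translation (in particular not Lemma \ref{lem:functorialitytorsors}) is needed here, since the whole argument lives at the level of torsors and group isomorphisms.

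For the implication ``torsor picture $\Rightarrow$ elliptic-surface picture'', suppose $\beta:S\simeq S'$ is a group isomorphism twisting the base by $\ol{\beta}$ and that $\gamma$ is an isomorphism of $f'$-torsors from $\beta_*(g,\theta)=(\ol{\beta}\circ g,\,\beta\circ\theta)$ to $(g',\theta')$. By definition $\gamma:X\to X'$ is an isomorphism over $C'$, that is $g'\circ\gamma=\ol{\beta}\circ g$. But this identity is exactly the commutativity of the square in Definition \ref{def:ellip-isom} defining an isomorphism of elliptic surfaces $X\to X'$ twisting the base by $\ol{\beta}$. Hence $\gamma$ itself is the desired elliptic-surface isomorphism, and this direction uses nothing beyond the definition of a torsor isomorphism.

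For the converse, suppose $\gamma:X\simeq X'$ is an isomorphism of elliptic surfaces twisting the base by $\ol{\beta}$, so $g'\circ\gamma=\ol{\beta}\circ g$. First I would apply Lemma \ref{lem:functorialjaczero} to $\gamma$ to obtain a group isomorphism $\Jac^0(\gamma):\Jac^0(X)\simeq\Jac^0(X')$ twisting the base by $\ol{\beta}$. The decisive point is that although $\gamma$ need not respect any section, $\Jac^0(\gamma)$ is automatically a \emph{group} isomorphism, i.e.\ it preserves the (canonical) zero-sections. I then set
$$\beta:=\theta'\circ\Jac^0(\gamma)\circ\theta^{-1}:S\simeq S',$$
a composite of group isomorphisms, hence a group isomorphism; tracking the base maps, $\theta^{-1}$ is over $C$, $\Jac^0(\gamma)$ twists by $\ol{\beta}$, and $\theta'$ is over $C'$, so $\beta$ twists the base by $\ol{\beta}$. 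It then remains to check $\beta_*(g,\theta)\simeq(g',\theta')$, and I claim $\gamma$ itself witnesses this isomorphism of torsors: it is an isomorphism over $C'$ by hypothesis, and the triangle condition $\theta'\circ\Jac^0(\gamma)=\beta\circ\theta$ holds by the very definition of $\beta$.

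The step I expect to be the main obstacle is the bookkeeping underlying the converse: matching the two fibration structures $g:X\to C$ and $\ol{\beta}\circ g:X\to C'$ together with the corresponding identifications of their zeroth Jacobians, so that the $\Jac^0(\gamma)$ appearing in the triangle condition for torsors over $C'$ genuinely agrees with the $\Jac^0(\gamma)$ produced by Lemma \ref{lem:functorialjaczero}. I would resolve this by a diagram chase, applying Lemma \ref{lem:functorialjaczero} once more to $\mathrm{id}_X$ viewed as a $\ol{\beta}$-twisting isomorphism from $(X,g)$ to $(X,\ol{\beta}\circ g)$, and using that the zero-section of $\Jac^0$ is canonical so that all the relevant squares commute. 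Everything else is formal.
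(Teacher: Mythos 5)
Your proof is correct and follows essentially the same route as the paper: the forward direction is definition-unpacking, and the converse defines $\beta:=\theta'\circ\Jac^0(\gamma)\circ\theta^{-1}$ via Lemma \ref{lem:functorialjaczero}, exactly as the paper does. The only cosmetic difference is that you verify $\beta_*(g,\theta)\simeq(g',\theta')$ by exhibiting $\gamma$ itself as the witnessing torsor isomorphism, whereas the paper decomposes $\beta_*$ as $\theta'_*\circ\Jac^0(\gamma)_*\circ\theta^{-1}_*$; both come to the same thing.
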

\begin{proof}
Suppose there is a group isomorphism $\beta: S\simeq S'$ twisting the base by $\ol{\beta}$ and such that $\beta_*(g,\theta) = (g',\theta').$ Then it follows from the definition of $\beta_*$ that there is an elliptic surface isomorphism $X\simeq X'$ twisting the base by $\ol{\beta}.$
Conversely, suppose there is an elliptic surface isomorphism $\gamma: X\simeq X'$ twisting the base by $\ol{\beta}$. 
Consider the isomorphism $\beta\coloneqq \theta'\circ \Jac^0(\gamma)\circ \theta^{-1}: S\to S'$.
We can compute $\beta_*(g,\theta)$,
decomposing $\beta_*$
as a composition of isomorphisms
\[
\Sha(S) \overset{\theta^{-1}_*}{\to} \Sha(\Jac^0(X))
\xrightarrow{\Jac^0(\gamma)_*}
\Sha(\Jac^0(X')) \overset{\theta'_*}\to \Sha(S')
\]
to see that $\beta_*(g,\theta) = (g',\theta').$
\end{proof}

\begin{remark}
The proof of Proposition \ref{prop:functorial--Sha} in fact shows that given $(g,\theta)$, $(g', \theta')$
as in the statement, the set of isomorphisms between elliptic fibrations
$g$
and $g'$ twisting the base by $\ol{\beta}$
(and ignoring the choice 
of $\theta$, $\theta'$)
is in natural bijection with the set
of group isomorphisms $\beta$ between $S$
and $S'$ twisting the base by $\ol{\beta}$
together with a chosen isomorphism
$\gamma$ between $\beta_*(g, \theta)$
and $(g', \theta')$.
\end{remark}

It will be more convenient for us to work
with the Brauer group instead of the Tate--Shafarevich group:

\begin{proposition}\label{prop:functorialitytorsors}
Using the same notation as in Proposition \ref{prop:functorial--Sha}, there is a group isomorphism $\beta:S\simeq S'$, twisting the base by $\ol{\beta}: C\simeq C'$ and such that $\beta_*\alpha_X = \alpha_{X'}$ if and only if there is an elliptic surface isomorphism $X\simeq X'$ twisting the base by $\ol{\beta}.$
\end{proposition}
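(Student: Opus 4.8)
The plan is to deduce this proposition directly from Proposition \ref{prop:functorial--Sha}, translating the condition $\beta_*(g,\theta) \simeq (g',\theta')$ on torsor classes into the condition $\beta_* \alpha_X = \alpha_{X'}$ on Brauer classes. The entire substantive content has already been isolated in Lemma \ref{lem:functorialitytorsors}, so what remains is to match up the two formulations carefully.

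First I would fix a group isomorphism $\beta: S \simeq S'$ twisting the base by $\ol{\beta}$, and observe that both sides of the desired equivalence quantify over exactly the same set of such $\beta$; it therefore suffices to show that, for each fixed $\beta$, the two conditions $\beta_*(g,\theta) \simeq (g',\theta')$ and $\beta_* \alpha_X = \alpha_{X'}$ coincide. By definition, $\alpha_X$ is the image of the torsor class $(g,\theta) \in \Sha(S)$ under the isomorphism \eqref{eq:Br-Sha} followed by \eqref{eq:Br-T}, i.e. under the left-hand vertical composition in the ladder \eqref{eq:ladder}, and likewise $\alpha_{X'}$ is the image of $(g',\theta')$ under the right-hand vertical composition.

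Next I would invoke the commutative square of Lemma \ref{lem:functorialitytorsors}, whose vertical arrows are precisely these compositions and whose horizontal arrows are $\beta_*$ on $\Sha$ and $\beta_*$ on $\operatorname{Hom}(T(-),\Q/\Z)$. Since the right-hand vertical map is an isomorphism, commutativity gives $\beta_* \alpha_X = \alpha_{X'}$ if and only if $\beta_*(g,\theta)$ and $(g',\theta')$ have the same image in $\operatorname{Hom}(T(S'),\Q/\Z)$, equivalently if and only if $\beta_*(g,\theta) \simeq (g',\theta')$ in $\Sha(S')$. Feeding this equivalence into Proposition \ref{prop:functorial--Sha}, which relates $\beta_*(g,\theta) \simeq (g',\theta')$ to the existence of an elliptic surface isomorphism $X \simeq X'$ twisting the base by $\ol{\beta}$, yields the claim.

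There is no genuine obstacle in this argument, as the real work was done in proving Lemma \ref{lem:functorialitytorsors} and Proposition \ref{prop:functorial--Sha}. The only point requiring care is bookkeeping: I must ensure that the identification of $\alpha_X$ with the image of $(g,\theta)$ is literally the map appearing in \eqref{eq:ladder}, so that the commutative square applies verbatim, and that the quantifier over $\beta$ (group isomorphisms twisting the base by $\ol{\beta}$) is the same in both the torsor and the Brauer formulations.
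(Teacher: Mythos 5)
Your proposal is correct and is exactly the paper's argument: the paper's proof simply states that the claim "follows immediately from Proposition \ref{prop:functorial--Sha} and Lemma \ref{lem:functorialitytorsors}," and your write-up spells out precisely that deduction, using the commutative square of Lemma \ref{lem:functorialitytorsors} to translate between the Tate--Shafarevich and Brauer formulations and then invoking Proposition \ref{prop:functorial--Sha}. The bookkeeping points you flag (that $\alpha_X$ is by definition the image of $(g,\theta)$ under the vertical maps in \eqref{eq:ladder}, and that the quantification over $\beta$ matches) are exactly the right ones.
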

\begin{proof}
This follows immediately from Proposition \ref{prop:functorial--Sha} and Lemma \ref{lem:functorialitytorsors}.
\end{proof}

\begin{corollary}
\label{cor:Jk-funct}
Let $g: X \to C$, $g': X' \to C'$ be elliptic K3 surfaces which  are isomorphic via an isomorphism which twists the base by $\overline{\beta}: C \to C'$.
Then for all $k \in \Z$, there exists an elliptic surface isomorphism $\Jac^k(X)\simeq\Jac^k(X')$ twisting the base by $\ol{\beta}.$
\end{corollary}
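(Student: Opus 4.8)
The plan is to reduce the statement entirely to the two directions of Proposition \ref{prop:functorialitytorsors}, glued together by the additivity of the Brauer class in $k$ recorded in Lemma \ref{lem:Br-Sha-WC}(ii). The point is that $\Jac^k$ changes the torsor (hence multiplies the associated Brauer class by $k$) but does not change the underlying Jacobian elliptic surface, so an isomorphism between $X$ and $X'$ and an isomorphism between $\Jac^k(X)$ and $\Jac^k(X')$ are governed by one and the same group isomorphism of zeroth Jacobians.

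First I would set $S = \Jac^0(X)$ and $S' = \Jac^0(X')$, and regard $X$ and $X'$ as torsors over $S$ and $S'$ via the canonical structures of Example \ref{ex:X-J-torsor}. Applying Proposition \ref{prop:functorialitytorsors} to the given elliptic surface isomorphism twisting the base by $\ol{\beta}$ produces a group isomorphism $\beta : S \simeq S'$ twisting the base by $\ol{\beta}$ and satisfying $\beta_* \alpha_X = \alpha_{X'}$. Concretely this $\beta$ is $\Jac^0(\gamma)$ for the chosen isomorphism $\gamma : X \simeq X'$, by Lemma \ref{lem:functorialjaczero}, but for the argument all that matters is its existence.

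Next I would observe that the Jacobians are torsors over the \emph{same} pair $S$, $S'$: by Example \ref{ex:X-J-torsor} one has $\Jac^0(\Jac^k(X)) = \Jac^0(X) = S$, with $\Jac^k(X)$ canonically an $S$-torsor, and likewise $\Jac^k(X')$ an $S'$-torsor. By Lemma \ref{lem:Br-Sha-WC}(ii) their Brauer classes are $\alpha_{\Jac^k(X)} = k\,\alpha_X$ and $\alpha_{\Jac^k(X')} = k\,\alpha_{X'}$. Since $\beta_*$ is defined by precomposition with the Hodge isometry $\beta^*$, it is a group homomorphism, so
\[
\beta_* \alpha_{\Jac^k(X)} = k\,\beta_*\alpha_X = k\,\alpha_{X'} = \alpha_{\Jac^k(X')}.
\]
Applying Proposition \ref{prop:functorialitytorsors} now in the reverse direction, to the torsors $\Jac^k(X)$ and $\Jac^k(X')$ together with the group isomorphism $\beta$ twisting the base by $\ol{\beta}$, yields the desired elliptic surface isomorphism $\Jac^k(X) \simeq \Jac^k(X')$ twisting the base by $\ol{\beta}$.

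The only real obstacle is bookkeeping of torsor structures: I must make sure that both $\Jac^k(X)$ and $\Jac^k(X')$ are viewed as torsors over the same $S$ and $S'$ and that the \emph{same} group isomorphism $\beta = \Jac^0(\gamma)$ is reused in both applications of Proposition \ref{prop:functorialitytorsors}. This compatibility is exactly what the identity $\Jac^0(\Jac^k(X)) = \Jac^0(X)$ of Example \ref{ex:X-J-torsor} guarantees; once it is in place, the additivity of $\beta_*$ and of the Brauer class makes the rest purely formal.
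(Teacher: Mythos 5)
Your proposal is correct and follows essentially the same route as the paper's proof: apply Proposition \ref{prop:functorialitytorsors} to obtain $\beta$ with $\beta_*\alpha_X = \alpha_{X'}$, use $\alpha_{\Jac^k(X)} = k\alpha_X$ and linearity of $\beta_*$ to get $\beta_*\alpha_{\Jac^k(X)} = \alpha_{\Jac^k(X')}$, and then apply the same proposition in the converse direction. Your extra care about the torsor bookkeeping via $\Jac^0(\Jac^k(X)) = \Jac^0(X)$ is a welcome clarification but does not change the argument.
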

\begin{proof}
Let $S\to C$ and $S'\to C'$ be the zeroth Jacobians of $X\to C$ and $X'\to C'$, respectively. By Proposition \ref{prop:functorialitytorsors}, there is a group isomorphism $\beta:S\to S'$ such that $\beta_*\alpha_X = \alpha_{X'}$. This means that $\beta_*(k\cdot\alpha_X) =k\cdot \beta_*\alpha_X =  k\cdot \alpha_{X'}$ for all $k\in \Z$. Since the Brauer classes of $\Jac^k(X)\to C$ and $\Jac^k(X')\to C'$ are $k\cdot\alpha_X$ and $k\cdot\alpha_{X'}$, the result follows from Proposition \ref{prop:functorialitytorsors}.
\end{proof}

\begin{corollary}\label{cor:orbitsareisomclasses}
Let $S\to C$ be an elliptic K3
surface with a section. 
The set of $A(S,F)$-orbits 
(resp. $A_C(S)$-orbits)
of $\Br(S)$
parametrizes 
$S$-torsors
up to isomorphism as elliptic surfaces
(resp. up to isomorphism over $C$).
\end{corollary}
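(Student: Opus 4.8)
The plan is to read this off directly from Proposition \ref{prop:functorialitytorsors}, specialising it to the case $S' = S$, $C' = C$, and then reinterpreting the two notions of base-twisting from Definition \ref{def:ellip-isom} in terms of the automorphism groups $A_C(S)$ and $A(S,F)$.

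First I would recall that the bijection \eqref{eq:Br-Sha} identifies $\Br(S)$ with the set $\Sha(S)$ of $S$-torsors up to \emph{torsor} isomorphism, sending $(X,\theta)$ to its Brauer class $\alpha_X$. The content of the corollary is then that passing to the coarser equivalence — isomorphism of the underlying elliptic surfaces, forgetting the torsor structure $\theta$ — replaces $\Sha(S)$ by an orbit space of $\Br(S)$ under the appropriate automorphism group. Both $A_C(S)$ and $A(S,F)$ act on $\Br(S) \simeq \operatorname{Hom}(T(S),\Q/\Z)$ through $\beta_*$, and since the relevant $\beta$ are genuine group automorphisms this is a group action, so the associated orbit relations are equivalence relations.

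The key step is to describe these two groups as groups of base-twisting group automorphisms. A group automorphism $\beta$ of $S$ lies in $A(S,F)$ exactly when it preserves the fibre class $F$; this holds if and only if $\beta$ carries fibres to fibres and hence descends to some $\ol{\beta} \in \Aut(\PP^1)$. Thus $A(S,F)$ is precisely the group of group automorphisms of $S$ twisting the base by \emph{some} automorphism of $C$, while $A_C(S) \subset A(S,F)$ is the subgroup twisting the base by $\operatorname{id}_C$. With this in hand, I would apply Proposition \ref{prop:functorialitytorsors} to two torsors $(X,\theta)$, $(X',\theta')$: in the ``over $C$'' case, taking $\ol{\beta} = \operatorname{id}_C$ shows that $X$ and $X'$ are isomorphic over $C$ if and only if $\beta_*\alpha_X = \alpha_{X'}$ for some $\beta \in A_C(S)$, i.e. if and only if $\alpha_X, \alpha_{X'}$ lie in the same $A_C(S)$-orbit. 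In the ``as elliptic surfaces'' case, $X \simeq X'$ as elliptic surfaces means such an isomorphism exists for \emph{some} $\ol{\beta} \in \Aut(C)$; by the proposition and the identification above this is equivalent to the existence of $\beta \in A(S,F)$ with $\beta_*\alpha_X = \alpha_{X'}$, i.e. to $\alpha_X, \alpha_{X'}$ lying in the same $A(S,F)$-orbit. Surjectivity of $(X,\theta) \mapsto \alpha_X$ via \eqref{eq:Br-Sha} ensures every orbit is realised by a torsor.

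The only point requiring genuine care is the identification of $A(S,F)$ with the full group of base-twisting group automorphisms, together with the observation that ``isomorphic as elliptic surfaces'' is exactly what permits an arbitrary twist $\ol{\beta}$; this is precisely what promotes the $A_C(S)$-orbits to the larger $A(S,F)$-orbits and accounts for the two parallel statements of the corollary. Everything else is a formal consequence of Proposition \ref{prop:functorialitytorsors} and the bijection \eqref{eq:Br-Sha}.
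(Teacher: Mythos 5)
Your argument is correct and is essentially the paper's own proof: both specialise Proposition \ref{prop:functorialitytorsors} to $S = S'$ and read off the two orbit descriptions from the two notions of base-twisting, with your identification of $A(S,F)$ as the group of group automorphisms twisting the base by an arbitrary $\ol{\beta}$ (and $A_C(S)$ as those with $\ol{\beta}=\operatorname{id}$) being exactly the implicit step the paper leaves to the reader.
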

\begin{proof}
We put $S = S'$ in Proposition
\ref{prop:functorialitytorsors},
consider $S$-torsors $(X, \theta)$ and $(X', \theta')$
and write $\alpha_X, \alpha_{X'} \in \Br(S)$
for the corresponding Brauer classes.
By Proposition
\ref{prop:functorialitytorsors}
there is an isomorphism
between 
elliptic surfaces $X$, $X'$
twisting the base (resp. over the base)
if and only if there exists 
$\beta \in A(S, F)$ (resp. 
$\beta \in A_C(S)$) such that $\beta_*(\alpha_X) = \alpha_{X'}$.
Thus the resulting sets of orbits
are as stated in the Corollary.
\end{proof}

\begin{example}\label{ex:dual-torsor}
The automorphism $\beta = -1 \in A_C(S)$
acts on $\Br(S)$ as multiplication by $-1$.
This way we always have (at least) two
torsor structures on every elliptic K3 surface $X$.
If $X$ has no sections, 
these two 
torsor structures are isomorphic if and only if $\alpha_X \in \Br(X)$ has order two, which by Lemma \ref{lem:Br-Sha-WC} is equivalent to $X$ having multisection index two.  
\end{example}

We write $\EllK3$ for the set of isomorphism classes of elliptic K3 surfaces (isomorphisms are allowed
to twist the base). 
We can express Ogg--Shafarevich theory
as
a natural bijection between $\EllK3$
and the set of isomorphism
classes
of twisted Jacobian K3 surfaces.

\begin{definition}\label{def:twisted-jacobian-K3}
A \textit{twisted Jacobian K3 surface}
is a triple
$(S,f,\alpha)$ where $S$ is a K3 surface with elliptic fibration $f$ together with a fixed section, and $\alpha$ is a Brauer class on $S$.
\end{definition}

An isomorphism of two twisted Jacobian K3 surfaces $(S,f: S \to C,\alpha)$ and $(S',f': S' \to C',\alpha')$ is a group isomorphism $\beta:S\simeq S'$ such that $\beta_*\alpha=\alpha'.$
We write $\BrK3$ for the set of isomorphism classes of
twisted Jacobian K3 surfaces.
The above results show the following.

\begin{theorem}\label{thm:ell--br}
The map $\EllK3\to\BrK3$ given by $(X,g)\mapsto (\Jac^0(X),\Jac^0(g),\alpha_X)$ is a bijection.
\end{theorem}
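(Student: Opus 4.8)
The plan is to establish that $\EllK3 \to \BrK3$, $(X,g) \mapsto (\Jac^0(X), \Jac^0(g), \alpha_X)$, is a bijection by constructing an explicit two-sided inverse coming from the Ogg--Shafarevich correspondence. First I would check that the map is well-defined on isomorphism classes. If $(X,g)$ and $(X',g')$ are isomorphic elliptic K3 surfaces, say via $\gamma \colon X \simeq X'$ twisting the base by $\ol{\beta}$, then Lemma \ref{lem:functorialjaczero} produces a group isomorphism $\Jac^0(\gamma) \colon \Jac^0(X) \simeq \Jac^0(X')$ twisting the base by $\ol{\beta}$, and Proposition \ref{prop:functorialitytorsors} (with $\beta = \Jac^0(\gamma)$) shows $\beta_* \alpha_X = \alpha_{X'}$. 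Hence $(\Jac^0(X), \Jac^0(g), \alpha_X)$ and $(\Jac^0(X'), \Jac^0(g'), \alpha_{X'})$ are isomorphic twisted Jacobian K3 surfaces, so the map descends to isomorphism classes.

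For \textbf{surjectivity}, given a twisted Jacobian K3 surface $(S, f, \alpha)$, I would use the isomorphism $\Br(S) \simeq \Sha(S)$ of \eqref{eq:Br-Sha} to associate to $\alpha$ an $S$-torsor $(g \colon X \to \PP^1, \theta)$. Then $\Jac^0(X) \simeq S$ via $\theta$, and by construction the Brauer class $\alpha_X$ of this torsor equals $\alpha$; thus $(X,g)$ maps to a triple isomorphic to $(S, f, \alpha)$. For \textbf{injectivity}, suppose $(X,g)$ and $(X',g')$ have isomorphic images, i.e.\ there is a group isomorphism $\beta \colon \Jac^0(X) \simeq \Jac^0(X')$ twisting the base by some $\ol{\beta}$ with $\beta_* \alpha_X = \alpha_{X'}$. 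Applying Proposition \ref{prop:functorialitytorsors} directly (with $S = \Jac^0(X)$, $S' = \Jac^0(X')$, and the tautological torsor structures of Example \ref{ex:X-J-torsor}) yields an elliptic surface isomorphism $X \simeq X'$ twisting the base by $\ol{\beta}$, so $(X,g)$ and $(X',g')$ are equal in $\EllK3$. The two verifications are essentially the same statement read in opposite directions, which is exactly what Proposition \ref{prop:functorialitytorsors} is designed to provide.

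I expect the main subtlety, rather than a genuine obstacle, to be bookkeeping around the torsor structure $\theta$ versus the bare elliptic surface. The set $\EllK3$ remembers only the elliptic surface $(X,g)$, whereas the Ogg--Shafarevich picture is phrased in terms of $S$-torsors $(X,\theta)$; one must confirm that forgetting $\theta$ matches the notion of isomorphism in $\BrK3$, which allows an arbitrary base-twisting group isomorphism $\beta$ rather than a fixed identification over a fixed base. This is precisely the content encoded in Proposition \ref{prop:functorialitytorsors} and the remark following Proposition \ref{prop:functorial--Sha}, where the isomorphisms of torsors and the choice of $\theta$ are reconciled. Consequently the proof reduces to citing these results in both directions, and no new computation is required beyond invoking the tautological torsor structure of Example \ref{ex:X-J-torsor} to identify $\alpha_X$ for the element in the image.
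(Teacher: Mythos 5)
Your proposal is correct and follows essentially the same route as the paper: well-definedness and injectivity both come from Proposition \ref{prop:functorialitytorsors}, and surjectivity from the Ogg--Shafarevich isomorphism \eqref{eq:Br-Sha} producing an $S$-torsor whose class maps back to $(S,f,\alpha)$. Your extra attention to the bookkeeping between torsor structures $\theta$ and bare elliptic surfaces is a fair elaboration of what the paper leaves implicit, but introduces no new ideas.
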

\begin{proof}
From Proposition \ref{prop:functorialitytorsors}, it follows that the map $\EllK3\to \BrK3$ is well-defined and injective. For the surjectivity, let $(S,f,\alpha)\in \BrK3$. Using the isomorphism \eqref{eq:Br-Sha}, we obtain an $S$-torsor $(g:X\to\PP^1,\theta:\Jac^0(X)\simeq S)\in \Sha(f)$ corresponding to $\alpha$. In particular, the map $\EllK3\to \BrK3$ 
assigns $(X,g)\mapsto (\Jac^0(X),\Jac^0(g),
\theta^{-1}_*\alpha)\simeq (S,f,\alpha).$ 
\end{proof}

\subsection{Isomorphisms of Jacobians}\label{sec:countjac}

We work with an elliptic K3 surface $X$; recall
from Example \ref{ex:X-J-torsor}
that $X$ and all its
Jacobians
$\Jac^k(X)$ have a natural
structure of a torsor
over $\Jac^0(X)$.

\begin{lemma}\label{lem:JkJl} \cite[Theorem 4.5.2]{Cal00}
Let $X$ be an elliptic K3 surface, and let $k,\ell\in \Z$. Then we have $\Jac^k(\Jac^\ell(X)) \simeq \Jac^{k\ell}(X)$ as torsors over $\Jac^0(X)$.
\end{lemma}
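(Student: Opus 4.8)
The plan is to reduce the claim $\Jac^k(\Jac^\ell(X)) \simeq \Jac^{k\ell}(X)$ to a statement about Brauer classes via the Ogg--Shafarevich dictionary established earlier, rather than attempting a direct geometric construction of the isomorphism. The key observation is that all surfaces in sight are naturally torsors over the single surface $S := \Jac^0(X)$: by Example \ref{ex:X-J-torsor} we have $\Jac^0(\Jac^\ell(X)) = \Jac^0(X) = S$, so both $\Jac^k(\Jac^\ell(X))$ and $\Jac^{k\ell}(X)$ carry canonical $S$-torsor structures. By Theorem \ref{thm:ell--br} (or more directly Proposition \ref{prop:functorialitytorsors} applied with $S = S'$ and $\beta = \mathrm{id}$), two $S$-torsors are isomorphic as torsors precisely when their associated Brauer classes in $\Br(S)$ coincide. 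Hence it suffices to verify the equality of Brauer classes
\[
\alpha_{\Jac^k(\Jac^\ell(X))} = \alpha_{\Jac^{k\ell}(X)} \in \Br(S).
\]

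To compute these, I would apply Lemma \ref{lem:Br-Sha-WC}(ii), which states that for any $S$-torsor $Y$ and any integer $n$ one has $\alpha_{\Jac^n(Y)} = n \cdot \alpha_Y$. Writing $\alpha_X \in \Br(S)$ for the Brauer class of $X$ itself, a first application gives $\alpha_{\Jac^\ell(X)} = \ell \cdot \alpha_X$. Crucially, $\Jac^\ell(X)$ is again an $S$-torsor with the \emph{same} base surface $S = \Jac^0(X)$, so Lemma \ref{lem:Br-Sha-WC}(ii) applies a second time with $Y = \Jac^\ell(X)$ and $n = k$, yielding
\[
\alpha_{\Jac^k(\Jac^\ell(X))} = k \cdot \alpha_{\Jac^\ell(X)} = k \cdot (\ell \cdot \alpha_X) = (k\ell) \cdot \alpha_X.
\]
On the other hand, a single application gives $\alpha_{\Jac^{k\ell}(X)} = (k\ell) \cdot \alpha_X$, so the two classes agree. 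This forces the torsor isomorphism $\Jac^k(\Jac^\ell(X)) \simeq \Jac^{k\ell}(X)$ over $\Jac^0(X)$ by Proposition \ref{prop:functorialitytorsors}.

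The main point requiring care — and the step I expect to be the genuine content rather than a formality — is confirming that the torsor structures are compatible, namely that iterating $\Jac$ does not change the base Jacobian: one needs $\Jac^0(\Jac^\ell(X)) = \Jac^0(X)$ as pointed elliptic surfaces, together with the \emph{functoriality} of the identification $\Br(S) \simeq \Sha(S)$ that makes Lemma \ref{lem:Br-Sha-WC}(ii) iterable with a fixed base. Both are supplied by the framework above (Example \ref{ex:X-J-torsor} and Lemma \ref{lem:functorialitytorsors}), so the argument is essentially bookkeeping once these identifications are in place. Since this lemma is attributed to \cite[Theorem 4.5.2]{Cal00}, I would note that the proof really just repackages Calabi--Yau--type additivity of the Brauer class under $\Jac$, and the only subtlety is ensuring all classes live in the one group $\Br(\Jac^0(X))$ so that scalar multiplication composes as $k \cdot (\ell \cdot \alpha_X) = (k\ell)\cdot \alpha_X$.
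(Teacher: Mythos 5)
Your proof is correct and follows essentially the same route as the paper: the paper likewise applies Lemma \ref{lem:Br-Sha-WC}(ii) twice to get $[\Jac^k(\Jac^\ell(X))] = k\cdot[\Jac^\ell(X)] = k\ell\cdot[X]$ in $\Sha(\Jac^0(X))$ and concludes the torsor isomorphism from equality of classes. Your extra care about $\Jac^0(\Jac^\ell(X)) = \Jac^0(X)$ is exactly the point the paper delegates to Example \ref{ex:X-J-torsor}.
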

\begin{proof}
By Lemma \ref{lem:Br-Sha-WC}, 
in the Tate--Shafarevich group of $\Jac^0(X)$, we have $[\Jac^k(\Jac^\ell(X))] = k\cdot [\Jac^\ell(X)] = k\ell\cdot [X] = [\Jac^{k\ell}(X)].$ In particular, we have $\Jac^k(\Jac^\ell(X)) \simeq \Jac^{k\ell}(X)$ as torsors over $\Jac^0(X)$.
\end{proof}

Let $t$ be the multisection index of $X$.
We are especially interested in those Jacobians
for which $\gcd(k, t) = 1$.
We call these \emph{coprime Jacobians} of $X$.
By Theorem \ref{thm:coprimejacfm} below,
every coprime Jacobian is a Fourier-Mukai
partner of $X$.
For all $k \in \Z,$ we have well-known 
isomorphisms over $\PP^1$:
\begin{equation}\label{eq:standardjacobianisomorphisms}
\Jac^{k + t}(X) \simeq \Jac^k(X), \quad 
\Jac^{-k}(X) \simeq \Jac^k(X).
\end{equation}
Here the first isomorphism follows by adding the multisection on the generic fibre, and then spreading out
as in Remark \ref{rem:Pic0-generic},
and the second isomorphism can be obtained, by the same token, from the dualization of line bundles, or alternatively deduced from Proposition \ref{prop:functorialitytorsors}
with $\beta$ acting by $-1$ on the fibres
(see Example \ref{ex:dual-torsor}).

We see that there are 
at most $\phi(t)/2$ 
isomorphism classes of coprime Jacobians of $X$.
The goal of the next result
is to 
be able to compute this number precisely, see \eqref{eq:jspec-formula}
for what this count will look like.

\begin{proposition}\label{prop:jacobians-isom}
Let $X \to \PP^1$ be an elliptic K3 surface of multisection index $t > 2$. Then $\Jac^k(X) \simeq \Jac^\ell(X)$ as $\Jac^0(X)$-torsors 
if and only if $k \equiv \ell \pmod{t}$.
Furthermore there exist subgroups $B_X \subset \wt{B}_X \subset (\Z/t\Z)^*$,
such that for $k, \ell \in (\Z/t\Z)^*$ we have
\[
\Jac^k(X) \simeq \Jac^\ell(X) \text{ over $\PP^1$} \iff k \ell^{-1} \in B_X,
\]
and
\[
\Jac^k(X) \simeq \Jac^\ell(X) \text{ as elliptic surfaces} \iff k \ell^{-1} \in \widetilde{B}_X.
\]
Furthermore,
$B_X$
is a cyclic group of order $2$, $4$ or $6$,
containing $\{ \pm 1\}$
and the case $B_X \simeq \Z/4\Z$ (resp. the case
$B_X \simeq \Z/6\Z$) can occur only if $X$
is an isotrivial elliptic fibration with $j$-invariant
$j = 1728$ (resp. $j = 0$).

Finally, if $X$ is $T$-general, then
$B_X = \widetilde{B}_X =\{\pm 1\}$, that is in this
case $\Jac^k(X)$ and $\Jac^\ell(X)$ are isomorphic
over $\PP^1$ if and only if they are isomorphic
as elliptic surfaces if and only if $k \equiv \pm \ell \pmod{t}$.
\end{proposition}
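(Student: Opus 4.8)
The plan is to translate everything into the Brauer group via Corollary \ref{cor:orbitsareisomclasses}, which identifies isomorphism classes of $\Jac^0(X)$-torsors (over $\PP^1$, resp. as elliptic surfaces) with orbits of the group $A_{\PP^1}(S)$ (resp. $A(S,F)$) acting on $\Br(S)$, where $S = \Jac^0(X)$. Since all the $\Jac^k(X)$ share the same zeroth Jacobian $S$ and have Brauer class $\alpha_{\Jac^k(X)} = k\cdot\alpha_X$ by Lemma \ref{lem:Br-Sha-WC}(ii), the whole problem becomes: when do two elements $k\alpha_X$ and $\ell\alpha_X$ of the cyclic group $\langle \alpha_X\rangle \cong \Z/t\Z \subset \Br(S)$ lie in the same orbit under these automorphism groups?

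First I would settle the torsor statement: $\Jac^k(X) \simeq \Jac^\ell(X)$ as $\Jac^0(X)$-torsors means literal equality of Brauer classes $k\alpha_X = \ell\alpha_X$, and since $\alpha_X$ has order exactly $t$ (Lemma \ref{lem:Br-Sha-WC}(iii), as $\ind(X)=t$), this is equivalent to $k \equiv \ell \pmod t$; this also recovers \eqref{eq:standardjacobianisomorphisms}. Next, for the other two equivalences, I would note that a group automorphism $\beta$ preserving the fibre class acts on $T(S)$ by a Hodge isometry, hence acts on $\Br(S) \cong \operatorname{Hom}(T(S),\Q/\Z)$; restricted to the cyclic subgroup $\langle\alpha_X\rangle \cong \Z/t\Z$, this action must be multiplication by some unit in $(\Z/t\Z)^*$. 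The key step is to define
\[
\wt{B}_X = \{\, u \in (\Z/t\Z)^* \mid \beta_*\alpha_X = u\cdot\alpha_X \text{ for some } \beta \in A(S,F)\,\},
\]
and $B_X$ the analogous set using $A_{\PP^1}(S) \subset A(S,F)$; these are the images of the respective automorphism groups under the resulting characters $A(S,F) \to (\Z/t\Z)^*$, hence are subgroups with $B_X \subset \wt{B}_X$. With Corollary \ref{cor:orbitsareisomclasses} in hand, $\Jac^k(X) \simeq \Jac^\ell(X)$ over $\PP^1$ (resp.\ as elliptic surfaces) precisely when $k\alpha_X$ and $\ell\alpha_X$ differ by an element of $B_X$ (resp.\ $\wt{B}_X$), i.e.\ when $k\ell^{-1} \in B_X$ (resp.\ $\wt{B}_X$).

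The main obstacle is pinning down the structure of $B_X$, since this requires controlling the group automorphisms of $S$ over $\PP^1$ fixing the zero-section. By Remark \ref{rem:Pic0-generic} such automorphisms correspond to automorphisms of the generic fibre as an elliptic curve over $\CC(t)$ fixing the origin, which form a cyclic group of order $2$, $4$, or $6$, with orders $4$ and $6$ forcing $j$-invariant $1728$ and $0$ respectively; this is where the isotriviality constraints in the statement come from. I would argue that $-1 \in A_{\PP^1}(S)$ always acts by $-1$ on $\Br(S)$ (Example \ref{ex:dual-torsor}), giving $\{\pm 1\} \subseteq B_X$, and that the character $A_{\PP^1}(S) \to (\Z/t\Z)^*$ is injective on the geometric automorphism group so that $B_X$ inherits cyclic order $2$, $4$, or $6$. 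Finally, for the $T$-general case, Corollary \ref{cor:autfixingfibre} gives $\Aut(X,F) = \{\mathrm{id}\}$ when $t>2$; transporting this through $\Jac^0$ and Lemma \ref{lem:autgroupwithsection} forces every fibre-preserving group automorphism of $S$ to act as $\pm 1$, so both $B_X$ and $\wt{B}_X$ collapse to $\{\pm 1\}$, yielding the stated equivalence $k \equiv \pm\ell \pmod t$.
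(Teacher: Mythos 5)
Your proposal follows essentially the same route as the paper: you define $B_X$ and $\wt{B}_X$ exactly as the images in $(\Z/t\Z)^*$ of the subgroups of $A_{\PP^1}(S)$ and $A(S,F)$ stabilising $\langle \alpha_X\rangle$, reduce everything to orbits on $\Br(S)$ via Corollary \ref{cor:orbitsareisomclasses} together with $\alpha_{\Jac^k(X)} = k\alpha_X$, and constrain $B_X$ through Remark \ref{rem:Pic0-generic} and the classification of automorphisms of an elliptic curve fixing the origin. The torsor statement and the $\{\pm1\}\subseteq B_X$ step are handled correctly (the paper reduces $\Jac^k$ versus $\Jac^\ell$ to $\Jac^{k\ell^{-1}}$ versus $X$ via Lemma \ref{lem:JkJl} and Corollary \ref{cor:Jk-funct}, whereas you act directly on $k\alpha_X$; this is an inessential difference).

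The one step that does not go through as written is the $T$-general case. You invoke Corollary \ref{cor:autfixingfibre} and Lemma \ref{lem:autgroupwithsection}, but both of those results assume Picard rank $2$, which is not a hypothesis of the proposition. More importantly, $\Aut(X,F)=\{\operatorname{id}\}$ concerns automorphisms of $X$ itself, while $\wt{B}_X$ is controlled by group automorphisms $\beta$ of $S=\Jac^0(X)$ satisfying $\beta_*\alpha_X = u\,\alpha_X$; triviality of $\Aut(X,F)$ does not by itself rule out an isomorphism $\Jac^k(X)\simeq\Jac^\ell(X)$ for $k\not\equiv\pm\ell$, so ``transporting through $\Jac^0$'' is not a complete argument. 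The correct (and rank-independent) argument, which is the paper's: such a $\beta$ induces a Hodge isometry $\sigma$ of $T(S)$ preserving $\Ker(\alpha_X)=T(X)$; its restriction to $T(X)$ lies in $G_X=\{\pm\operatorname{id}\}$ by $T$-generality, and since $T(X)\subset T(S)$ has finite index this forces $\sigma=\pm\operatorname{id}$ on $T(S)$, hence $\beta_*=\pm1$ on $\Br(S)$ and $u\equiv\pm1\pmod t$.
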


In the statement
we excluded 
the trivial cases $t = 1, 2$ because
such
elliptic K3 surfaces do not admit non-trivial coprime Jacobians.

Before we give the proof of the proposition, we need to set up some notation.
Let $S$ be an elliptic K3 with a section.
For any subgroup $H \subset A(S,F)$
and any class $\alpha \in \Br(S)$
let $H^{\alpha}$ be the subgroup of $H$ consisting
of elements $\beta \in H$ with the property
$\beta_*(\langle \alpha \rangle) \subset \langle \alpha \rangle$.
Considering the action of $H^\alpha$ 
on $\langle \alpha \rangle = \Z/t\Z$
we get a natural homomorphism $H^\alpha \to (\Z/t\Z)^*$
and we define
\[
\overline{H}^{\alpha} \coloneqq  \Image(H^{\alpha} \to (\Z/t\Z)^*).
\]

\begin{proof}[Proof of Proposition \ref{prop:jacobians-isom}]
Write $S = \Jac^0(X)$. We consider the following subgroups
of $(\Z/t\Z)^*$:
\begin{equation}\label{eq:Bdef}
B_X \coloneqq  \overline{A_{\PP^1}(S)}^{\alpha_X} 
\end{equation}
\begin{equation}\label{eq:tBdef}
\wt{B}_X \coloneqq  \overline{A(S, F)}^{\alpha_X}.
\end{equation}
We have $B_X \subset \wt{B}_X$,
and $-1 \in A_{\PP^1}(S)$ induces
$-1 \in (\Z/t\Z)^*$, in particular $\{ \pm 1\} \subset B_X$. Note that we are assuming $t>2$, hence $-1\not\equiv 1 \pmod{t}$.

By Corollary \ref{cor:Jk-funct}, $\Jac^k(X)$ and $\Jac^\ell(X)$ are isomorphic
over $\PP^1$ if and only if $\Jac^{\ell^{-1}}(\Jac^{k}(X))$ and $\Jac^{\ell^{-1}}(\Jac^{\ell}(X))$ are isomorphic
over $\PP^1$. Here $\ell^{-1}$ is any integer such that $\ell\ell^{-1}\equiv 1\pmod t$. By Lemma \ref{lem:JkJl}, we have $\Jac^{\ell^{-1}}(\Jac^{k}(X))\simeq \Jac^{k\ell^{-1}}(X)$ and $\Jac^{\ell^{-1}}(\Jac^{\ell}(X))\simeq \Jac^{\ell\ell^{-1}}(X)\simeq \Jac^1(X)$ over $\PP^1$, where the last isomorphism follows from \eqref{eq:standardjacobianisomorphisms}. By Corollary \ref{cor:orbitsareisomclasses}, this occurs if and only if $k \ell^{-1} \in B_X$.
By the same argument, $\Jac^k(X)$ and $\Jac^\ell(X)$ are isomorphic
as elliptic surfaces if and only if $\Jac^{k \ell^{-1}}(X)$ and $X$ are isomorphic
as elliptic surfaces if and only if $k \ell^{-1} \in \wt{B}_X$.
The group $B_X$ is a quotient of
a subgroup of $A(S, F)$.
The latter group, by Remark \ref{rem:Pic0-generic}, is isomorphic to the group of group automorphisms of the generic fibre of $S$. Thus, $A(S, F)$ (and hence $B$)
is isomorphic to 
$\Z/2\Z$, unless the
$j$-invariant equals
$1728$ or $0$ in which case $A(S, F)$ (and hence $B_X$) can be $\Z/4\Z$ or $\Z/6\Z$
respectively.

It remains to prove that $B_X = \wt{B}_X = \{\pm 1\}$
if $X$ is $T$-general.
By Proposition \ref{prop:functorialitytorsors}, an isomorphism $X \simeq \Jac^k(X)$ as elliptic surfaces would induce a group automorphism $\beta$ of $S = \Jac^0(X)$ satisfying $\beta_*\alpha_X=k\cdot\alpha_X.$ 
This means that $T(S)$ admits a Hodge isometry $\sigma$, which maps $T(X) = \Ker(\alpha_X)$ to itself.
By $T$-generality, we get $\sigma = \pm \operatorname{id}$ so that
$\beta_* = \pm 1$
and hence $k \equiv \pm 1 \pmod{t}$.
\end{proof}

\begin{corollary} \label{cor:generalcountisequal}
If $A(\Jac^0(X), F) = A_{\PP^1}(\Jac^0(X))$
then isomorphism classes
of coprime Jacobians over $\PP^1$ are the same as isomorphism
classes of coprime Jacobians as elliptic surfaces.
\end{corollary}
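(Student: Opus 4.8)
The plan is to derive the corollary directly from Proposition \ref{prop:jacobians-isom}, which already packages the two isomorphism relations between coprime Jacobians into group theory. Recall from \eqref{eq:Bdef} and \eqref{eq:tBdef} that, writing $S = \Jac^0(X)$, the governing subgroups of $(\Z/t\Z)^*$ are $B_X = \overline{A_{\PP^1}(S)}^{\alpha_X}$ and $\wt{B}_X = \overline{A(S,F)}^{\alpha_X}$, namely the images in $(\Z/t\Z)^*$ of the $\alpha_X$-stabilizing subgroups of $A_{\PP^1}(S)$ and of $A(S,F)$, under the natural action on $\langle \alpha_X \rangle \simeq \Z/t\Z$. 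The proposition asserts that for coprime $k,\ell$ one has $\Jac^k(X) \simeq \Jac^\ell(X)$ over $\PP^1$ precisely when $k\ell^{-1} \in B_X$, and as elliptic surfaces precisely when $k\ell^{-1} \in \wt{B}_X$.

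First I would dispose of the degenerate range $t \le 2$: there $\phi(t) = 1$, and by \eqref{eq:standardjacobianisomorphisms} every coprime Jacobian is already isomorphic to $X$ over $\PP^1$, so both notions of isomorphism class are trivial and there is nothing to check. Assuming therefore $t > 2$, so that Proposition \ref{prop:jacobians-isom} applies, the essential step is the observation that the construction $H \mapsto \overline{H}^{\alpha_X}$ depends only on the group $H$ and the fixed class $\alpha_X$. Hence the hypothesis $A(S,F) = A_{\PP^1}(S)$ feeds the \emph{same} group into this construction in both cases, forcing $B_X = \wt{B}_X$. Consequently, for coprime $k,\ell$ the condition $k\ell^{-1} \in B_X$ and the condition $k\ell^{-1} \in \wt{B}_X$ are identical, so $\Jac^k(X) \simeq \Jac^\ell(X)$ over $\PP^1$ if and only if $\Jac^k(X) \simeq \Jac^\ell(X)$ as elliptic surfaces. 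The two equivalence relations on the set of coprime Jacobians therefore coincide, and with them their partitions into isomorphism classes, which is exactly the assertion.

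I do not expect any genuine obstacle here: the entire geometric and lattice-theoretic content has been front-loaded into Proposition \ref{prop:jacobians-isom}, and what remains is purely formal. The only points requiring (minor) care are the bookkeeping in the trivial range $t \le 2$ and the remark that passing from an equality of ambient automorphism groups to an equality of their $\alpha_X$-stabilizers, and then to an equality of the resulting images in $(\Z/t\Z)^*$, is automatic because one applies the identical homomorphism to the identical subgroup.
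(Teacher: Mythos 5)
Your proof is correct and is essentially the paper's own argument: the paper likewise deduces the corollary from Proposition \ref{prop:jacobians-isom} by noting that the hypothesis $A(\Jac^0(X),F)=A_{\PP^1}(\Jac^0(X))$ forces $B_X=\wt{B}_X$ by construction. Your extra remarks on the trivial range $t\le 2$ and on the formality of the identification are fine but not needed.
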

\begin{proof}
This follows from Proposition \ref{prop:jacobians-isom}
as in this case
$B_X = \wt{B}_X$ by construction.
\end{proof}

Corollary \ref{cor:generalcountisequal} 
applies when singular fibres of $X \to \PP^1$
lie over a non-symmetric set of points $Z\subset \PP^1$,
that is when
$\ol{\beta} \in \Aut(\PP^1)$ 
satisfies $\ol{\beta}(Z) = Z$
only for $\ol{\beta} = \mathrm{id}$.
On the other hand, if $Z$ is symmetric,
and this symmetry can be lifted to an automorphism
of $\Jac^0(X)$, we typically have $B_X \subsetneq \wt{B}_X$.
For an explicit such surface, see 
Example \ref{ex:fewerasellipticthanoverproj}.

\subsection{$j$-special isotrivial elliptic K3 surfaces}

By a $j$-special isotrivial elliptic K3 surface
we mean an elliptic K3 surface
with smooth fibres all 
having $j$-invariant $0$
or $1728$.
\begin{remark}\label{rem:j0}
There exist Picard rank 2 isotrivial K3 surfaces with $j=0$ (see
Example \ref{ex:thechosenone}), however for $j = 1728$ the minimal rank is $10$ for the following reason.
Let $X$ be an isotrivial 
elliptic K3 surface $X$ with $j = 1728$.
The zeroth Jacobian $S$ of $X$ will have a Weierstrass
equation 
$y^2 = x^3 + F(t) x$ with $F(t)$ a degree 8 polynomial in $t$.
We have $\rho(S) = \rho(X)$.
By semicontinuity of the Picard rank we may assume that
$F(t)$ has distinct roots. In this case $S$ has eight
singular fibres, and the Weierstrass equation 
has ordinary double points
at the singularities of the fibre, so 
$S$ is the result of blowing up 
the Weierstrass model at these $8$ points.
Thus, in addition to the fibre class and the section class,
$S$ has $8$ reducible fibres, so $\rho(S) \ge 10$.
Isotrivial K3
surfaces with $j \ne 0, 1728$
are all Kummer
and hence have Picard rank at least $17$ \cite[Corollary 2]{Saw14}.
\end{remark}

We do not claim a direct relationship between the concepts of
$j$-special and $T$-special, 
however
both of these concepts require extra
automorphisms. 

Let $X\to \PP^1$ be an elliptic K3 surface of multisection index $t > 2$, and let $S\to \PP^1$ be its zeroth Jacobian. Let $H = A_{\PP^1}(S)$;
this group is $\Z/2\Z$ unless
$S$ is $j$-special, in which case
it can be equal to $\Z/4\Z$ (resp. $\Z/6\Z$)
when $j = 1728$ (resp. $j = 0$).
By Proposition
\ref{prop:jacobians-isom}
the number
of coprime Jacobians
of $X$ up to isomorphism
over $\PP^1$ equals
$\phi(t)/|{B_X}|$,
which is
\begin{equation}\label{eq:jspec-formula}
\begin{cases}
\phi(t)/2 &\text{if $X\to \PP^1$ is not isotrivial with $j=0$ or $j=1728$;}\\
\phi(t)/4 &\text{for some isotrivial $X$
with $j=1728$, and $H = \Z/4\Z$;}\\
\phi(t)/6 &\text{for some isotrivial $X$
with $j=0$ and $H = \Z/6\Z$.}
\end{cases}
\end{equation}

We now show that the last two cases are indeed
possible. For simplicity we assume that $t = p$,
an odd prime.

\begin{proposition}\label{prop:specialsurfaces}
Let $S\to \PP^1$ be an elliptic K3 surface with a section. 
Assume $S$ is isotrivial with $j=1728$ (resp. $j = 0$)
and $H = \Z/4\Z$ (resp. $H = \Z/6\Z$).
Let $p > 2$ be a prime.
Then $S$ admits a torsor $X \to \PP^1$
of multisection index $p$ with exactly $\frac{\phi(p)}{4}$
(resp. $\frac{\phi(p)}{6}$)
coprime Jacobians up to isomorphism
over $\PP^1$
if and only if
$p\equiv 1\pmod 4$ 
(resp. $p\equiv 1\pmod 3$).
\end{proposition}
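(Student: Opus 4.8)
The plan is to reduce the whole statement to computing the order of the group $B_X \subset (\Z/p\Z)^*$ of Proposition~\ref{prop:jacobians-isom}, and then to realize the extremal value $|B_X| = 4$ (resp. $6$) by choosing the Brauer class to be an eigenvector for the extra fibrewise automorphism. By Proposition~\ref{prop:jacobians-isom} the number of coprime Jacobians of a torsor $X$ over $\PP^1$ equals $\phi(p)/|B_X|$, where $B_X = \overline{H}^{\alpha_X}$ with $H = A_{\PP^1}(S) \simeq \Z/4\Z$ (resp. $\Z/6\Z$). Since $B_X$ is a subgroup of the cyclic group $(\Z/p\Z)^*$ of order $p-1 = \phi(p)$, attaining $|B_X| = 4$ (resp. $6$) forces $4 \mid p-1$ (resp., for odd $p$, $3 \mid p-1$). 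This already yields the ``only if'' direction, with no input from the geometry of $S$.

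For the ``if'' direction the key is the $H$-action on $\Br(S) \simeq \operatorname{Hom}(T(S), \Q/\Z)$. Let $\beta$ generate $H$. First I would show that $\beta$ scales the period $\sigma \in H^{2,0}(S)$ by a primitive fourth (resp. sixth) root of unity $\zeta$: indeed $\beta$ acts on the generic fibre as multiplication by $i$ (resp. by a primitive sixth root of unity), and the holomorphic two-form is the product of the base form with the fibre form. Since a Hodge isometry of $T(S)$ fixing $\sigma$ must be the identity (its invariant lattice is a primitive sub-Hodge-structure whose complexification contains $\sigma$, hence equals $T(S)$ by minimality), the homomorphism $G_S \hookrightarrow \CC^*$ sending $g$ to its scalar action on $\sigma$ is injective. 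Applying this to $\beta^2 + 1$ (resp. $\beta^2 - \beta + 1$), which annihilates $\sigma$ and is integral, I obtain $\beta^2 = -\operatorname{id}$ (resp. $\beta^2 - \beta + 1 = 0$) on $T(S)$. This endows $T(S)$ with the structure of a module over $\Z[i]$ (resp. over $\Z[\zeta_3]$), with $\zeta$ acting as $\beta$.

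Next I would reduce modulo $p$. When $p \equiv 1 \pmod 4$ (resp. $p \equiv 1 \pmod 3$) the prime $p$ splits in $\Z[i]$ (resp. $\Z[\zeta_3]$), so $T(S) \otimes \F_p$ decomposes into two $\beta$-eigenspaces, on which $\beta$ acts by the two roots of $x^2+1$ (resp. $x^2 - x + 1$) in $\F_p^*$, each of order $4$ (resp. $6$). Both eigenspaces are non-zero: after $\otimes_\Q \Q_p$ the $\Q(i)$- (resp. $\Q(\zeta_3)$-) vector space $T(S) \otimes \Q$ becomes free over $\Q_p \times \Q_p$, so the two factors have equal positive dimension. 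Dualizing, I pick a non-zero $\alpha \in \operatorname{Hom}(T(S), \Z/p\Z)$ which is a $\beta$-eigenvector with eigenvalue of order $4$ (resp. $6$). As $p$ is prime, $\alpha$ has order $p$, so by Lemma~\ref{lem:Br-Sha-WC}(iii) the associated torsor $X$ has multisection index $p$; moreover $\beta$ preserves $\langle \alpha \rangle$ and acts on it by an element of order $4$ (resp. $6$), whence $B_X = \overline{H}^{\alpha}$ is cyclic of that order. By Proposition~\ref{prop:jacobians-isom}, $X$ then has exactly $\phi(p)/4$ (resp. $\phi(p)/6$) coprime Jacobians over $\PP^1$.

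The step I expect to be the main obstacle is upgrading the action of $\beta$ on the period line to the full complex-multiplication structure on the \emph{integral} lattice $T(S)$ — that is, proving $\beta^2 = -\operatorname{id}$ (resp. $\beta^2 - \beta + 1 = 0$) rather than merely that $\beta$ scales $\sigma$ — together with verifying that both $\beta$-eigenspaces survive reduction at the split prime $p$. Once the $\Z[i]$- (resp. $\Z[\zeta_3]$-) module structure is established, producing the eigenvector and computing $|B_X|$ are routine. Some care is also needed to match $\beta_*\alpha = \alpha \circ \beta^*$ on $\operatorname{Hom}(T(S),\Q/\Z)$ with the dual of the module action, but since the relevant eigenvalue and its inverse have the same order this does not affect the final count.
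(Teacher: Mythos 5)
Your argument follows the paper's proof essentially step for step: reduce everything to computing $|B_X|$ via Proposition \ref{prop:jacobians-isom} (which gives the ``only if'' direction for free, as you note), endow $T(S)$ with a $\Z[\zeta_{|H|}]$-module structure, split at $p$, and take an eigenvector Brauer class so that $B_X=H$. The only minor divergence is how the cyclotomic relation on $T(S)$ is justified --- the paper shows $(T(S)\otimes\CC)^{\langle\beta\rangle}=0$ by observing that $S/\langle\beta\rangle$ is a rational surface, while you use faithfulness of the $G_S$-action on the period plus minimality of $T(S)$; both are standard and correct, though in the $j=0$ case you should apply that minimality to $\ker(\beta^2-\beta+1)$ (a primitive sublattice whose complexification contains $\sigma$) rather than invoking the isometry statement, since $\beta^2-\beta+1$ is not itself an isometry.
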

\begin{proof}
Existence of such a torsor $X$ implies the required
numerical
condition on $p$ since $4$ (resp. $6$) divides $\phi(p) = p-1$.

Conversely, assume that
$p$ satisfies the numerical condition.
For every non-trivial element
$\beta \in H$, the fixed subspace
$(T(S) \otimes \CC)^{\langle \beta \rangle}$ is zero; this is because
$S/\langle \beta \rangle$ admits a birational $\PP^1$-fibration over $\PP^1$, hence must be a rational surface.
Thus $T(S) \otimes \CC$, considered
as a representation of a cyclic group $H$
is a direct sum of one-dimensional
representations corresponding
to primitive roots of unity of order $|H|$.

This allows to describe $T(S) \otimes \Q$
as an $H$-representation, because irreducible
$\Q$-representations of $H$ are direct sums
of Galois conjugate one-dimensional representations.
Thus in both cases $T(S) \otimes \Q = V^{\oplus \left(\frac{22-\rho}{2}\right)}$,
where $V$ is the $2$-dimensional
representation $\Q[i] = \Q[x]/(x^2 + 1)$
and $\Q[\omega] = \Q[x]/(x^2 + x + 1)$
respectively.
At this point it follows that under
our assumptions the Picard number
$\rho = \rho(X)$ is even.

On the other hand, 
decomposition of the $H$-representation $T(S) \otimes \Q$
is induced from decomposition
of $T(S) \otimes \Z[1/|H|]$,
hence since $|H|$ is coprime to $p$,
it induces a decomposition 
$T(X) \otimes \F_p \simeq V_p^{\oplus \left(\frac{22-\rho}{2}\right)}$
with $V_p$ defined by
$\F_p[x]/(x^2 + 1)$
and $\F_p[x]/(x^2 + x + 1)$
respectively.
Under the numerical condition on $p$,
the corresponding polynomial has roots and
the representation $V_p$
is a direct sum of
two one-dimensional representations
$V_p = \chi \oplus \chi'$.

It follows that the dual
representation
$\operatorname{Br}(S)_{p-tors}$ 
\eqref{eq:Br-tors}
splits into $1$-dimensional representations
$\chi$, $\chi'$ as well. Take a generator
$\alpha \in \Br(S)_{p-tors}$ for 
one of these representations, 
and let $X$ be the corresponding torsor.
The explicit description 
\eqref{eq:Bdef} shows that $B_X = H$.
\end{proof}

For explicit examples of surfaces 
satisfying conditions of Proposition
\ref{prop:specialsurfaces},
see Example \ref{ex:thechosenone}
and Remark \ref{rem:j0}.
Finally we illustrate the difference
between isomorphism over $\PP^1$
and isomorphism as elliptic surfaces.

\begin{example} \label{ex:fewerasellipticthanoverproj}
Consider the $j=0$
isotrivial
elliptic K3 surface $S\to \PP^1$ of Example \ref{ex:thechosenone}, 
and let $\beta \in A(S,F)$ 
be an automorphism of order $11$. 
Note that $\beta \not\in A_{\PP^1}(S)$
so we may have $B_X \subsetneq \wt{B}_X$
in Proposition \ref{prop:jacobians-isom}.
By Lemma \ref{lem:autgroupwithsection}, $\beta$ acts nontrivially on $T(S)$.
As in the proof of Proposition \ref{prop:specialsurfaces},
we deduce that 
for
every prime $p \equiv 1 \pmod{11}$,
the number of coprime Jacobians up to isomorphism as elliptic surfaces
for an eigenvector torsor
will be $11$ times less
than when they are considered up to isomorphism over
$\PP^1$.
\end{example}

\section{Derived equivalent K3s and Jacobians}
\label{sec:DEFM}

The following well-known result goes back to Mukai,
see also {\cite[Remark 5.4.6]{Cal00}}. 
We provide the proof for completeness
as it follows easily from what we have explained so far.

\begin{theorem}
\label{thm:coprimejacfm}
Let $S\to \PP^1$ be an elliptic K3 surface with a section, and let $X\to\PP^1$ be a torsor over $S\to \PP^1$. Let $t\in \Z$ be the multisection index of $X\to \PP^1$. Then $\Jac^k(X)$ is a Fourier-Mukai partner of $X$ if and only if $\operatorname{gcd}(k,t) = 1$.
\end{theorem}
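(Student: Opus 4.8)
The plan is to reduce everything to transcendental lattices via the Derived Torelli Theorem (Theorem \ref{thm:derivedtorelli}), using the explicit description of $T$ for torsors from Lemma \ref{lem:Br-Sha-WC}. First I would set $S = \Jac^0(X)$ and let $\alpha_X \in \Br(S) \simeq \operatorname{Hom}(T(S), \Q/\Z)$ be the Brauer class of the $S$-torsor $X$. By Lemma \ref{lem:Br-Sha-WC}(iii) its order is exactly the multisection index $t$, and by part (iv) there is a Hodge isometry $T(X) \simeq \Ker(\alpha_X)$ realising $T(X)$ as the index-$t$ sublattice $\Ker(\alpha_X : T(S) \to \Q/\Z)$. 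Since $\Jac^k(X)$ is again an $S$-torsor (Example \ref{ex:X-J-torsor}) with Brauer class $k\,\alpha_X$ (Lemma \ref{lem:Br-Sha-WC}(ii)), the same lemma identifies $T(\Jac^k(X))$ with $\Ker(k\,\alpha_X)$, which has index $t/\gcd(k,t)$ in $T(S)$.

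For the forward implication I would assume $\gcd(k,t) = 1$. Then $k$ is a unit modulo $t = \ord(\alpha_X)$, so multiplication by $k$ permutes the cyclic image of $\alpha_X$, and hence $k\,\alpha_X$ and $\alpha_X$ have the same kernel in $T(S)$. This gives Hodge isometries $T(\Jac^k(X)) \simeq \Ker(\alpha_X) \simeq T(X)$, and Theorem \ref{thm:derivedtorelli} then yields $\Db(\Jac^k(X)) \simeq \Db(X)$.

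For the converse I would assume $d \coloneqq \gcd(k,t) > 1$ and argue that the two transcendental lattices cannot even be abstractly isometric. The invariant I would use is the order of the discriminant group, which by Theorem \ref{thm:derivedtorelli} is a derived invariant. Applying the index formula $|A_N| = [L : N]^2\,|A_L|$ to the finite-index inclusions $\Ker(\alpha_X), \Ker(k\,\alpha_X) \subset T(S)$ gives $|A_{T(X)}| = t^2\,|A_{T(S)}|$ and $|A_{T(\Jac^k(X))}| = (t/d)^2\,|A_{T(S)}|$; as $d > 1$ these are unequal, so $\Jac^k(X)$ is not a Fourier-Mukai partner of $X$.

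I expect the only delicate point to be the converse. A priori a derived equivalence need not respect the chosen elliptic fibration, so it would be a mistake to argue directly from the multisection index, which is not a derived invariant. The discriminant order sidesteps this: it is intrinsic to the transcendental lattice, and Lemma \ref{lem:Br-Sha-WC}(iv) makes its dependence on $\gcd(k,t)$ completely transparent.
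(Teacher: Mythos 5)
Your proposal is correct and follows essentially the same route as the paper: the forward direction via $\Ker(k\alpha_X)=\Ker(\alpha_X)$ when $\gcd(k,t)=1$ plus the Derived Torelli Theorem, and the converse by comparing a numerical invariant of the transcendental lattices through the index formula for the sublattices $\Ker(\alpha_X),\Ker(k\alpha_X)\subset T(S)$. The only cosmetic difference is that you phrase the invariant as the order of the discriminant group $|A_{T}|$ where the paper uses $\det(T)$; these coincide, so the arguments are the same.
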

\begin{proof}
Let $\alpha_X\in \operatorname{Br}(S)$ be the Brauer class of $X\to \PP^1$.
From Lemma \ref{lem:Br-Sha-WC}
it is easy to
deduce that
\begin{equation}\label{eq:TXS-det}
\det(T(X)) = t^2 \cdot \det(T(S))    
\end{equation}
(cf \cite[Remark 3.1]{HS05}).

Recall that $t = \ord(\alpha_X)$
by Lemma \ref{lem:Br-Sha-WC}.
We know $T(\Jac^k(X))$ is Hodge isometric to the kernel of $k\cdot\alpha_X: T(S)\to \Z/t\Z$, again by Lemma \ref{lem:Br-Sha-WC}. If $\operatorname{gcd}(k,t) = 1$, then 
$\alpha_X$ and $k\alpha_X$ have the same kernel so that
\[
T(\Jac^k(X)) \simeq \operatorname{ker}(k\cdot \alpha_X) = \operatorname{ker}(\alpha_X) \simeq T(X),
\]
so $\Jac^k(X)$ is a Fourier-Mukai partner of $X$
by the Derived Torelli Theorem.

Let us prove the
converse implication.
From \eqref{eq:TXS-det}, 
we get
that for any $k\in \Z$, we have \[
\frac{\det(T(X))}{\det(T(\Jac^k(X)))} = \left(\frac{\ord(\alpha)}{\ord(k\alpha)}\right)^2 = \gcd(k,\ord(\alpha))^2.
\]
Thus if $X$ and $\Jac^k(X)$
are derived equivalent, then 
the left-hand side equals one by the Derived Torelli Theorem, hence 
$k$ is coprime to $t = \ord(\alpha)$.
\end{proof}

\subsection{Derived elliptic structures}

In this subsection, we set up the theory of derived elliptic structures and Hodge elliptic structures.

\begin{definition}
Let $X$ be a K3 surface.
A \textit{derived elliptic structure} on $X$ is a pair $(Y,\phi)$, where $Y$ is a K3 surface such that $Y$ is derived equivalent to $X$ and $\phi: Y\to\PP^1$ is an elliptic fibration. 
\end{definition}

 We say that two derived elliptic structures are isomorphic if they are isomorphic as elliptic surfaces.
We denote by $\DE(X)$ (resp. $\DE_t(X)$) the set of isomorphism classes of derived elliptic structures on $X$ (resp. derived elliptic structures on $X$ of multisection index $t$).

\begin{lemma}\label{lem:DE-properties}
Let $X$ be a K3 surface. Then we have:

\begin{enumerate}
    \item[(i)] $\DE(X)$ is a finite set;
    
    \item[(ii)] $\DE(X)$ is nonempty if and only if $X$ is elliptic;
    
    \item[(iii)] $\DE_t(X)$ can be nonempty only for
    $t$ such that $t^2$ divides the order
    of the discriminant group $A_{T(X)}$;
    
    \item[(iv)] If $X$ is elliptic
    with $\rho(X) = 2$ and
    multisection index $t$, then every elliptic structure on every Fourier-Mukai partner of $X$
    also has multisection index $t$, that is
    $\DE(X) = \DE_t(X)$.
\end{enumerate}
\end{lemma}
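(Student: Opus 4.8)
The four parts are largely independent, so I would prove them in turn, using the Derived Torelli Theorem (Theorem~\ref{thm:derivedtorelli}) and the genus relation \eqref{eq:A-orthog} as the common input, together with the standard fact that a K3 surface is elliptic if and only if its N\'eron--Severi lattice contains a nonzero isotropic vector, equivalently represents zero over $\Q$ (a primitive isotropic class can be moved into the nef cone using $\pm 1$ and reflections in $(-2)$-classes, see \cite{Huy16}). For (i) I would split the finiteness into two: the Counting Formula gives only finitely many Fourier--Mukai partners $Y$ up to isomorphism, and for each fixed $Y$ the isomorphism classes of elliptic structures correspond to the $\Aut(Y)$-orbits of primitive isotropic nef (fibre) classes, of which there are finitely many by the rational polyhedrality of a fundamental domain for $\Aut(Y)$ acting on the nef cone. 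For (ii) the forward direction is immediate, as $(X,f)$ is itself a derived elliptic structure; for the converse, if $(Y,\phi)\in\DE(X)$ then $\NS(X)$ and $\NS(Y)$ lie in the same genus (as recalled after \eqref{eq:A-orthog}), hence are isometric over $\mathbb{R}$ and over every $\Z_p$, and the Hasse--Minkowski principle transfers isotropy over $\Q$ from $\NS(Y)$ to $\NS(X)$, so $X$ is elliptic.

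For (iii) the heart is a lattice statement: if $L$ is an even lattice with a primitive isotropic vector $F$ whose divisibility $t$, the positive generator of $\{x\cdot F : x\in L\}$, equals the multisection index, then $t^2 \mid |A_L|$. Indeed $\tfrac1t F \in L^*$ has image in $A_L$ of order exactly $t$ by primitivity of $F$, and the cyclic subgroup $H = \langle \tfrac1t F\rangle$ is totally isotropic for the discriminant bilinear form since $F^2 = 0$. Nondegeneracy then gives $|H|\cdot|H^{\perp}| = |A_L|$ with $H \subseteq H^{\perp}$, whence $t^2 = |H|^2 \mid |A_L|$. Applying this with $L = \NS(Y)$ for $(Y,\phi)\in\DE_t(X)$, and using $|A_{\NS(Y)}| = |A_{T(Y)}| = |A_{T(X)}|$ via \eqref{eq:A-orthog} and derived equivalence, yields $t^2 \mid |A_{T(X)}|$.

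For (iv), Proposition~\ref{prop:neronseveriranktwo} gives $\NS(X) \simeq \Lambda_{d,t}$, and Lemma~\ref{lem:lambdabasics} shows that any Fourier--Mukai partner $Y$ has $\NS(Y) \simeq \Lambda_{e,t}$ with the \emph{same} $t$. It then remains to compute, directly in $\Lambda_{e,t}$, the divisibility of each of the two primitive isotropic vectors $F$ and $F' = \tfrac{1}{\gcd(e,t)}(tH - eF)$; a short computation gives $\{D\cdot F\} = \{D\cdot F'\} = t\Z$, so every primitive isotropic class has divisibility $t$. Since the fibre class of any elliptic fibration on $Y$ is such a class, its multisection index equals $t$, and therefore $\DE(X) = \DE_t(X)$.

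I expect the main obstacle to be part (i): whereas (iii) and (iv) are self-contained discriminant-group computations and the converse in (ii) needs only the Hasse--Minkowski principle, the finiteness of elliptic fibrations on a fixed K3 surface up to isomorphism is the one step resting on a genuinely nontrivial external input (Sterk's polyhedrality theorem) rather than on the explicit lattice analysis developed in the preceding sections.
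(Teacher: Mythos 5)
Your proposal is correct, and parts (i), (ii) and (iv) follow essentially the paper's own route: finiteness of Fourier--Mukai partners plus Sterk's finiteness of elliptic fibrations for (i); the genus relation and transfer of isotropy for (ii) (the paper only says ``a standard lattice theoretic argument'', which your Hasse--Minkowski step correctly instantiates, and your opening remark that a nonzero isotropic class in $\NS(X)$ can be made nef is the right way to pass from ``represents zero'' to ``elliptic''); and for (iv) both arguments reduce to the fact that $|A_{\NS(Y)}| = |A_{T(X)}| = t^2$ forces every fibration on $Y$ to have multisection index $t$ --- your explicit divisibility computation for $F$ and $F'$ is a slightly more hands-on version of the paper's appeal to Proposition~\ref{prop:neronseveriranktwo}. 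The genuine difference is in (iii): the paper deduces $t^2 \mid |A_{T(X)}|$ from the Ogg--Shafarevich relation $\det T(Y) = t^2 \cdot \det T(\Jac^0(Y))$ (i.e.\ equation \eqref{eq:TXS-det}, which rests on the Brauer-class exact sequence for $T(Y) \subset T(\Jac^0(Y))$), whereas you work entirely on the N\'eron--Severi side: the class $\tfrac1t F$ generates a cyclic isotropic subgroup $H$ of order exactly $t$ in $A_{\NS(Y)}$, and $H \subseteq H^{\perp}$ together with $|H|\cdot|H^{\perp}| = |A_{\NS(Y)}|$ gives $t^2 \mid |A_{\NS(Y)}| = |A_{T(X)}|$. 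Your version is self-contained and avoids the torsor machinery of Section~\ref{sec:Jacobians} entirely; the paper's version has the advantage of identifying the quotient $|A_{T(X)}|/t^2$ as the discriminant of the Jacobian's transcendental lattice, which is reused elsewhere. Both are complete proofs.
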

\begin{proof}
(i) The set of isomorphism classes
of Fourier-Mukai partners
of $X$ is finite \cite[Proposition 5.3]{BM01}, \cite{HLOY02}, 
and each of them
has only finitely many
elliptic structures
up to isomorphism \cite{Ste85}.
It follows that $\DE(X)$ is a finite set.

(ii) If $X$ elliptic, then $X$ with its elliptic structure is an element of $\DE(X)$, hence it is nonempty.
Conversely, if
$\DE(X)$ is nonempty,
then $X$ admits
a Fourier-Mukai partner which is an elliptic K3
surface. Then by the Derived Torelli Theorem
$\NS(X)$ and $\NS(Y)$
are in the same genus, and since $Y$ is elliptic,
the intersection form $\NS(Y)$ 
represents zero, hence a
standard lattice theoretic argument
shows that $\NS(X)$ also represents zero,
and $X$ is elliptic.

(iii)
If $(Y,\phi)$ is a derived elliptic structure of $X$ of multisection index $t$, then we 
have 
\[
|A_{T(X)}| = |A_{T(Y)}| = t^2\cdot |A_{T(\Jac^0(Y))}|
\]
where the first equality follows from the Derived Torelli Theorem and the second one 
can be deduced from \eqref{eq:TXS-det}
(cf \cite[Remark 3.1]{HS05}). 
In particular, $\DE_t(X)$ is empty whenever $t^2$ does not divide the order of $A_{T(X)}$.

(iv) Every 
Fourier-Mukai partner $Y$ of $X$
also has Picard number $\rho(Y) = 2$.
By Proposition \ref{prop:neronseveriranktwo}, the multisection index of every elliptic fibration on $Y$
equals the square root of $|A_{T(Y)}| = |A_{T(X)}| = t^2$.
\end{proof}

We can take coprime Jacobians of 
a derived elliptic structure $(Y,\phi)$, 
which we denote by $\Jac^k(Y,\phi)$. By Lemma \ref{lem:JkJl} and Theorem \ref{thm:coprimejacfm}
this defines a group action of $(\Z/t\Z)^*$ on $\DE_t(X)$.
The set of $(\Z/t\Z)^*$-orbits on
$\DE_t(X)$  parametrizes derived elliptic structures up to taking coprime
Jacobians, and it is sometimes a more natural set to work with.

We now explain Hodge-theoretic analogues of derived elliptic structures. The following definition is motivated by the Derived Torelli Theorem.

\begin{definition}
Let $X$ be a K3 surface.
A \textit{Hodge elliptic structure}
on $X$ is a twisted Jacobian
K3 surface $(S, f, \alpha)$
\textup(see Definition
\ref{def:twisted-jacobian-K3}\textup)
such that there exists a Hodge isometry
$\Ker(\alpha)\simeq T(X)$.
\end{definition}

The index of a Hodge elliptic structure
is defined
to be
the order of its Brauer class $\alpha$.
An isomorphism of Hodge elliptic structures
$(S, f, \alpha)$, $(S', f', \alpha')$
is an isomorphism $\gamma: S \to S'$
of elliptic surfaces
such that $\gamma_*(\alpha) = \alpha'$.
We denote by $\HE(X)$ the set of isomorphism classes of Hodge elliptic structures on $X$.
We write $\HE_t(X)$ for the set of isomorphism classes
of Hodge elliptic structures of index $t$.
The operation $k*(S,f,\alpha) = (S,f,k\alpha)$ defines a group action of $(\Z/t\Z)^*$ on $\HE_t(X)$.

\begin{example}
Let $X$ be an elliptic K3 surface
of Picard rank $2$ and multisection index $t$.
Let $(S, f, \alpha)$ be a Hodge elliptic structure on $X$. Since the discriminant of $X$ equals $t^2$,
from the sequence
\[
0 \to T(X) \to T(S) \to \Z/t\Z \to 0,
\]
we deduce that $T(S)$ is unimodular.
Thus $S$ is an elliptic K3 surface
of Picard rank two,
and it has a unique elliptic fibration, which has a unique section (see Lemma \ref{lem:twofibrations-numerical}).
We see that in the Picard rank two case
$f$ can be excluded from the data of a Hodge elliptic structure and
we have
a bijection
\begin{equation}\label{eq:HE-rk2}
\HE_t(X) = \{ (S, \alpha) \}/ \simeq,
\end{equation}
with isomorphisms understood as isomorphisms between K3 surfaces respecting the Brauer classes.
\end{example}

\begin{proposition}\label{prop:det--brt}
Let $X$ be a K3 surface
and let $t$ be a positive integer. 
Then the bijection $\EllK3 \simeq \BrK3$ of Theorem \ref{thm:ell--br} induces a $(\Z/t\Z)^*$-equivariant bijection $\DE_t(X)\simeq \HE_t(X)$.
\end{proposition}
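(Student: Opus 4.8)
The plan is to show that the bijection $\EllK3 \to \BrK3$ of Theorem \ref{thm:ell--br}, which sends $(Y,\phi) \mapsto (\Jac^0(Y), \Jac^0(\phi), \alpha_Y)$, restricts to a bijection between $\DE_t(X) \subseteq \EllK3$ and $\HE_t(X) \subseteq \BrK3$, and that this restriction is $(\Z/t\Z)^*$-equivariant. Since Theorem \ref{thm:ell--br} already supplies a bijection, I only need to check three things: that the defining condition of $\DE_t(X)$ corresponds to that of $\HE_t(X)$, that equivariance holds, and that the notions of isomorphism used on the two sides agree after restriction.

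To match the conditions, observe that $(Y,\phi) \in \EllK3$ lies in $\DE_t(X)$ exactly when $Y$ is derived equivalent to $X$ and $\phi$ has multisection index $t$. By the Derived Torelli Theorem \ref{thm:derivedtorelli}, the first condition amounts to a Hodge isometry $T(Y) \simeq T(X)$, and by Lemma \ref{lem:Br-Sha-WC}(iv) there is a canonical Hodge isometry $T(Y) \simeq \Ker(\alpha_Y)$; thus $Y$ is derived equivalent to $X$ if and only if $\Ker(\alpha_Y) \simeq T(X)$, which is precisely the condition for $(\Jac^0(Y), \Jac^0(\phi), \alpha_Y)$ to be a Hodge elliptic structure on $X$. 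By Lemma \ref{lem:Br-Sha-WC}(iii) the multisection index of $Y$ equals $\ord(\alpha_Y)$, the index of the associated Hodge elliptic structure, so the index condition matches as well. Applying the inverse bijection, which assigns to $(S,f,\alpha)$ the torsor over $S$ corresponding to $\alpha$ under \eqref{eq:Br-Sha}, together with the same lemmas, gives the reverse inclusion, so the restriction is a bijection of underlying sets.

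For $(\Z/t\Z)^*$-equivariance, the action on $\DE_t(X)$ sends $(Y,\phi)$ to $\Jac^k(Y,\phi)$, which the bijection carries to $(\Jac^0(\Jac^k(Y)), \Jac^0(\Jac^k(\phi)), \alpha_{\Jac^k(Y)})$. By Example \ref{ex:X-J-torsor} the zeroth Jacobian is unchanged, $\Jac^0(\Jac^k(Y)) = \Jac^0(Y)$ with the same fibration, and by Lemma \ref{lem:Br-Sha-WC}(ii) we have $\alpha_{\Jac^k(Y)} = k\cdot\alpha_Y$. Hence the image equals $(\Jac^0(Y), \Jac^0(\phi), k\alpha_Y) = k*(\Jac^0(Y), \Jac^0(\phi), \alpha_Y)$, which is the prescribed action on $\HE_t(X)$.

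The point requiring the most care is that $\HE_t(X)$ is defined using isomorphisms of elliptic surfaces $\gamma$ with $\gamma_*\alpha = \alpha'$, whereas Theorem \ref{thm:ell--br} is phrased with $\BrK3$-isomorphisms, which are group isomorphisms $\beta$ (preserving the zero-section) with $\beta_*\alpha = \alpha'$. These two equivalence relations coincide: given an elliptic surface isomorphism $\gamma: S \to S'$ between elliptic K3 surfaces with a section, composing with the translation automorphism of $S'$ carrying $\gamma(0_S)$ to $0_{S'}$ produces a group isomorphism $\beta$, and since translations are symplectic they act trivially on $T(S')$ and hence on $\Br(S') \simeq \operatorname{Hom}(T(S'),\Q/\Z)$, so $\beta_*\alpha = \gamma_*\alpha$. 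Thus $\gamma$ and $\beta$ induce the same identification of Brauer classes, the two isomorphism relations agree on the image, and the restricted bijection descends to a bijection $\DE_t(X) \simeq \HE_t(X)$ of the stated quotient sets.
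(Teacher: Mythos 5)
Your proof is correct and follows essentially the same route as the paper: both restrict the bijection of Theorem \ref{thm:ell--br} by matching the membership conditions via the Derived Torelli Theorem and Lemma \ref{lem:Br-Sha-WC}(iii),(iv), and both get equivariance from $\alpha_{\Jac^k(Y)}=k\alpha_Y$ in Lemma \ref{lem:Br-Sha-WC}(ii). Your final paragraph reconciling the group-isomorphism relation on $\BrK3$ with the elliptic-surface-isomorphism relation on $\HE_t(X)$ (by composing with a translation, which fixes the holomorphic two-form and hence acts trivially on $T(S')$ and on $\Br(S')$) is a valid extra verification of a point the paper's proof leaves implicit.
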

\begin{proof}
First of all note that by definition $\DE_t(X)$
is a subset of $\EllK3$
consisting of isomorphism classes $(Y, \phi)$
with $Y$ derived equivalent to $X$ and $\phi$
having a multisection index $t$.
Similarly, $\HE_t(X)$ is a subset of $\BrK3$
consisting of $(S, f, \alpha)$
such that $\ord(\alpha) = t$ and $\Ker(\alpha) \simeq T(X)$.
If $(Y, \phi) \in \EllK3$, then
by Lemma \ref{lem:Br-Sha-WC},
$(Y, \phi)$ belongs to $\DE_t(X)$
if and only if the
corresponding triple $(\Jac^0(Y), \Jac^0(\phi), \alpha_Y) \in \BrK3$ 
belongs to $\HE_t(X)$.

The $(\Z/t\Z)^*$-equivariance of the map is a direct consequence of the fact that $k\alpha_Y = \alpha_{\Jac^k(Y)}$, which holds again
by Lemma \ref{lem:Br-Sha-WC}.
\end{proof}

\begin{definition} Let $T$ be a lattice.
For $t\in \Z$, we write $\II_t(A_{T})$ for the set of cyclic, isotropic subgroups of order $t$ in $A_{T},$ and we write $\tII_t(A_T)$ for the set of isotropic vectors of order $t$ in $A_{T}.$
\end{definition}
For a K3 surface $X$, 
there is a natural action of $G_X$, on
$\II_t(A_{T(X)})$ and $\tII_t(A_{T(X)})$. 
Let $(S,f,\alpha)$ be  
a Hodge elliptic structure on $X$
of index $t$. 
There is a unique isomorphism $r_\alpha:\Z/t\Z\simeq T(S)/\Ker(\alpha)$ such that the diagram 
\begin{equation}\label{eq:singledoutgenerator}
    \xymatrix@C=8pt@R=17pt{
    &T(S) \ar[dr] \ar[dl]_{\alpha} & \\
    \Z/t\Z \ar[rr]_-{r_\alpha}^-\sim && T(S)/\Ker(\alpha)
    }
\end{equation}
commutes. In particular, the Brauer class $\alpha$ singles out a generator $r_\alpha(\ol{1})$ of $T(S)/\Ker(\alpha)$. Fix any Hodge isometry $T(X) \simeq \Ker(\alpha)$. 
The natural inclusion $T(S)/T(X)\subset A_{T(X)}$ allows us to view $r_\alpha(\ol{1})$ as an element of $A_{T(X)}$, which we denote by $w_\alpha$. We denote the subgroup of $A_{T(X)}$ generated by $w_\alpha$ by $H_\alpha$. Note that $w_\alpha$, and hence $H_\alpha$, is only well-defined up to the $G_X$ action on $A_{T(X)}$, since its construction depends on the original
choice of Hodge isometry $T(X)
\simeq \Ker(\alpha)$. On the other hand isomorphic Hodge elliptic structures
on $X$
give rise to 
isotropic vectors in the same $G_X$-orbit by Lemma \ref{lem:overlatsubgrcorresp}.
We define the map
\begin{equation}\label{eq:a-map}
w:\HE_t(X) \to \tII_t(A_{T(X)})/G_X,
\quad w(S,f,\alpha) = w_\alpha.
\end{equation}

The operation $k*w = k^{-1}w$, where $k^{-1}$ is an inverse to $k$ modulo $t$, defines a group action of $(\Z/t\Z)^*$ on $\tII_t(A_T)/G_T$.

\begin{lemma}\label{lem:brt--ita}
The map \eqref{eq:a-map}
is $(\Z/t\Z)^*$-equivariant. 
\end{lemma}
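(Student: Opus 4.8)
The plan is to trace through the construction of $w_\alpha$ from the diagram \eqref{eq:singledoutgenerator} and show that replacing $\alpha$ by $k\alpha$, for a unit $k \in (\Z/t\Z)^*$, multiplies the singled-out generator by $k^{-1}$. First I would observe that since $k$ is invertible modulo $t$, the two maps $\alpha, k\alpha : T(S) \to \Z/t\Z$ have the same kernel, so $\Ker(k\alpha) = \Ker(\alpha)$ and the target $T(S)/\Ker(\alpha)$ of the commutative triangle \eqref{eq:singledoutgenerator} is literally the same group for both classes. In particular I may use one and the same Hodge isometry $T(X) \simeq \Ker(\alpha) = \Ker(k\alpha)$ to compute both $w_\alpha$ and $w_{k\alpha}$, so that $T(S)/T(X)$ is identified with the same subgroup of $A_{T(X)}$ in both cases.

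Next I would compare the two isomorphisms $r_\alpha, r_{k\alpha}\colon \Z/t\Z \to T(S)/\Ker(\alpha)$. Writing $q\colon T(S) \to T(S)/\Ker(\alpha)$ for the common quotient map, their defining relations are $r_\alpha \circ \alpha = q$ and $r_{k\alpha} \circ (k\alpha) = q$. Since $k\alpha$ is the composite of $\alpha$ with multiplication by $k$ on $\Z/t\Z$, and since $\alpha$ is surjective (its order is exactly $t$, so its image is all of $\Z/t\Z$), cancelling $\alpha$ gives $r_{k\alpha} \circ (k\cdot{-}) = r_\alpha$, whence $r_{k\alpha}(\ol{1}) = r_\alpha(\ol{k^{-1}}) = k^{-1}\,r_\alpha(\ol{1})$. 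Both generators are viewed inside $A_{T(X)}$ through the same identification by the previous paragraph, so this yields $w_{k\alpha} = k^{-1} w_\alpha$ in $A_{T(X)}$, on the nose.

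Finally, passing to $G_X$-orbits this reads $w(S,f,k\alpha) = k^{-1}\cdot w(S,f,\alpha)$, which is precisely the action $k * w(S,f,\alpha)$ on $\tII_t(A_{T(X)})/G_X$; hence $w$ is $(\Z/t\Z)^*$-equivariant. The only point requiring genuine care — and the reason the codomain is taken up to $G_X$ — is that $w_\alpha$ depends on the chosen Hodge isometry $T(X) \simeq \Ker(\alpha)$. The argument sidesteps this by using the same isometry for $\alpha$ and $k\alpha$, which is legitimate exactly because the two kernels coincide; this reduces the whole statement to the one-line diagram chase giving $r_{k\alpha}(\ol{1}) = k^{-1} r_\alpha(\ol{1})$, which I expect to be the only substantive step.
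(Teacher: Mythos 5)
Your proof is correct and follows essentially the same route as the paper's: both reduce the statement to the identity $r_{k\alpha}(\ol{1}) = k^{-1} r_\alpha(\ol{1})$ extracted from the commutative triangle \eqref{eq:singledoutgenerator}, and then conclude $w_{k\alpha} = k^{-1} w_\alpha = k * w_\alpha$. Your additional observation that $\Ker(k\alpha)=\Ker(\alpha)$, so one may use the same Hodge isometry $T(X)\simeq\Ker(\alpha)$ for both classes, is a useful explicit justification of a point the paper leaves implicit.
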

\begin{proof}

Recall from Lemma \ref{lem:Br-Sha-WC}(ii) that $\alpha_{\Jac^k(Y)} = k\cdot \alpha_Y$ in $\Br(\Jac^0(Y))$ for all $k\in \Z$. It follows from 
\eqref{eq:singledoutgenerator}
that we have 
$r_{k\alpha} = k^{-1}r_\alpha$.
Thus from the definitions we get
\[
w_{k\alpha} = r_{k\alpha}(\ol{1}) = 
k^{-1}r_{\alpha}(\ol{1}) = 
k^{-1}w_\alpha = k*w_{\alpha},
\]
which means
that the map $w$ is equivariant. 
\end{proof}

Proposition \ref{prop:det--brt}
and Lemma \ref{lem:brt--ita}
give rise to the following commutative diagram
with the vertical arrows being quotients by the corresponding
$(\Z/t\Z)^*$-actions:

\begin{equation}\label{eq:main-maps}
    \xymatrix{
    \DE_t(X) \ar[r]^-\sim \ar[d] & \HE_t(X) \ar[d] \ar[r]^-w & \tII_t(A_{T(X)})/G_X \ar[d]\\
    \DE_t(X) / (\Z/t\Z)^* \ar[r]^\sim
    & \HE_t(X) / (\Z/t\Z)^* \ar[r] & \II_t(A_{T(X)})/G_X
    }
\end{equation} 

For $(Y,\phi)$ a derived elliptic structure of $X$,
we consider $w_\phi
\coloneqq w_{\alpha_Y}$,
the image of $(Y, \phi)$
under the composition of maps
in the top row of \eqref{eq:main-maps}. 
In particular, if $f: X\to \PP^1$ is an elliptic fibration with fibre class $F\in \NS(X)$, then by construction, $w_f$ is the C\u{a}ld\u{a}raru class of the moduli space $\Jac^0(X)$
of sheaves with Mukai vector $(0,F,0)$
on $X$, thus by
Lemma \ref{lem:caldararu-class} 
$w_f$ corresponds to
\begin{equation}\label{eq:lagrangianoffibration}
\frac1{t}F \in \II_t(A_{NS(X)})/G_X
\end{equation}
(we can get rid of the minus sign in the formula at this point,
as $-1 \in G_X$).

\subsection{Fourier-Mukai partners in rank 2}
\label{sec:fmpartnersranktwo}
In this subsection, we work with an elliptic K3 surface of Picard rank 2,
     so that by Proposition \ref{prop:neronseveriranktwo} we have 
     $\NS(X) \simeq \Lambda_{d,t}$ given
     by \eqref{matrix:lambda}.
     The following result is one of the reasons
     why it is natural to concentrate on Picard rank two elliptic surfaces.

\begin{lemma}\label{lem:ranktwo--sameindex}
For an elliptic K3 surface $X$ with $\NS(X)\simeq \Lambda_{d,t}$, 
all derived elliptic
structures 
and all Hodge elliptic structures
on $X$
have the same index $t$.
\end{lemma}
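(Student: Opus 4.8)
The statement has two halves, concerning derived elliptic structures and Hodge elliptic structures, and the plan is to prove the Hodge case directly and then use it, via the bijection of Proposition~\ref{prop:det--brt}, to cover the derived case (which is in any event already recorded in Lemma~\ref{lem:DE-properties}(iv)). First I would set down the two numerical facts that the hypothesis $\NS(X) \simeq \Lambda_{d,t}$ supplies: by Proposition~\ref{prop:neronseveriranktwo} the surface $X$ has Picard rank $2$ and multisection index $t$, and by \eqref{eq:A-orthog} together with \eqref{eq:disc-t2} we have $|A_{T(X)}| = |A_{\NS(X)}| = t^2$.

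For the Hodge elliptic structures, I fix a triple $(S,f,\alpha) \in \HE(X)$, write $s = \ord(\alpha)$ for its index, and aim to show $s = t$. The crucial step is a rank count. Since $\alpha \colon T(S) \to \Q/\Z$ has image $\Z/s\Z$, its kernel sits inside $T(S)$ as a sublattice of index $s$, so $\rk T(S) = \rk \Ker(\alpha) = \rk T(X) = 22 - \rho(X) = 20$, forcing $\rho(S) = 2$. As $S$ carries the elliptic fibration $f$ with a section, its multisection index is $1$, and Proposition~\ref{prop:neronseveriranktwo} then gives $\NS(S) \simeq \Lambda_{e,1}$, which has determinant $-1$ and is therefore unimodular; by \eqref{eq:A-orthog} the lattice $T(S)$ is unimodular as well. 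The index-$s$ inclusion $\Ker(\alpha) \subset T(S)$ now yields $|A_{\Ker(\alpha)}| = s^2\,|A_{T(S)}| = s^2$, while the defining Hodge isometry $\Ker(\alpha) \simeq T(X)$ gives $|A_{\Ker(\alpha)}| = |A_{T(X)}| = t^2$. Comparing, $s^2 = t^2$, so $s = t$.

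The step I expect to be the real content is the rank count pinning down $\rho(S) = 2$, together with the resulting unimodularity of $T(S)$: this is exactly where the Picard rank two hypothesis enters, and it is what rules out a proper divisor $s \mid t$ with $s < t$ (such an $s$ would force $|A_{T(S)}| = (t/s)^2 > 1$, i.e. a non-unimodular $T(S)$). Everything after that is routine discriminant bookkeeping. For the derived elliptic structures I would simply invoke Lemma~\ref{lem:DE-properties}(iv); alternatively, since Proposition~\ref{prop:det--brt} furnishes index-preserving bijections $\DE_{s}(X) \simeq \HE_{s}(X)$ for every $s$, the derived and Hodge halves are equivalent, and either one transports to the other across this bijection.
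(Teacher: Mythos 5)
Your proposal is correct and is essentially the paper's proof: the paper's own argument for this lemma is precisely the combination of Lemma~\ref{lem:DE-properties}(iv) and Proposition~\ref{prop:det--brt} that you invoke. Your direct discriminant argument for the Hodge half (unimodularity of $T(S)$ from $\rho(S)=2$ plus the index-$s$ inclusion $\Ker(\alpha)\subset T(S)$ giving $s^2=|A_{T(X)}|=t^2$) is the same computation that underlies Lemma~\ref{lem:DE-properties}(iii)--(iv) via \eqref{eq:TXS-det}, just carried out on the Hodge side of the bijection instead of the derived side.
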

\begin{proof}
This follows
from Lemma
\ref{lem:DE-properties} and Proposition \ref{prop:det--brt}.
\end{proof}

For $X$ as in Lemma \ref{lem:ranktwo--sameindex}, we have $\DE(X) = \DE_t(X)$.
In particular, there is an action of $(\Z/t\Z)^*$ on $\DE(X)$ by taking coprime Jacobians. 
Recall that for a K3 surface $X$ with $\NS(X)\simeq \Lambda_{d,t}$, we have $A_{T(X)}\simeq A_{\NS(X)}(-1)\simeq A_{d,t}(-1)$, 
and it has order $t^2$ by Lemma \ref{lem:lambdabasics}. 
Thus isotropic elements
(resp. 
cyclic isotropic subgroups)
of order $t$
are precisely
Lagrangian elements
(resp. Lagrangian subgroups),
see Definition \ref{def:lagrangianldt}:
\[
\II_t(A_{T(X)}) = \Lagr(A_{T(X)}), \quad 
\tII_t(A_{T(X)}) = \tLagr(A_{T(X)}), \]

The following result is related to \cite[Proposition 3.3]{Ma10}.

\begin{theorem}\label{thm:derivedelliptic-lagrangianelts}
Let $X$ be an elliptic K3 surface of Picard rank 2
and multisection index $t$.
Then the map $w$ \eqref{eq:a-map} is a bijection.
Furthermore, we
have a bijection
\begin{equation}\label{eq:main-bijections}
\DE(X)/(\Z/t\Z)^* \simeq  \Lagr(A_{T(X)})/G_X.
\end{equation}
Action \eqref{eq:involution}
induces a $\Z/2\Z$-action on $\Lagr(A_{T(X)})/G_X$
which under bijection \eqref{eq:main-bijections}
corresponds to the action on $\DE(X)$
swapping 
the two elliptic fibrations
on Fourier-Mukai partners of $X$.
\end{theorem}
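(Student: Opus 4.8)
The plan is to prove the three assertions in sequence using the commutative diagram \eqref{eq:main-maps}: everything reduces to showing that the top map $w$ is a bijection, after which \eqref{eq:main-bijections} and the statement about the involution follow by descent and a lattice computation. By Proposition \ref{prop:det--brt} and Lemma \ref{lem:ranktwo--sameindex} we have a $(\Z/t\Z)^*$-equivariant identification $\DE(X) = \DE_t(X) \simeq \HE_t(X)$, so it suffices to treat $w\colon \HE_t(X) \to \tLagr(A_{T(X)})/G_X$. For surjectivity I would start from a Lagrangian element $w$, i.e. an isotropic element of order $t$; since $|A_{T(X)}| = t^2$ (Lemma \ref{lem:lambdabasics}) the subgroup $\langle w\rangle$ has order $t$, so the overlattice $T(X) \subset L$ attached to it by Lemma \ref{lem:overlatsubgrcorresp} is even, unimodular and of signature $(2,18)$. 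Transferring the weight-two Hodge structure of $T(X)$ to $L$ and realizing $L$ as the transcendental lattice of an elliptic K3 surface $S$ with a section (so that $\NS(S) \simeq U$) via Theorems \ref{thm:surjectivity-period} and \ref{thm:torelli}, the quotient $T(S) = L \to L/T(X) \simeq \Z/t\Z$ defines through \eqref{eq:Br-T} a Brauer class $\alpha$ with $\Ker(\alpha)\simeq T(X)$ and $w(S,f,\alpha) = [w]$.

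For injectivity, suppose the invariants $w_\alpha$ and $w_{\alpha'}$ of two Hodge elliptic structures lie in one $G_X$-orbit, say $g(w_\alpha) = w_{\alpha'}$ with $g$ a Hodge isometry of $T(X)$. Then $g$ sends $H_\alpha$ to $H_{\alpha'}$, so by Lemma \ref{lem:overlatsubgrcorresp} it extends to an isometry $\tilde g\colon T(S) \to T(S')$ of the overlattices; since the Hodge structures on $T(S)$ and $T(S')$ restrict to that of $T(X)$, the extension $\tilde g$ is again a Hodge isometry, which by Lemma \ref{lem:autgroupwithsection} lifts to an isomorphism $\gamma\colon S \to S'$. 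Using the defining diagram \eqref{eq:singledoutgenerator} together with the fact that $\tilde g$ induces $w_\alpha \mapsto w_{\alpha'}$ on the quotients, one checks $\gamma_*\alpha = \alpha'$, so the two structures are isomorphic; this proves the first claim. For \eqref{eq:main-bijections} I would descend along the right-hand column of \eqref{eq:main-maps}: the scalar $(\Z/t\Z)^*$-action and the isometric $G_X$-action on $\tLagr(A_{T(X)})$ commute, and the $(\Z/t\Z)^*$-orbits of Lagrangian elements are precisely the Lagrangian subgroups, so the double quotient equals $\Lagr(A_{T(X)})/G_X$; combined with the equivariant bijection $w$ this yields \eqref{eq:main-bijections}.

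For the third assertion, observe that $\iota$ from \eqref{eq:involution} is defined solely through the primary decomposition and the number of Lagrangian subgroups in each $p$-component (Lemma \ref{lem:lagrangians-prime}); it is therefore intrinsic to the discriminant form, commutes with every isometry and in particular with $G_X$, and so descends to a $\Z/2\Z$-action on $\Lagr(A_{T(X)})/G_X$. To identify it geometrically, take $(Y,\phi) \in \DE(X)$ with fibre class $F_Y$ in $\NS(Y) \simeq \Lambda_{e,t}$: by \eqref{eq:lagrangianoffibration} and Lemma \ref{lem:caldararu-class} its image is $H_\phi = \langle \tfrac1t F_Y\rangle$, while the second primitive isotropic class $F'_Y$ gives $\langle \tfrac1t F'_Y\rangle$, and \eqref{eq:iota-v-vprime} shows that $\iota$ interchanges these two subgroups. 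When $\gcd(e,t) > 1$ the class $F'_Y$ is a genuine second fibration $\phi'$ (Lemma \ref{lem:twofibrations-numerical}), so $\iota$ swaps the two fibrations on $Y$; when $\gcd(e,t) = 1$ one has $\iota = \mathrm{id}$, and by Lemma \ref{lem:twolagrangians-equal} and \eqref{eq:v'-dv} the classes $\tfrac1t F_Y$ and $\tfrac1t F'_Y$ already generate the same Lagrangian subgroup, so the two fibrations coincide in $\DE(X)/(\Z/t\Z)^*$, consistently with $\iota$ acting trivially.

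The step I expect to be the crux is the injectivity of $w$: one must promote the purely lattice-theoretic coincidence $g(w_\alpha) = w_{\alpha'}$ into an isomorphism of the full geometric data, which forces us to combine the overlattice correspondence (Lemma \ref{lem:overlatsubgrcorresp}), the Torelli-type statement (Lemma \ref{lem:autgroupwithsection}), and the compatibility \eqref{eq:singledoutgenerator}, the last being exactly what guarantees that the lifted isomorphism $\gamma$ actually carries $\alpha$ to $\alpha'$ rather than merely identifying the underlying surfaces.
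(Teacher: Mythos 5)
Your proposal is correct and follows essentially the same route as the paper's proof: surjectivity of $w$ via the overlattice attached to a Lagrangian element, unimodularity from $|A_{T(X)}|=t^2$, and the surjectivity of the period map; injectivity via Lemma \ref{lem:overlatsubgrcorresp} to extend the Hodge isometry to the overlattices and Lemma \ref{lem:autgroupwithsection} to realize it geometrically; and the identification of $\iota$ with swapping fibrations via \eqref{eq:lagrangianoffibration} and \eqref{eq:iota-v-vprime}. The only cosmetic differences are that you carry out the fibration-swapping analysis directly on a Fourier--Mukai partner $Y$ with $\NS(Y)\simeq\Lambda_{e,t}$ rather than reducing to $Y=X$, and you add an explicit case split on $\gcd(e,t)$; neither changes the substance.
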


\begin{proof}
We first show that $w$ is bijective. 
We start with bijection \eqref{eq:HE-rk2}.
For the injectivity of $w$, take
$(S,\alpha)$ and $(S',\alpha')$
with $T(X) \simeq \Ker(\alpha) \simeq \Ker(\alpha')$.
Assume
that there exists a Hodge isometry $\sigma\in G_X$ with the property 
$\ol{\sigma}(w_\alpha) = w_{\alpha'}$. Then Lemma \ref{lem:overlatsubgrcorresp} implies
 that $\sigma$ can be extended to a Hodge isometry $T(S)\to T(S')$. Since $S$ and $S'$ have Picard rank 2, Lemma \ref{lem:autgroupwithsection} implies that this Hodge isometry is induced by a group isomorphism $\beta: S\simeq S'.$ From $\ol{\sigma}(w_\alpha) = w_{\alpha'}$, it follows 
that $(S,\alpha)$ and $(S',\alpha')$ are isomorphic.

For surjectivity of $w$, let $u \in A_{T(X)}$ 
be an isotropic vector of order $t$
and $H = \langle u\rangle$.
Via Lemma \ref{lem:overlatsubgrcorresp}, $H$ corresponds to an overlattice $i:T(X)\hookrightarrow T$ which inherits a Hodge structure from $T(X)$, i.e. $i:T(X)\hookrightarrow T$ is a Hodge overlattice. Note that $T$ is unimodular, since the index of $T(X)\subset T$ is $t$ and $A_{T(X)}$ has order $t^2$.
Hence $T\oplus U$ is an even, unimodular lattice of rank 22 and signature $(3,19)$. This means that it is isomorphic to the K3-lattice $\Lambda_{\text{K3}}$. By the surjectivity of the period map
(Theorem \ref{thm:surjectivity-period}), 
we obtain a K3 surface $S$ with $T(S)\simeq T$ and $\NS(S) \simeq U$. Therefore the overlattice $i:T(X) \hookrightarrow T(S)$ is a Hodge overlattice with $T(S)/T(X) = H$. We define the Brauer class $\alpha: T(S)\to H\simeq \Z/t\Z$ where the second map is given by $u\mapsto \ol{1}$. Thus we have constructed a pair $(S, \alpha)$ with C\u{a}ld\u{a}raru class $u$ and $\Ker(\alpha) \simeq T(X)$.

Since $w$ is bijective, the 
diagram
\eqref{eq:main-maps}
immediately implies \eqref{eq:main-bijections}.
The action \eqref{eq:involution}
induces the action on $\Lagr(A_{T(X)})/G_X$
because $\iota$ commutes with $G_X$.
Indeed this can be checked on each primary part \eqref{eq:A-primary},
where there are at most two Lagrangian subgroups
(see Lemma \ref{lem:lagrangians-prime}),
hence
the action of $G_X$ factors through 
the action generated by $\iota_p$.
To show that $\iota$ corresponds to swapping
the elliptic fibrations on Fourier-Mukai
partners $Y$, we can use 
the identification
$\Lagr(A_{T(X)})/G_X = \Lagr(A_{T(Y)})/G_Y$,
and assume $Y = X$. The result
follows from \eqref{eq:iota-v-vprime}
because Lagrangian
subgroups generated
by $\ol{v}$ and $\ol{v'}$
correspond to the two elliptic fibrations
on $X$ via \eqref{eq:main-bijections} by \eqref{eq:lagrangianoffibration}.
\end{proof}

Recall from Lemma \ref{lem:twofibrations-numerical} that a K3 surface $X$ with $\operatorname{NS}(X) \simeq \Lambda_{d,t}$ admits two elliptic fibrations, except when $d \equiv -1\pmod t$, in which case $X$ admits only one elliptic fibration. 
Using Theorem \ref{thm:derivedelliptic-lagrangianelts} 
we can easily 
compare the coprime Jacobians of these two fibrations. 

\begin{example}\label{ex:comparing-two-fibrations}
Let $X$  be an elliptic K3 surface
of Picard rank two with $\NS(X) \simeq \Lambda_{d,t}$
such that
$\gcd(d,t)=1$ and
$d \not\equiv -1\pmod t$.
Let $(X,f)$
and
$(X,g)$ be two elliptic fibrations on $X$ (see Lemma \ref{lem:twofibrations-numerical}),
and let $w_f$ and $w_g$ be their C\u{a}ld\u{a}raru classes,
which are Lagrangian elements in $A_{d,t}$.
By Lemma \ref{lem:lagrangians-prime},
$A_{d,t}$ admits a unique Lagrangian subgroup,
thus
we have $\langle w_f \rangle = 
\langle w_g \rangle$.
By 
Theorem \ref{thm:derivedelliptic-lagrangianelts}
this implies that $f$
and $g$ are coprime Jacobians of each other.
We can make this more precise as follows.
Recall that by  \eqref{eq:lagrangianoffibration},
$w_f$ and $w_g$ correspond to classes
$\ol{v}$, $\ol{v'}$ \eqref{eq:v-vprime}
respectively.
Using \eqref{eq:v'-dv}, we compute
$$w_g=\ol{v'} =  
 -d\ol{v}=-d w_f = -d^{-1}*w_f.$$
Here $d^{-1}$ is the inverse to $d$ modulo $t$.
Thus we have an isomorphism of elliptic surfaces
\[
(X,g) \simeq \Jac^{-d^{-1}}(X,f) \simeq \Jac^{d^{-1}}(X,f)
\]
and $(X,f) \simeq \Jac^d(X,g)$.
 \end{example}

\begin{corollary}\label{cor:JFM}
Let $X$ be an elliptic K3 surface of Picard
rank two.
The set of Fourier-Mukai partners of $X$
considered up to isomorphism as surfaces, 
and up to coprime Jacobians (on every derived
elliptic structure of $X$) is in natural bijection with the 
double quotient
\begin{equation*}\label{eq:doublequotient}
\langle\iota\rangle \backslash \Lagr(A_{T(X)})/G_X.
\end{equation*}
\end{corollary}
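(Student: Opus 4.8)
The plan is to feed the two parts of Theorem~\ref{thm:derivedelliptic-lagrangianelts} into a forgetful map that remembers only the underlying surface of a derived elliptic structure. Write $\sim$ for the equivalence relation on the set $\operatorname{FM}(X)$ of isomorphism classes of Fourier--Mukai partners of $X$ generated by $Y\sim\Jac^k(Y,\phi)$ for every elliptic fibration $\phi$ on $Y$ and every $k$ coprime to $t$; this is exactly the relation ``up to isomorphism and coprime Jacobians'' from the statement. By \eqref{eq:main-bijections} we identify $\Lagr(A_{T(X)})/G_X$ with $\DE(X)/(\Z/t\Z)^*$, and by the last sentence of Theorem~\ref{thm:derivedelliptic-lagrangianelts} the $\iota$-action on the latter is the involution exchanging the two elliptic fibrations of a Fourier--Mukai partner. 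It therefore suffices to construct a natural bijection
\[
\langle\iota\rangle\backslash\bigl(\DE(X)/(\Z/t\Z)^*\bigr)\ \xrightarrow{\ \sim\ }\ \operatorname{FM}(X)/\!\sim .
\]

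First I would introduce the forgetful map $\Psi\colon\DE(X)\to\operatorname{FM}(X)/\!\sim$, $\Psi(Y,\phi)=[Y]_\sim$, and check that it factors through the double quotient. It is constant on $(\Z/t\Z)^*$-orbits, since $\Jac^k(Y,\phi)$ has underlying surface $\Jac^k(Y)\sim Y$, and it is constant on $\iota$-orbits, since exchanging the fibration on $Y$ does not change $Y$. Surjectivity is clear: by Lemma~\ref{lem:DE-properties}(ii) and (iv) every Fourier--Mukai partner $Y$ is again an elliptic K3 surface of Picard rank two and multisection index $t$, so it admits some fibration $\phi$ with $\Psi(Y,\phi)=[Y]_\sim$.

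Injectivity is the heart of the matter and the step I expect to be the main obstacle. Suppose $\Psi(Y_1,\phi_1)=\Psi(Y_2,\phi_2)$, that is $Y_1\sim Y_2$, and choose a chain realising this; by induction on its length it is enough to treat a single step $Y_2\cong\Jac^k(Y_1,\psi)$ with $\psi$ an elliptic fibration on $Y_1$ and $\gcd(k,t)=1$ (isomorphisms being the case $k=1$). The structural input I would use is that a Picard rank two K3 surface carries at most two elliptic fibrations, whose fibre classes are the two primitive isotropic vectors of $\NS\simeq\Lambda_{e,t}$ (Proposition~\ref{prop:neronseveriranktwo}, Lemma~\ref{lem:twofibrations-numerical}). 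Consequently $\psi$ is isomorphic, as an elliptic surface, to $\phi_1$ or to its companion fibration, so the class of $(Y_1,\psi)$ lies in $\{[(Y_1,\phi_1)],\,\iota[(Y_1,\phi_1)]\}$ by the fibration-swap description of $\iota$.

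It remains to move the Jacobian and the auxiliary choice of $\phi_2$ across. By Lemma~\ref{lem:JkJl} the torsor $(Y_2,\Jac^k(\psi))=\Jac^k(Y_1,\psi)$ lies in the same $(\Z/t\Z)^*$-orbit as $(Y_1,\psi)$, hence represents the same double-quotient class as $(Y_1,\phi_1)$; and as $\phi_2$ is again one of the at most two fibrations of $Y_2$, it differs from $\Jac^k(\psi)$ by at most one further $\iota$. Combining these identifications shows $[(Y_2,\phi_2)]=[(Y_1,\phi_1)]$ in the double quotient, which gives injectivity and hence the bijection. Finally I would record that the degenerate case $d\equiv-1\pmod t$ causes no trouble: there $\gcd(d,t)=1$, so by Lemma~\ref{lem:twolagrangians-equal} there is a single Lagrangian subgroup and $\iota$ acts trivially, in agreement with $Y$ then possessing only one elliptic fibration.
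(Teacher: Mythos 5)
Your proof is correct and follows the same route as the paper, which simply deduces the corollary from Theorem \ref{thm:derivedelliptic-lagrangianelts} together with the fact that $\iota$ acts by swapping the (at most two) elliptic fibrations on each Fourier--Mukai partner. The paper's own proof is a one-line citation of that theorem; your write-up merely makes explicit the forgetful map to $\operatorname{FM}(X)/\!\sim$ and the well-definedness, surjectivity, and injectivity checks that the authors leave implicit.
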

\begin{proof}
This is the consequence of the action of $\iota$ 
on 
$\Lagr(A_{T(X)})/G_X$ by swapping the two elliptic fibrations 
as explained in Theorem \ref{thm:derivedelliptic-lagrangianelts}.
\end{proof}

\begin{corollary}\label{cor:HT-DJ}
Let $X$ be an elliptic K3 surface of Picard rank 2. Let $d,t\in \Z$ such that $\NS(X)\simeq \Lambda_{d,t}$, and write $m=\operatorname{gcd}(d,t)$. 

\begin{enumerate}
    \item[(i)] If $m=1$, then $\DE(X)$
    is a single $(\Z/t\Z)^*$-orbit. Explicitly, every
    Fourier-Mukai partner of $X$ will be found
    among the coprime
    Jacobians of a fixed elliptic fibration $(X,f)$.
    \item[(ii)] If $m=p^k$, for a prime $p$ and $k \ge 1$,
    then $\DE(X)$ consists of at most two $(\Z/t\Z)^*$-orbits,
    permuted by the involution $\iota$.
    Explicitly every Fourier-Mukai partner of $X$
    will be found among the coprime Jacobians
    of one of the two elliptic fibrations on $X$.
    \item[(iii)] If $m$ has at least $7$ distinct
    prime factors then $\DE(X)$ has at least three
    $(\Z/t\Z)^*$-orbits.
    In particular, there exist Fourier-Mukai partners of $X$ which are not isomorphic, as surfaces, to any of the Jacobians of elliptic structures on $X$.
\end{enumerate}
\end{corollary}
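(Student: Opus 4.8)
The plan is to derive all three parts from the bijection
$\DE(X)/(\Z/t\Z)^* \simeq \Lagr(A_{T(X)})/G_X$
of Theorem \ref{thm:derivedelliptic-lagrangianelts}, together with the count $|\Lagr(A_{T(X)})| = |\Lagr(A_{d,t})| = 2^{\omega(m)}$ from Proposition \ref{prop:lagrangiancount} (using $A_{T(X)}\simeq A_{d,t}(-1)$, which preserves isotropic subgroups). Thus the number of $(\Z/t\Z)^*$-orbits on $\DE(X)$ is exactly $|\Lagr(A_{T(X)})/G_X|$, and the entire statement reduces to counting $G_X$-orbits on a set of size $2^{\omega(m)}$.

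For part (i), $m=1$ gives $\omega(m)=0$, so $\Lagr(A_{T(X)})$ is a single point and $\DE(X)$ is one $(\Z/t\Z)^*$-orbit; since every Fourier--Mukai partner is itself elliptic (Lemma \ref{lem:DE-properties}) and so underlies a derived elliptic structure, each is a coprime Jacobian of any fixed fibration $(X,f)$. For part (ii), $m=p^k$ gives $\omega(m)=1$, so $|\Lagr(A_{T(X)})|=2$; the two Lagrangian subgroups are $\langle\ol{v}\rangle$ and $\langle\ol{v'}\rangle$, interchanged by $\iota=\iota_p$ (see \eqref{eq:iota-v-vprime}), and by \eqref{eq:lagrangianoffibration} they are the classes of the two elliptic fibrations on $X$. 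Hence $\DE(X)$ has at most two $(\Z/t\Z)^*$-orbits, permuted by $\iota$, and every Fourier--Mukai partner is a coprime Jacobian of one of these two fibrations.

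For part (iii) the key point is to bound the size of the $G_X$-orbits on $\Lagr(A_{T(X)})$. First I would recall, as in the proof of Theorem \ref{thm:derivedelliptic-lagrangianelts}, that on each $p$-primary part there are at most two Lagrangian subgroups (Lemma \ref{lem:lagrangians-prime}), so the $G_X$-action on the $2^{\omega(m)}$-element set $\Lagr(A_{T(X)})$ factors through $\prod_{p\mid m}\langle\iota_p\rangle \simeq (\Z/2\Z)^{\omega(m)}$, which acts simply transitively. Since $G_X$ is cyclic by Lemma \ref{lem:orders-G}, its image in this elementary abelian $2$-group is cyclic, hence of order at most $2$. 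Therefore every $G_X$-orbit has size at most $2$ and
\[
|\Lagr(A_{T(X)})/G_X| \geq 2^{\omega(m)-1},
\]
which for $\omega(m)\ge 7$ is at least $2^{6}=64\ge 3$. Thus $\DE(X)$ has at least three $(\Z/t\Z)^*$-orbits.

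To extract the geometric consequence I would invoke Corollary \ref{cor:JFM}: Fourier--Mukai partners up to isomorphism and coprime Jacobians are classified by the double quotient $\langle\iota\rangle\backslash\Lagr(A_{T(X)})/G_X$, in which the single class of $\langle\ol{v}\rangle$ (equal to that of $\langle\ol{v'}\rangle=\iota\langle\ol{v}\rangle$) accounts for precisely the Jacobians of the two elliptic fibrations on $X$. Since $\iota$ is an involution, three $G_X$-orbits cannot collapse into one $\iota$-class, so $\langle\iota\rangle\backslash\Lagr(A_{T(X)})/G_X$ has at least two elements, and any class distinct from that of $\langle\ol{v}\rangle$ yields a Fourier--Mukai partner not isomorphic, as a surface, to any Jacobian of an elliptic fibration on $X$. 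I expect the main obstacle to be exactly the orbit-size estimate in part (iii): the decisive structural input is that the \emph{cyclic} group $G_X$ can act on $\Lagr(A_{T(X)})$ only through a single involution, because its image in the elementary abelian group $\prod_{p\mid m}\langle\iota_p\rangle$ is forced to be cyclic. Everything else is a formal consequence of Theorem \ref{thm:derivedelliptic-lagrangianelts} and Proposition \ref{prop:lagrangiancount}.
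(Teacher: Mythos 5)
Your proof is correct. Parts (i) and (ii) follow the paper's argument essentially verbatim: Theorem \ref{thm:derivedelliptic-lagrangianelts} plus the Lagrangian count of Proposition \ref{prop:lagrangiancount}, with the two Lagrangians in case (ii) identified with the two fibrations via \eqref{eq:lagrangianoffibration} and \eqref{eq:iota-v-vprime}. (One small point you gloss over in (ii): you should note, as the paper does, that $m=p^k>1$ forces $d\not\equiv -1\pmod t$, so the two elliptic fibrations actually exist by Lemma \ref{lem:twofibrations-numerical}.)

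Where you genuinely diverge is part (iii), and your route is sharper than the paper's. The paper bounds the orbit sizes crudely: since $-1\in G_X$ acts trivially on $\Lagr(A_{T(X)})$ and $|G_X|\le 66$, every $G_X$-orbit has size at most $33$, giving $|\Lagr(A_{T(X)})/G_X|\ge 2^{\omega(m)}/33$, which exceeds $3$ exactly when $\omega(m)\ge 7$ --- this is where the hypothesis ``at least $7$ distinct prime factors'' comes from. You instead exploit the structural observation (already present in the proof of Theorem \ref{thm:derivedelliptic-lagrangianelts}) that the $G_X$-action on $\Lagr(A_{T(X)})=\prod_{p\mid m}\Lagr(A^{(p)}_{d,t})$ factors through the elementary abelian group $\prod_{p\mid m}\langle\iota_p\rangle\simeq(\Z/2\Z)^{\omega(m)}$ acting simply transitively; since $G_X$ is cyclic (Lemma \ref{lem:orders-G}), its image there has order at most $2$, so every orbit has size at most $2$ and $|\Lagr(A_{T(X)})/G_X|\ge 2^{\omega(m)-1}$. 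This is a valid argument and it buys a strictly stronger statement: the conclusion of (iii) already holds for $\omega(m)\ge 3$ (indeed $2^{\omega(m)-1}\ge 4 > 3$ there), not just $\omega(m)\ge 7$. Your handling of the final geometric claim via Corollary \ref{cor:JFM} --- three $G_X$-orbits cannot collapse into the single $\iota$-class containing $\langle\ol{v}\rangle$ and $\langle\ol{v'}\rangle$, and non-coprime Jacobians are never Fourier--Mukai partners --- matches the paper's intent and is spelled out more explicitly than in the paper.
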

\begin{proof}
In each case we use Theorem 
\ref{thm:derivedelliptic-lagrangianelts}
combined with the count of Lagrangians
given in Proposition \ref{prop:lagrangiancount}.

(i) Fix an elliptic fibration $f: X\to \PP^1$ and let $H_f\subseteq A_{T(X)}$ be the corresponding Lagrangian subgroup. Since $m=1$, Proposition \ref{prop:lagrangiancount} implies that $H_f \subseteq A_{T(X)}$ is the only Lagrangian subgroup. 
Therefore all derived elliptic structures are of the form $\Jac^k(X)\to \PP^1$ for $k\in \Z$ coprime to $t$ by Theorem \ref{thm:derivedelliptic-lagrangianelts}.

(ii) By Proposition \ref{prop:lagrangiancount}, $A_{T(X)}$ contains precisely two Lagrangian subgroups. 
The condition $m = p^k$ implies
in particular that $d \not\equiv -1 \pmod{t}$, hence
the surface $X$ admits two elliptic fibrations $f:X\to \PP^1$ and $g:X\to \PP^1$. By Lemma \ref{lem:twolagrangians-equal}, 
arguing like in Example \ref{ex:comparing-two-fibrations}, we see that
the subgroups of $A_{T(X)}$ induced by the two elliptic fibrations are not equal.
Hence $H_f$ and $H_g$ are the only two Lagrangians of $A_{T(X)}$, so every derived elliptic structure on $X$ is either a coprime Jacobian of $f$ or of $g$ by Theorem \ref{thm:derivedelliptic-lagrangianelts}.

(iii)
Assume $\omega(m) \ge 7$.
Since $-1\in G_X$ acts trivially on $\Lagr(A_{T(X)})$ and $|G_X|\leq 66$, 
by Proposition \ref{prop:lagrangiancount},
the set $\Lagr(A_{T(X)})/G_X$ 
has cardinality at least
$2^{\omega(m)}/33 \geq 128/33$, that is there are at least three elements.
The final statement follows from Corollary \ref{cor:JFM}.
\end{proof}

\begin{corollary}\label{cor:DE-count}
Assume that
$X$ is a $T$-general elliptic K3 surface
with $\NS(X) = \Lambda_{d,t}$ with $t > 2$,
and let
$m=\gcd(d,t)$.
Then 
\begin{equation}
\label{eq:DE-final}
|\operatorname{DE}(X)| = 2^{\omega(m)-1}\cdot \phi(t),
\quad
|\DE(X)/(\Z/t\Z)^*|=2^{\omega(m)}.
\end{equation}

In particular, if 
$m$ is not a power of a prime, then $X$ has
Fourier-Mukai partners
not isomorphic, as surfaces,
to any Jacobian of an elliptic structure on $X$.
\end{corollary}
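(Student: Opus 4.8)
The plan is to reduce both counts to the lattice-theoretic formula of Proposition~\ref{prop:lagrangiancount} via the bijections assembled in Theorem~\ref{thm:derivedelliptic-lagrangianelts}, exploiting that $X$ is $T$-general, i.e. $G_X=\{\pm 1\}$ (Definition~\ref{def:Tgeneral}). Since $\NS(X)\simeq\Lambda_{d,t}$, Lemma~\ref{lem:ranktwo--sameindex} gives $\DE(X)=\DE_t(X)$, and $A_{T(X)}\simeq A_{d,t}(-1)$ has order $t^2$, so that $\tII_t(A_{T(X)})=\tLagr(A_{T(X)})$ and $\II_t(A_{T(X)})=\Lagr(A_{T(X)})$. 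The top row of \eqref{eq:main-maps}, with $w$ a bijection, then yields $\DE(X)\simeq\tLagr(A_{T(X)})/G_X$, while Theorem~\ref{thm:derivedelliptic-lagrangianelts} gives $\DE(X)/(\Z/t\Z)^*\simeq\Lagr(A_{T(X)})/G_X$.

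For the second formula I would note that $-1\in G_X$ fixes every subgroup $H\subseteq A_{T(X)}$, since $-H=H$; thus $G_X$ acts trivially on Lagrangian subgroups and $|\Lagr(A_{T(X)})/G_X|=|\Lagr(A_{T(X)})|=2^{\omega(m)}$ by Proposition~\ref{prop:lagrangiancount}. For the first formula the crucial contrast is that $G_X$ acts \emph{freely} on Lagrangian \emph{elements}: such an element $w$ has order exactly $t>2$, so $2w\neq 0$ and $-w\neq w$. Hence $|\tLagr(A_{T(X)})/G_X|=\tfrac12|\tLagr(A_{T(X)})|=\tfrac12\,\phi(t)\,2^{\omega(m)}=2^{\omega(m)-1}\phi(t)$, again by Proposition~\ref{prop:lagrangiancount}. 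This is exactly where the hypothesis $t>2$ is used, and it is the only delicate point in the two counts.

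For the final assertion, assume $m$ is not a prime power, so $\omega(m)\geq 2$ and $|\Lagr(A_{T(X)})/G_X|=2^{\omega(m)}\geq 4$. By Corollary~\ref{cor:JFM}, the Fourier-Mukai partners of $X$ taken up to surface isomorphism and up to coprime Jacobians are parametrized by $\langle\iota\rangle\backslash\Lagr(A_{T(X)})/G_X$; since every $\iota$-orbit has size at most two, this double quotient has at least $2^{\omega(m)-1}\geq 2$ elements. On the other hand, since $m\neq 1$ we have $d\not\equiv-1\pmod t$, so by Lemma~\ref{lem:twofibrations-numerical} $X$ carries two elliptic fibrations, which are interchanged by $\iota$ (Theorem~\ref{thm:derivedelliptic-lagrangianelts}); they and all their coprime Jacobians occupy a single class in the double quotient. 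I would then choose a Fourier-Mukai partner $Y$ lying in a different class. Since, by Corollary~\ref{cor:JFM}, this class is in particular a surface-isomorphism invariant, $Y$ cannot be isomorphic as a surface to any coprime Jacobian of an elliptic structure on $X$; and Theorem~\ref{thm:coprimejacfm} forces any Jacobian of an elliptic structure on $X$ that is a Fourier-Mukai partner to be coprime. Hence $Y$ is a Fourier-Mukai partner isomorphic to no Jacobian of any elliptic structure on $X$.

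The heart of the argument is bookkeeping rather than new input: one must carefully separate the trivial $G_X$-action on subgroups from the free $G_X$-action on elements (the source of the factor $2^{\omega(m)-1}$ versus $2^{\omega(m)}$), and, in the last part, confirm that the coprime Jacobians of the two elliptic structures on $X$ fill only a single $\iota$-orbit, so that any surplus in the double quotient must be realized by a genuinely new Fourier-Mukai partner.
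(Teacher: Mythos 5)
Your proof is correct and follows the paper's overall strategy: both arguments funnel everything through Theorem \ref{thm:derivedelliptic-lagrangianelts}, the Lagrangian count of Proposition \ref{prop:lagrangiancount}, and Corollary \ref{cor:JFM} for the final assertion. The one place you genuinely diverge is the first formula. The paper deduces $|\DE(X)|$ from the second formula by invoking Proposition \ref{prop:jacobians-isom}: for $T$-general $X$ the coprime Jacobians of any derived elliptic structure form exactly $\phi(t)/2$ isomorphism classes, so every $(\Z/t\Z)^*$-orbit in $\DE(X)$ has size $\phi(t)/2$ and one multiplies by the number of orbits. You instead compute $|\DE(X)|=|\tLagr(A_{T(X)})/G_X|$ directly from the bijectivity of $w$, observing that $-1\in G_X$ acts freely on Lagrangian \emph{elements} (order $t>2$ forces $-w\neq w$) while acting trivially on Lagrangian \emph{subgroups}. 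This is purely lattice-theoretic and bypasses the geometric input of Proposition \ref{prop:jacobians-isom}; the two derivations are reconciled by the $(\Z/t\Z)^*$-equivariance of $w$, under which the stabiliser of an order-$t$ element modulo $\pm 1$ is $\{\pm 1\}\subset(\Z/t\Z)^*$, recovering the orbit size $\phi(t)/2$. Your additional remark that Theorem \ref{thm:coprimejacfm} rules out non-coprime Jacobians being Fourier--Mukai partners is a point the paper leaves implicit, and it is needed to pass from ``not a coprime Jacobian'' to ``not any Jacobian'' in the final statement; likewise your observation that $m\neq 1$ forces $d\not\equiv -1\pmod t$, so that both fibrations exist and fill a single class of the double quotient, is correct.
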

\begin{proof}
The second formula
in \eqref{eq:DE-final}
is an immediate
consequence
of Theorem \ref{thm:derivedelliptic-lagrangianelts},
the fact that $G_X = \{\pm 1\}$
acts trivially on $\tLagr(A_{T(X)})$ 
and the Lagrangian count
\eqref{eq:lagr-count}.

By Proposition \ref{prop:jacobians-isom}, coprime 
Jacobians of a $T$-general elliptic 
K3 surface form 
$\phi(t)/2$
isomorphism classes.
In other words, the orbits
of the $(\Z/t\Z)^*$-action
on $\DE(X)$
are all of size $\phi(t)/2$
and the first
formula
in \eqref{eq:DE-final}
follows from the second one.

The final statement follows from Corollary
\ref{cor:JFM}
because if $m$ is not a power of a prime, $\DE(X)/(\Z/t\Z)^*$
has at least four elements by \eqref{eq:DE-final}
which thus
can not form a single $\iota$-orbit.
\end{proof}

\subsection{The zeroth Jacobian}
In this subsection, we apply the results of Section \ref{sec:fmpartnersranktwo} to investigate whether derived equivalent elliptic K3 surfaces have isomorphic zeroth Jacobians. A priori, this is a weaker question than Question \ref{q:HT}. However, we now show that the two questions are equivalent in the very general case. In particular, the answer is negative.
\begin{proposition}\label{prop:isomorphiczerothjacobians}
Let $f: X \to \PP^1$ be an elliptic K3 surface 
 of Picard rank 2, and write $S\coloneqq \Jac^0(X)$. Assume that $T(X)$ has no non-trivial rational Hodge isometries, that is
\begin{equation}\label{eq:hodgeassumption}
O_{\text{Hodge}}(T(X)_\Q) \simeq \Z/2\Z.
\end{equation}
Let $(Y,\phi)$ be a derived elliptic structure on $X$ such that $S'\coloneqq\Jac^0(Y) \simeq S$. Then $(Y,\phi)$ is isomorphic to a coprime Jacobian of $(X,f)$. 
\end{proposition}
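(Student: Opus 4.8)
The plan is to realise both $X$ and $Y$ as torsors over the common surface $S$, to compare their two Brauer classes inside $\Br(S)$, and to use the rational Hodge isometry hypothesis \eqref{eq:hodgeassumption} to force the two classes to generate the same cyclic subgroup. Concretely, I first fix a group isomorphism $S' \simeq S$. This is legitimate: by Lemma \ref{lem:autgroupwithsection}, both $S$ and $S'$ have Picard rank $2$ with unimodular N\'eron--Severi lattice and carry a unique elliptic fibration with a unique section, so any isomorphism $S' \simeq S$ automatically carries fibration and section to fibration and section, hence is a group isomorphism. Transporting the torsor structure along this isomorphism, $Y$ becomes an $S$-torsor with a Brauer class $\alpha_Y \in \Br(S)$; by Lemma \ref{lem:ranktwo--sameindex} together with Lemma \ref{lem:Br-Sha-WC}(iii), both $\alpha_X$ and $\alpha_Y$ have order equal to the common multisection index $t$. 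The target is then to prove $\alpha_Y = k\,\alpha_X$ for some $k$ coprime to $t$; granting this, Lemma \ref{lem:Br-Sha-WC}(ii) identifies $k\,\alpha_X$ with $\alpha_{\Jac^k(X,f)}$, and Proposition \ref{prop:functorialitytorsors} (taking the group isomorphism to be $\mathrm{id}_S$) produces an elliptic surface isomorphism $Y \simeq \Jac^k(X,f)$, a coprime Jacobian of $(X,f)$.

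The heart of the argument compares the kernels $L_X := \Ker(\alpha_X)$ and $L_Y := \Ker(\alpha_Y)$ as finite-index sublattices of $T(S)$ (Lemma \ref{lem:Br-Sha-WC}(iv)). By the Derived Torelli Theorem \ref{thm:derivedtorelli} together with Lemma \ref{lem:Br-Sha-WC}(iv), there are Hodge isometries $L_X \simeq T(X) \simeq T(Y) \simeq L_Y$, so I fix a Hodge isometry $g: L_X \to L_Y$. Since $L_X$ and $L_Y$ are of finite index in $T(S)$, we have $(L_X)_\Q = (L_Y)_\Q = T(S)_\Q$, and since $L_X \simeq T(X)$ as Hodge structures the rational Hodge structure $T(S)_\Q$ is isomorphic to $T(X)_\Q$. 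The $\Q$-linear extension $g_\Q$ is therefore a rational Hodge isometry of $T(S)_\Q \simeq T(X)_\Q$, so hypothesis \eqref{eq:hodgeassumption} gives $g_\Q = \pm \mathrm{id}$. Consequently $L_Y = g(L_X) = g_\Q(L_X) = \pm L_X = L_X$.

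It remains to convert the equality of kernels into the desired relation between Brauer classes. Under the identification $\Br(S) \simeq \operatorname{Hom}(T(S),\Q/\Z)$ of \eqref{eq:Br-T}, the cyclic group $\langle \alpha_X\rangle$ has order $t$ and is contained in the annihilator $\operatorname{Hom}(T(S)/L_X, \Q/\Z) \simeq \Z/t\Z$ of $L_X$; comparing orders shows $\langle \alpha_X \rangle$ equals this annihilator, and likewise $\langle \alpha_Y \rangle$ equals the annihilator of $L_Y$. Since $L_X = L_Y$, we conclude $\langle \alpha_X \rangle = \langle \alpha_Y\rangle$, whence $\alpha_Y = k\,\alpha_X$ for some $k$ coprime to $t$, and the argument finishes as indicated above.

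The main obstacle, and the precise point where \eqref{eq:hodgeassumption} is used, is the passage from an abstract Hodge isometry $g: L_X \to L_Y$ to the literal equality $L_X = L_Y$ of sublattices of $T(S)$: this requires the triviality of \emph{rational} Hodge isometries, not merely $T$-generality of $S$ (triviality of the integral group $G_S$), since the latter would only extend $g$ when it already matched the corresponding isotropic subgroups of $A_{T(X)}$. A secondary point demanding care is the bookkeeping around the group isomorphism $S' \simeq S$, ensuring that $\alpha_Y$ and its kernel are transported consistently, so that the final comparison genuinely takes place inside $\Br(S)$ and $T(S)$.
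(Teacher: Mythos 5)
Your argument is correct and follows essentially the same route as the paper's proof: both reduce to showing that the Hodge isometry relating $T(X)$ and $T(Y)$ inside $T(S)$ becomes $\pm\operatorname{id}$ rationally by hypothesis \eqref{eq:hodgeassumption}, so that $\Ker(\alpha_X)$ and $\Ker(\alpha_Y)$ coincide as sublattices and the two Brauer classes generate the same cyclic subgroup of $\Br(S)$. The extra bookkeeping you supply (transporting the torsor structure to $S$ and the annihilator comparison) is a correct elaboration of what the paper leaves implicit.
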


\begin{proof}
Fixing any Hodge isometry $T(X)\simeq T(Y)$ we view $T(X)\simeq T(Y)\hookrightarrow T(S')$ as an overlattice of $T(X)$. By assumption there exists a Hodge isometry $\beta^*:T(S')\simeq T(S)$ induced by an isomorphism $\beta: S\simeq S'$. Now $\beta^*$ induces the rational Hodge isometry $$T(X)_\Q\simeq T(S')_\Q\overset{\beta^*_\Q}{\simeq}T(S)_\Q\simeq T(X)_\Q$$ which by assumption equals $\pm \operatorname{id},$ hence $\beta^*$ preserves $T(X)$ as a sublattice of $T(S)$ and $T(S')$. In particular, $\beta_*\alpha_X = k\alpha_Y$ for some $k\in \Z$, hence $Y$ is a coprime Jacobian of $X$.
\end{proof}

It is well-known that 
if $X$ 
is
a very general 
$\Lambda_{d,t}$-polarized elliptic K3 surface
then \eqref{eq:hodgeassumption}
is satisfied, see e.g. the argument of
\cite[Lemma 3.9]{SZ20}.
Thus, if $X$ is a very general elliptic K3 surface of Picard rank two with two elliptic fibrations, Proposition \ref{prop:isomorphiczerothjacobians}  allows us to compare the corresponding zeroth Jacobians, which generalises \cite[Proposition 4.8]{Gee05}.

\begin{corollary}\label{cor:Jac0-different}
    Let $X$ be an elliptic K3 surface of Picard rank two with $\NS(X)\simeq \Lambda_{d,t}$ and suppose 
    $d\not\equiv\pm1\mod t,$ so that $X$ admits two non-isomorphic elliptic fibrations by Lemma \ref{lem:twofibrations-numerical}. Assume \eqref{eq:hodgeassumption} holds for $X$. Then the zeroth Jacobians of the two elliptic fibrations on $X$ are isomorphic if and only if $\gcd(d,t)=1$.
\end{corollary}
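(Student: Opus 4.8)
The plan is to translate the isomorphism question for the two zeroth Jacobians into a comparison of the two Lagrangian subgroups of $A_{T(X)}$ attached to the two fibrations, and then to invoke Lemma \ref{lem:twolagrangians-equal}. Write $f,g\colon X\to\PP^1$ for the two elliptic fibrations, which exist because $d\not\equiv-1\pmod t$ (Lemma \ref{lem:twofibrations-numerical}), with fibre classes $F$ and $F'$, and set $S=\Jac^0(X,f)$ and $S'=\Jac^0(X,g)$. Both $S$ and $S'$ are elliptic K3 surfaces of Picard rank two carrying a section, so by Lemma \ref{lem:autgroupwithsection} the statement $S\simeq S'$ is equivalent to the existence of a Hodge isometry $T(S)\simeq T(S')$. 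This is the reduction I would record first.

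Next I would note that $T(S)$ and $T(S')$ are the unimodular Hodge overlattices of $T(X)$ determined, via Lemma \ref{lem:overlatsubgrcorresp}, by the Lagrangian subgroups $H_f=\langle\ol v\rangle$ and $H_g=\langle\ol{v'}\rangle$ coming from \eqref{eq:lagrangianoffibration}, with $\ol v,\ol{v'}$ as in Lemma \ref{lem:twolagrangians-equal}. The crux is to show that $S\simeq S'$ forces $H_f=H_g$. For this I would apply Proposition \ref{prop:isomorphiczerothjacobians} to the derived elliptic structure $(Y,\phi)=(X,g)$: since $\Jac^0(Y)=S'\simeq S=\Jac^0(X,f)$, the proposition (whose hypothesis is exactly \eqref{eq:hodgeassumption}) shows that $(X,g)$ is a coprime Jacobian of $(X,f)$, say $(X,g)\simeq\Jac^k(X,f)$ with $\gcd(k,t)=1$. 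Since passing to the $k$-th coprime Jacobian scales the C\u{a}ld\u{a}raru class by a unit of $\Z/t\Z$ (Theorem \ref{thm:derivedelliptic-lagrangianelts}, cf.\ Lemma \ref{lem:brt--ita}) and hence preserves the Lagrangian subgroup it generates, this yields $H_g=H_f$. By Lemma \ref{lem:twolagrangians-equal} the equality $H_f=H_g$ holds precisely when $m=\gcd(d,t)=1$, giving the forward implication.

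For the converse, assume $m=1$. Then Lemma \ref{lem:twolagrangians-equal} gives $H_f=H_g$ outright, so $T(S)$ and $T(S')$ are the same overlattice of $T(X)$ and are therefore Hodge isometric via the identity on $T(X)$; Lemma \ref{lem:autgroupwithsection} then produces $S\simeq S'$. Alternatively, Example \ref{ex:comparing-two-fibrations} identifies $(X,g)\simeq\Jac^{d^{-1}}(X,f)$ as elliptic surfaces, and since the zeroth Jacobian is unchanged under passing to a Jacobian (Example \ref{ex:X-J-torsor} together with Lemma \ref{lem:functorialjaczero}) one again gets $S'\simeq S$. The only genuinely delicate point is the forward direction, where an a priori arbitrary surface isomorphism $S\simeq S'$ must be shown to respect the embedded sublattice $T(X)\subset T(S),T(S')$; this is exactly what the hypothesis \eqref{eq:hodgeassumption} buys, through Proposition \ref{prop:isomorphiczerothjacobians}, since without it a Hodge isometry between the $T(S)$'s need not descend to one of $T(X)$ and the reduction to comparing Lagrangian subgroups would break down.
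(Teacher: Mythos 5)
Your proposal is correct and follows essentially the same route as the paper: the easy direction uses that for $\gcd(d,t)=1$ the two fibrations are coprime Jacobians of each other (Corollary \ref{cor:HT-DJ} / Example \ref{ex:comparing-two-fibrations}), and the hard direction applies Proposition \ref{prop:isomorphiczerothjacobians} to conclude that an isomorphism of zeroth Jacobians forces the two fibrations to be coprime Jacobians of each other, hence forces their C\u{a}ld\u{a}raru classes to generate the same Lagrangian subgroup, which by Lemma \ref{lem:twolagrangians-equal} happens exactly when $\gcd(d,t)=1$. Your write-up merely spells out a few steps (the reduction via Lemma \ref{lem:autgroupwithsection} and the invariance of $\Jac^0$ under coprime Jacobians) that the paper leaves implicit.
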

\begin{proof}
    If $\gcd(d,t)=1$, the two fibrations on $X$ are coprime Jacobians of each other by Corollary \ref{cor:HT-DJ}, hence the zeroth Jacobians are isomorphic. If $\gcd(d,t)\neq 1$, then by $T$-generality of $X$, the C\u{a}ld\u{a}raru classes of the two fibrations on $X$ are not proportional in $A_{T(X)}$, hence the two fibrations are not coprime Jacobians of each other and the result follows from Proposition \ref{prop:isomorphiczerothjacobians}.
\end{proof}

\begin{remark}\label{rem:DE-Jac0}
In the setting of
Corollary \ref{cor:Jac0-different},
if zeroth Jacobians
are not isomorphic, then they 
are also not derived equivalent.
Indeed, elliptic K3 surfaces with a section do not admit nontrivial Fourier-Mukai partners
\cite[Proposition 2.7(3)]{HLOY02}. 
\end{remark}

\subsection{Question by Hassett and Tschinkel
over non-closed fields} \label{sec:nonclosedfields}

In this subsection, we will use the theory of twisted forms to extend our results to a subfield $k \subset \CC$. 
Let $f:X\to \PP^1$ be a complex elliptic K3 surface with $\operatorname{NS}(X) \simeq \Lambda_{d,t}$. 
Recall that we denote by
$\Aut(X,F)$ the group of automorphisms of $X$ which fix the class of the fibre in $\operatorname{NS}(X)$.
By Corollary \ref{cor:autfixingfibre}, 
the group $\operatorname{Aut}(X,F)$ is trivial whenever $t>2$ and $X$ is $T$-general.

Let $k\subset L$ be a field extension.
An $L$\textit{-twisted form} of an elliptic K3 surface $(Y,\phi: Y \to C)$ over $k$ is any elliptic K3 surface $(Y',\phi': Y' \to C')$ over $k$ such that $(Y_L,\phi_L)$ is isomorphic to $(Y'_L,\phi'_L)$ as elliptic surfaces. 

\begin{lemma}\label{lem:notwistedforms}
Let $(Y,\phi)$ be an elliptic K3 surface over $k$ such that $\Aut(Y_\CC,F) = \left\{\operatorname{id}\right\}.$ 
Then every $\CC$-twisted
form of 
$(Y,\phi)$
is isomorphic to $Y$ as a surface.
\end{lemma}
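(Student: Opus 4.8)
The plan is to run a Galois descent argument, for which the crucial input is that the \emph{geometric} automorphism group of $(Y,\phi)$ as an elliptic surface is trivial. Write $\bar{k} \subset \CC$ for the algebraic closure of $k$, and let $(Y',\phi': Y' \to C')$ be a $\CC$-twisted form of $(Y,\phi)$. First I would note that $\Aut(Y_\CC,F)$ coincides with the group of automorphisms of $Y_\CC$ as an elliptic surface in the sense of Definition \ref{def:ellip-isom}: an automorphism $g$ fixing the fibre class $F$ preserves the complete linear system $|F|$ and hence carries fibres to fibres, inducing an automorphism $\bar\beta$ of the base with $\phi \circ g = \bar\beta \circ \phi$; conversely every such elliptic-surface automorphism fixes $F$. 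Since automorphisms defined over $\bar{k}$ remain automorphisms after base change to $\CC$, we have $\Aut(Y_{\bar{k}},F) \subseteq \Aut(Y_\CC,F) = \{\operatorname{id}\}$, so $(Y_{\bar{k}},\phi_{\bar{k}})$ has no nontrivial automorphisms as an elliptic surface.

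Next I would form the $k$-scheme $I := \operatorname{Isom}_k\big((Y',\phi'),(Y,\phi)\big)$ parametrising isomorphisms of elliptic surfaces (allowing a twist of the base). As $Y$, $Y'$ and the smooth conics $C$, $C'$ are projective over $k$, the functor $I$ is representable by a $k$-scheme of finite type; moreover $I$ is \'etale over $k$, since K3 surfaces carry no infinitesimal automorphisms ($H^0(Y,T_Y)=0$). Over $\bar{k}$, any two isomorphisms $(Y'_{\bar{k}},\phi'_{\bar{k}}) \to (Y_{\bar{k}},\phi_{\bar{k}})$ differ by an element of $\Aut(Y_{\bar{k}},F)=\{\operatorname{id}\}$, so $I_{\bar{k}}$ has at most one point; and by the defining property of a $\CC$-twisted form $I(\CC) \neq \emptyset$, so $I_{\bar{k}}$ is non-empty. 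Hence $I_{\bar{k}} \simeq \operatorname{Spec}\bar{k}$. An \'etale $k$-scheme with a single geometric point is $\operatorname{Spec} k$, so $I$ has a $k$-point: this is an isomorphism $(Y',\phi') \simeq (Y,\phi)$ of elliptic surfaces over $k$, and in particular $Y' \simeq Y$ as surfaces.

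The main obstacle is conceptual rather than computational: the extension $\CC/k$ is not algebraic, so the forms are not literally classified by a Galois cohomology set $H^1(\operatorname{Gal}(\CC/k),-)$, and one must descend the given $\CC$-isomorphism to $\bar{k}$. This is exactly what the finite-type Isom scheme accomplishes, a non-empty finite-type $\bar{k}$-scheme automatically having a $\bar{k}$-point; after this reduction the statement is the standard one that $\bar{k}$-twisted forms of $(Y,\phi)$ up to $k$-isomorphism are classified by the pointed set $H^1(\operatorname{Gal}(\bar{k}/k),\Aut(Y_{\bar{k}},F))$, which is trivial because its coefficient group is. The one point requiring care is the identification of $\Aut(Y_\CC,F)$ with the full elliptic-surface automorphism group, including base twists, as this is the group governing descent of the pair $(Y,\phi)$; the hypothesis $\Aut(Y_\CC,F)=\{\operatorname{id}\}$ then kills both the twisting of the total space and of the base, forcing $C' \simeq C$ over $k$ as well.
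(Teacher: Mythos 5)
Your proof is correct, and it rests on the same underlying mechanism as the paper's — Galois descent powered by the triviality of the geometric automorphism group $\Aut(Y_\CC,F)$ of the elliptic surface — but the packaging is genuinely different. The paper first reduces from $\CC$-twisted forms to $\ol{k}$-twisted forms by citing \cite[Lemma 16.27]{Mil08}, and then runs an explicit cocycle argument: for an isomorphism $g\colon Y_L\simeq Y'_L$ over a Galois extension $L/k$, the element $g\circ(\sigma g)^{-1}$ lies in $\Aut(Y_L,F)$, which injects into $\Aut(Y_\CC,F)=\{\operatorname{id}\}$ by \cite[Lemma 02VX]{stacks-project}, so $g$ is Galois-equivariant and descends by \cite[Proposition 16.9]{Mil08}. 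You instead work with the Isom scheme $I$ of the two elliptic surfaces: it is unramified, hence \'etale in characteristic zero, because $H^0(Y,T_Y)=0$; it acquires a $\ol{k}$-point from its $\CC$-point because it is locally of finite type; it has at most one geometric point because the automorphism group is trivial; and an \'etale $k$-scheme with a single geometric point is $\operatorname{Spec} k$. This buys you a uniform treatment of the transcendental extension $\CC/\ol{k}$ and the algebraic extension $\ol{k}/k$ in a single step, at the cost of invoking representability of the Isom functor for the pairs $(Y,\phi)$ (an open and closed subscheme of $\operatorname{Isom}(Y',Y)$ cut out by the condition on fibre classes), which you assert but do not justify. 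Two small points to make explicit: the inclusion $\Aut(Y_{\ol{k}},F)\subseteq\Aut(Y_\CC,F)$ is really the injectivity of the base-change map on automorphisms, for which the paper cites \cite[Lemma 02VX]{stacks-project}; and ``of finite type'' should be ``locally of finite type'' unless you separately bound the number of components of $I$ — though locally of finite type suffices for every step of your argument.
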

\begin{proof}
Any $\CC$-twisted form $(Y',\phi')$ of $(Y,\phi)$ is also a $\overline{k}$-twisted form of $(Y,\phi)$ \cite[Lemma 16.27]{Mil08}. Thus
it suffices to show that
for any Galois extension
$L/k$ all
$L$-twisted forms
of
$(Y, \phi)$
are isomorphic to $Y$.
Let $(Y',\phi')$ be an 
$L$-twisted form of $(Y,\phi)$, and let $g:Y_L\simeq Y'_L$ be an isomorphism 
of elliptic surfaces,
possibly twisting the base
by an automorphism.
Then for any $\sigma\in \operatorname{Gal}(L/k)$, the map $h\coloneqq g \circ (\sigma g)^{-1}$ 
is an automorphism of $Y_L$ 
as an elliptic surface.

Using injectivity of the map $\operatorname{Aut}(Y_L)\to \operatorname{Aut}(Y_\CC)$, c.f. \cite[Lemma 02VX]{stacks-project}, and the assumption about automorphisms of $Y_\CC$,
we see that
$h$ is the identity, that is $g$ commutes with the Galois action. 
Therefore $g$ 
descends to an isomorphism $Y \simeq Y'$ \cite[Proposition 16.9]{Mil08}. 
\end{proof}

\begin{lemma}\label{lem:elliptic-fibrations-k}
If $(X,f)$ is an elliptic K3 surface
over $k$ such that $\rho(X_{\CC}) = 2$, then all elliptic fibrations of $X_\CC$ are induced by elliptic fibrations of $X$.
\end{lemma}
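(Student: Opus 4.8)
The plan is to run a Galois descent argument, reducing the whole statement to the action of the absolute Galois group $G = \operatorname{Gal}(\bar{k}/k)$ on the rank-two lattice $\NS(X_{\bar{k}})$. Since $\rho(X_\CC)=2$, we have $\NS(X_\CC)=\NS(X_{\bar k})\simeq\Lambda_{d,t}$, and by Lemma~\ref{lem:twofibrations-numerical} the surface $X_\CC$ carries at most two elliptic fibrations, whose primitive nef isotropic fibre classes are $F$ (that of the given fibration $f$) and $F'$. Every elliptic fibration of $X_\CC$ is $\phi_{|F|}$ or $\phi_{|F'|}$ for one of these two classes, so the whole content is to descend the second fibration $g=\phi_{|F'|}$ to $k$ when it exists (i.e. when $d\not\equiv-1\pmod t$); for $f$ itself there is nothing to prove, as it is already defined over $k$.

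First I would show that $G$ acts trivially on $\NS(X_{\bar k})$. Each $\sigma\in G$ acts by a lattice isometry of $\Lambda_{d,t}$ that preserves the ample cone (Galois conjugates of ample, resp. effective, classes are again ample, resp. effective) and fixes $F$, because $F$ is the fibre class of the fibration $f$ which is defined over $k$. As recalled in the proof of Proposition~\ref{prop:autsequence}, the group $O^+(\Lambda_{d,t})$ of ample-cone-preserving isometries is either trivial or the order-two group swapping $F$ and $F'$, and its only element fixing $F$ is the identity. Hence $\sigma$ acts trivially, and in particular $F'$ is a $G$-invariant divisor class.

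It then remains to descend the morphism $g$. Because $F'$ is $G$-invariant, for each $\sigma$ the conjugate morphism is again associated with the pencil $|F'|$, so it differs from $g$ only by an automorphism $\tau_\sigma\in\Aut(\PP^1_{\bar k})$ of the target; the assignment $\sigma\mapsto\tau_\sigma$ is a $1$-cocycle with values in $\operatorname{PGL}_2(\bar k)$. Twisting the Galois action on the base by this cocycle makes $g$ equivariant, so it descends to an elliptic fibration $X\to C$ over $k$ whose base change recovers $g$, where $C$ is the smooth genus-zero curve (a conic) cut out by the cocycle. This is an elliptic fibration of $X$ in the sense of Section~\ref{sec:ellipticsurfaces}, and it induces the given fibration on $X_\CC$.

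The hard part will be this last descent step, and specifically the decision to descend the \emph{morphism} rather than the line bundle $\mathcal{O}(F')$ itself. The obstruction to $F'$ being represented by a genuine $k$-rational line bundle is a Brauer class $\delta(F')\in\Br(k)$ arising from the Hochschild--Serre sequence $0\to\Pic(X)\to\Pic(X_{\bar k})^{G}\to\Br(k)$, and this class need not vanish when $X$ has no $k$-point. The key observation resolving this is that $\delta(F')$ is precisely the class classifying the conic $C$: it does not obstruct the existence of the fibration over $k$, it only records whether the base is $\PP^1_k$ or a nontrivial conic. Either way, $X\to C$ is an elliptic fibration of $X$ over $k$ inducing the chosen fibration on $X_\CC$, which is exactly what the lemma asserts.
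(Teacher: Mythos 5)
Your proof is correct and follows essentially the same route as the paper: both arguments observe that the at most two fibre classes $F, F'$ cannot be permuted by the Galois group since $F$ is fixed (being the class of the given fibration over $k$), hence $F'$ is also Galois-invariant and the associated morphism descends to a fibration $X \to C$ over $k$ with $C$ a possibly nontrivial conic. The only difference is that the paper delegates the final descent of the morphism to a citation, whereas you carry it out explicitly via a $\operatorname{PGL}_2$-cocycle and correctly note that the Brauer obstruction to descending the line bundle is absorbed into the class of the conic $C$.
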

\begin{proof}
By Lemma \ref{lem:twofibrations-numerical} $X_\CC$
has one or two elliptic fibrations.
If there is only fibration, it must come from the given elliptic fibration $f$.
If there are two elliptic fibrations on $X_\CC$, they are defined over some Galois extension $L/k$.
Let $F$ and $F'$ be the corresponding 
divisor classes on $X_L$.
These classes can not be permuted by the Galois group, because one of them corresponds to $f$, hence is fixed by the Galois group.
Thus the other class is also fixed by the Galois group and the corresponding morphism $X \to C$ is defined over $k$, see e.g. \cite[Proposition 2.7, Theorem 3.4(2)]{Liedtke}.
\end{proof}

\begin{proposition}\label{prop:hassetttschinkelnonclosed}
    Let $X$ be an elliptic 
    K3 surface over $k$ with $\NS(X_\CC)\simeq \Lambda_{d,t}$.
    Assume $\Aut(X_\CC,F) = \left\{\operatorname{id}\right\}.$
    If $d$ and $t$ are coprime
    or have only one prime factor in common,
    then every Fourier-Mukai partner
    of $X$ is isomorphic, as a surface, to a coprime Jacobian
    of one of the elliptic fibrations on $X$.
\end{proposition}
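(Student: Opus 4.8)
The plan is to reduce to the complex classification of Corollary~\ref{cor:HT-DJ} and then descend the resulting complex isomorphism to $k$ by a twisted-forms argument based on Lemma~\ref{lem:notwistedforms}. First I would reinterpret the numerical hypothesis. Writing $m=\gcd(d,t)$, the common prime factors of $d$ and $t$ are exactly the primes dividing $m$, so the assumption that $d,t$ are coprime or share a single prime factor means $\omega(m)\le 1$, i.e. $m$ is $1$ or a prime power. Hence $X_\CC$ falls under case (i) or (ii) of Corollary~\ref{cor:HT-DJ}: every Fourier--Mukai partner of $X_\CC$ is isomorphic, as an elliptic surface, to a coprime Jacobian $\Jac^{a}(X_\CC,f_i)$ (with $\gcd(a,t)=1$) of one of the at most two elliptic fibrations $f_i$ of $X_\CC$.

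Next I would produce the reference surfaces over $k$. Since $\rho(X_\CC)=2$, Lemma~\ref{lem:elliptic-fibrations-k} shows that each elliptic fibration $f_i$ of $X_\CC$ is induced by an elliptic fibration of $X$ over $k$, so each coprime Jacobian $Z_i:=\Jac^{a}(X,f_i)$ is an elliptic K3 surface over $k$. Because $(Z_i)_\CC$ is a Fourier--Mukai partner of $X_\CC$, it has Picard rank $2$ and $T((Z_i)_\CC)\simeq T(X_\CC)$; comparing the kernels in \eqref{eq:eq-Aut-F} of Proposition~\ref{prop:autsequence} then gives $\Aut((Z_i)_\CC,F)\simeq\Aut(X_\CC,F)=\{\operatorname{id}\}$.

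Now let $Y$ be a Fourier--Mukai partner of $X$ over $k$. Assuming for the moment that $Y$ carries an elliptic fibration $\phi$ over $k$, Corollary~\ref{cor:HT-DJ} identifies $(Y_\CC,\phi_\CC)$ with some $(Z_i)_\CC$ as elliptic surfaces, so $(Y,\phi)$ is a $\CC$-twisted form of $(Z_i,\Jac^{a}(f_i))$. Applying Lemma~\ref{lem:notwistedforms} to $(Z_i,\Jac^{a}(f_i))$, which is legitimate precisely because $\Aut((Z_i)_\CC,F)=\{\operatorname{id}\}$, yields $Y\simeq Z_i$ as surfaces over $k$, i.e. $Y$ is a coprime Jacobian of an elliptic fibration of $X$. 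This is the entire proof modulo the parenthetical assumption.

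The main obstacle is exactly that assumption: that the geometric elliptic structure on $Y_\CC$ descends to $k$, equivalently that $\operatorname{Gal}(\overline{k}/k)$ does not interchange the two fibre classes of $Y_\CC$. This is where I expect the hypothesis that $Y$ is a Fourier--Mukai partner \emph{over $k$}, rather than merely geometrically, to be indispensable, since a fibration-swapping quadratic twist of a $Z_i$ is a geometric Fourier--Mukai partner that is not elliptic over $k$ and not a coprime Jacobian. The mechanism I would use is that a $k$-linear equivalence $\Db(X)\simeq\Db(Y)$ induces a Galois-equivariant Hodge isometry of transcendental lattices, hence a Galois-equivariant isometry $A_{T(X)}\simeq A_{T(Y)}$ carrying Lagrangian subgroups to Lagrangian subgroups. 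The $k$-rational fibration on $X$ makes its Lagrangian Galois-invariant, and because the involution $\iota$ of \eqref{eq:involution} is defined intrinsically it is Galois-equivariant; thus in case (ii) the two \emph{distinct} Lagrangians are both Galois-stable, and since the two fibre classes of $Y_\CC$ generate the two distinct Lagrangians, each fibre class is forced to be Galois-invariant, so $Y$ is elliptic over $k$. In case (i) the subcase $d\equiv-1\pmod{t}$ gives a unique fibration and is immediate; the delicate remaining subcase has two geometric fibrations generating a single Lagrangian, and there I would argue that $k$-linearity of the equivalence obstructs the fibration-swapping twist, so that $Y$ again acquires a $k$-rational fibration and the descent above applies.
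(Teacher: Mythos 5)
Your overall architecture is exactly the paper's: reduce to Corollary \ref{cor:HT-DJ}(i,ii) over $\CC$ via $\omega(\gcd(d,t))\le 1$, descend the geometric fibrations of $X_\CC$ to $k$ by Lemma \ref{lem:elliptic-fibrations-k}, identify $\Aut(\Jac^k(X_\CC),F)\simeq\Aut(X_\CC,F)=\{\operatorname{id}\}$ via Proposition \ref{prop:autsequence}, and finish with the twisted-forms Lemma \ref{lem:notwistedforms}. All of that is correct and is what the paper does.

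The genuine gap is the step you yourself flag as a ``parenthetical assumption'': that $Y$ carries an elliptic fibration $\phi$ defined over $k$. The paper does not prove this ad hoc; it invokes \cite[Proposition 16]{HT14}, which supplies a $k$-rational elliptic fibration on any Fourier--Mukai partner of an elliptic K3 surface over $k$. Your substitute argument is incomplete. The case-(ii) branch is plausible but under-justified: Galois-stability of each of the two distinct Lagrangian subgroups of $A_{T(Y)}$ does not by itself give Galois-invariance of the fibre classes in $\NS(Y_{\overline{k}})$ --- you still need that Galois preserves the nef cone (to exclude $F\mapsto -F$) and the dictionary \eqref{eq:lagrangianoffibration} between fibre classes and Lagrangians; these can be supplied but must be said. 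More seriously, in the remaining subcase of (i) (where $\gcd(d,t)=1$ and $d\not\equiv -1\pmod t$, so $Y_\CC$ has two fibrations generating a \emph{single} Lagrangian), your Lagrangian bookkeeping gives no constraint at all, and ``I would argue that $k$-linearity of the equivalence obstructs the fibration-swapping twist'' is a statement of intent, not a proof. Note also that in this subcase Galois swapping $F$ and $F'$ would not even contradict $Y$ having a $k$-rational fibration in principle --- the issue is to \emph{produce} a Galois-invariant nef isotropic class, and that is precisely the content of the cited result. Either import \cite[Proposition 16]{HT14} as the paper does, or give a complete argument for this existence statement; as written, the proof does not close.
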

\begin{proof}
Let $Y$ be a
Fourier-Mukai partner of $X$, and let $\phi:Y\to C$ be an elliptic fibration of $Y$, 
which
exists by \cite[Proposition 16]{HT14}.
By Corollary \ref{cor:HT-DJ}(i, ii),
$\phi_\CC: Y_\CC\to C_\CC$
is isomorphic to a coprime Jacobian $\Jac^k(X_\CC, f_\CC)$ as elliptic surfaces, for some elliptic fibration $f_\CC$ on $X_\CC$.
By Lemma \ref{lem:elliptic-fibrations-k}, $f_\CC$ comes from
an elliptic fibration $f$ on $X$, 
hence $(Y,\phi)$ is a $\CC$-twisted form of $\Jac^k(X, f)$.

From the description of the automorphism groups given in Proposition \ref{prop:autsequence}
we deduce that
\[
\Aut(\Jac^k(X_\CC), F) \simeq \Aut(X_\CC,F) 
\]
and by assumption this group is trivial.
It follows from Lemma \ref{lem:notwistedforms} that
$Y$ is isomorphic to $\Jac^k(X)$ as a surface.
\end{proof}

Proposition 
\ref{prop:hassetttschinkelnonclosed}
implies the following:
\begin{corollary}\label{cor:ratpointfmpartners}
Let $X$ be as in Proposition \ref{prop:hassetttschinkelnonclosed}. 
Let $Y$ be any
Fourier-Mukai partner of $X$.
Then $X$ has a $k$-rational point if and only if $Y$ has a $k$-rational point.
\end{corollary}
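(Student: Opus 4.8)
The plan is to combine Proposition \ref{prop:hassetttschinkelnonclosed} with the Lang--Nishimura theorem, as anticipated in the introduction. By Proposition \ref{prop:hassetttschinkelnonclosed}, under the hypotheses on $X$ every Fourier--Mukai partner $Y$ of $X$ is isomorphic \emph{as a $k$-surface} to a coprime Jacobian $\Jac^n(X,f)$, for some elliptic fibration $f$ on $X$ (defined over $k$ by Lemma \ref{lem:elliptic-fibrations-k}) and some integer $n$ with $\gcd(n,t)=1$. Since the conclusion only concerns the underlying $k$-surfaces, it suffices to show that $X$ and $\Jac^n(X,f)$ have a $k$-rational point simultaneously.

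To compare $X$ and $\Jac^n(X,f)$, I would exhibit rational maps between them, over $k$, in both directions, and apply Lang--Nishimura. Both are smooth projective K3 surfaces, so the theorem applies: any rational map from one to the other sends a $k$-point of the (smooth, proper) source to a $k$-point of the (proper) target. The rational maps themselves come from the generic fibre. Over the function field $K=k(\PP^1)$ the generic fibre $C$ of $f$ is a genus one curve, the generic fibre of $\Jac^n(X,f)$ is its degree $n$ Picard scheme $\operatorname{Pic}^n_{C/K}$, and the composite of the Abel isomorphism $C\xrightarrow{\sim}\operatorname{Pic}^1_{C/K}$ with the $n$-th power map $L\mapsto L^{\otimes n}$ is a $K$-morphism $C\to\operatorname{Pic}^n_{C/K}$. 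Spreading out over $\PP^1_k$, this yields a rational map $X\dashrightarrow\Jac^n(X,f)$ of $k$-varieties, whence $X(k)\neq\emptyset$ implies $\Jac^n(X,f)(k)\neq\emptyset$ (note that this direction needs no coprimality).

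For the reverse implication I would use $\gcd(n,t)=1$ to pick $m$ with $nm\equiv 1\pmod t$. The same construction gives a $k$-rational map $\Jac^n(X,f)\dashrightarrow\Jac^{nm}(X,f)$ (the $m$-th power map on generic fibres), and then I would identify $\Jac^{nm}(X,f)\simeq X$ over $k$ using Lemma \ref{lem:JkJl} and the standard isomorphisms \eqref{eq:standardjacobianisomorphisms} that reduce the superscript modulo $t$. Composing produces a $k$-rational map $\Jac^n(X,f)\dashrightarrow X$, and Lang--Nishimura then gives that $\Jac^n(X,f)(k)\neq\emptyset$ implies $X(k)\neq\emptyset$. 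Combining the two implications with $Y\simeq\Jac^n(X,f)$ yields $X(k)\neq\emptyset$ if and only if $Y(k)\neq\emptyset$.

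The main obstacle is arranging the reverse rational map \emph{over $k$}, rather than merely over $\CC$: the identification $\Jac^{nm}(X,f)\simeq X$ rests on the isomorphism $\Jac^{a+t}(X,f)\simeq\Jac^a(X,f)$, which over $K$ amounts to twisting by a degree-$t$ $K$-rational divisor class on $C$. I would verify that such a class is defined over $k$ --- equivalently, that $f$ admits a degree-$t$ multisection over $k$ --- using that the fibre class $F$, and hence both elliptic fibrations, are Galois-invariant (Lemma \ref{lem:elliptic-fibrations-k}) and that $\Jac^0(X,f)=S$ carries a section over $k$. Once this descent is in place, the remainder is the formal application of Lang--Nishimura recorded above.
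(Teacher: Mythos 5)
Your forward implication is exactly the paper's argument: by Proposition \ref{prop:hassetttschinkelnonclosed} one has $Y\simeq \Jac^n(X,f)$ as $k$-surfaces, the multiplication map $P\mapsto n\cdot P$ on generic fibres spreads out to a $k$-rational map $X\dashrightarrow \Jac^n(X,f)$, and Lang--Nishimura gives $X(k)\neq\emptyset\Rightarrow Y(k)\neq\emptyset$. Where you diverge is the converse, and there your route has a genuine unresolved gap. You want to invert by composing $\Jac^n(X,f)\dashrightarrow\Jac^{nm}(X,f)$ with an identification $\Jac^{nm}(X,f)\simeq X$ over $k$, where $nm\equiv 1\pmod t$. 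As you note, this identification requires a $K$-rational divisor class of degree divisible by $t$ on the generic fibre $C$ over $K=k(\PP^1)$ (equivalently, that the period of $C_K$ is $t$ and not a proper multiple of it). The ingredients you cite do not deliver this: Galois-invariance of the classes $F$ and $H$ in $\NS(X_{\bar k})$ does not produce a line bundle defined over $k$, since the cokernel of $\Pic(X)\to\NS(X_{\bar k})^{\operatorname{Gal}}$ is controlled by a Brauer obstruction in $\Br(k)$; the section of $S=\Jac^0(X,f)$ over $k$ gives a point of $S$, not a degree-$t$ class on $C$; and having a degree-$t$ rational divisor \emph{class} is not equivalent to having a degree-$t$ multisection (index versus period). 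What descent does give you for free is an ample class in $\Pic(X)$ over $k$, but its fibre degree is only some multiple $at$ of $t$, and $\gcd(n,t)=1$ does not guarantee $\gcd(n,at)=1$, so you cannot simply enlarge the modulus.

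The paper avoids this entirely by symmetry: since $Y$ satisfies the same hypotheses as $X$ (its N\'eron--Severi lattice is in the same genus and $\Aut(Y_\CC,F)\simeq\Aut(X_\CC,F)$ by Proposition \ref{prop:autsequence}), Proposition \ref{prop:hassetttschinkelnonclosed} applies to $Y$ and shows that $X$ is isomorphic over $k$ to a coprime Jacobian $\Jac^m(Y,\phi)$ of an elliptic fibration on $Y$ defined over $k$. One then runs your forward argument once more, with the roles of $X$ and $Y$ exchanged, obtaining a $k$-rational map $Y\dashrightarrow X$ and concluding by Lang--Nishimura. I recommend replacing your reverse construction with this observation; as written, the converse implication is not proved.
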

\begin{proof}
From Proposition \ref{prop:hassetttschinkelnonclosed}, it follows that there is an elliptic fibration $f:X\to C'$ and an integer $\ell\in \Z$ such that $Y\simeq \Jac^\ell(X,f)$ as surfaces. There is a rational map $X\dashrightarrow \Jac^\ell(X)\simeq Y$ given by $P\mapsto \ell\cdot P$. By the Lang-Nishimura Theorem \cite{Lan54}, \cite{Nis55}, it follows that $X(k)\neq \emptyset$ implies $Y(k)\neq \emptyset$. Conversely, since $X$ is also a coprime Jacobian of $Y$, the same argument shows that $Y(k)\neq\emptyset$ implies $X(k)\neq \emptyset$.
\end{proof}

\printbibliography
\end{document}